\newtheorem*{Proposition*}{Proposition}
\newcommand{\EE}{{\mathbf{E}}}
\newcommand{\dE}{\mathbb {E}}
\newcommand{\dP}{\mathbb{P}}
\newcommand{\cF}{\mathcal {F}}
\newcommand{\cE}{\mathcal {E}}
\newcommand{\cA}{\mathcal {A}}
\newcommand{\cG}{\mathcal {G}}
\newcommand{\cK}{\mathcal {K}}
\newcommand{\Poi}{ \mathrm{Poi}}
\DeclareMathOperator{\ov}{ov}
\newcommand{\II}{1\!\!{\sf I}}
\newcommand{\BEAS}{\begin{eqnarray*}}
\newcommand{\EEAS}{\end{eqnarray*}}
\newcommand{\BEA}{\begin{eqnarray}}
\newcommand{\EEA}{\end{eqnarray}}
\newcommand{\BEQ}{\begin{equation}}
\newcommand{\EEQ}{\end{equation}}
\newcommand{\BIT}{\begin{itemize}}
\newcommand{\EIT}{\end{itemize}}
\newcommand{\BNUM}{\begin{enumerate}}
\newcommand{\ENUM}{\end{enumerate}}
\title{Planting trees in graphs, and finding them back}
\begin{document}

\maketitle
\begin{abstract}
In this paper we study the two inference problems of detection and reconstruction in the context of planted structures in sparse Erd\H{o}s-Rényi random graphs $\mathcal G(n,\lambda/n)$ with fixed average degree $\lambda>0$. Motivated by a problem of communication security, we focus on the case where the planted structure consists in the addition of a tree graph. 

In the case of planted line graphs, we establish the following phase diagram for detection and reconstruction. In a low density region where the average degree $\lambda$ of the original graph is below some critical value $\lambda_c=1$, both detection and reconstruction go from  impossible to easy as the line length $K$ crosses some critical value $K^*=\ln(n)/\ln(1/\lambda)$, where $n$ is the number of nodes in the graph. In a high density region where $\lambda>\lambda_c$, detection goes from impossible to easy as $K$ goes from $o(\sqrt{n})$ to $\omega(\sqrt{n})$. In contrast, reconstruction remains impossible so long as $K=o(n)$. 

We then consider planted $D$-ary trees of varying depth $h$ and $2\le D\le O(1)$. For these we identify a low-density region $\lambda<\lambda_D$, where $\lambda_D$ is the threshold for emergence of the $D$-core in Erd\H{o}s-Rényi random graphs $\mathcal G(n,\lambda/n)$ for which the following holds. There is a threshold $h*=g(D)\ln(\ln(n))$ with the following properties. Detection goes from impossible to feasible as $h$ crosses $h*$. Interestingly, we show that only partial reconstruction is feasible at best for $h\ge h*$. We conjecture a similar picture to hold for $D$-ary trees as for lines in the high-density region $\lambda>\lambda_D$, but confirm only the following part of this picture: Detection is easy for $D$-ary trees of size $\omega(\sqrt{n})$, while at best only partial reconstruction is feasible for $D$-ary trees of any size $o(n)$.

These results provide a clear contrast with the corresponding picture for detection and reconstruction of {\em low rank} planted structures, such as dense subgraphs and block communities.  In the examples we study, there is i) an absence of hard phases for both detection and reconstruction, and ii) a discrepancy
between detection and reconstruction, the latter being impossible for a wide range of parameters where detection is easy. The latter property does not hold for previously studied low rank planted structures.

\end{abstract}
\newpage
\pagenumbering{arabic}
\section{Introduction}
This paper is concerned with the detection of additional structures planted in a graph initially without structure (such as an Erd\H{o}s-Rényi graph) and, in case such a structure is detected, with the reconstruction of the corresponding structure. We focus on planted structures that consist in a superimposed graph, and more specifically on superimposed trees. 

A first motivation for this focus stems from the following application scenario. Assume that the original graph without planted structure represents normal communications among agents, while the superimposed graph represents communications among a subset of {\em attackers} who, when active, connect directly among themselves to coordinate their activity. Detection then amounts to estimating whether an attack occurs, while reconstruction amounts to identifying the attackers in case of an attack. 

A second motivation is theoretical: previous work reviewed in Section \ref{sec:2} has shown that detection and reconstruction of planted structures in graphs displays rich and intriguing behaviour, with phases where the task is either impossible, computationally hard, or easy. It is important to understand what causes such phases, and whether phases for detection always coincide with the corresponding phase for reconstruction. Our present study sheds light on these questions, by showing that in the cases of planted tree structures we consider, no hard phase occurs, while feasibility phases of detection and reconstruction differ widely.  In contrast, the latter property does not hold for previously studied low rank planted structures.

More specifically, our contributions are as follows. In the particular case of planted line graphs, we determine the complete phase diagram for detection and reconstruction: In a low density region where the average degree $\lambda$ of the original graph is below some critical value $\lambda_c$, both detection and reconstruction go from  impossible to easy as the line length $K$ crosses some critical value $K^*=f(\lambda)\ln(n)$, where $n$ is the number of nodes in the graph. In a high density region where $\lambda>\lambda_c$, detection goes from impossible to easy as $K$ goes from $o(\sqrt{n})$ to $\omega(\sqrt{n})$. In contrast, reconstruction remains impossible so long as $K=o(n)$. 

We then consider the case of $D$-ary trees for fixed $D>1$, of height $h$. For these our results provide a similar picture with significant differences. Specifically, there exists a limit height $h_* = \ln\ln(D) + O(1)$ such that detection is impossible if $h < h_* - \ln(h_*)$, and easy for $h > h_* + \Omega(1) $. In that latter case, non-trivial reconstruction is feasible, but it must fail on a non-vanishing fraction of the $K$ attack nodes. In a high-density region $\lambda>\lambda_D$, we have again that detection is easy for $K=\omega(\sqrt{n})$, and that reconstruction must fail at least on a fraction of nodes. 

The paper is organized as follows. We review related work in Section \ref{sec:2}. We describe our model and main results in Section \ref{sec:3}. The proofs for planted lines and planted $D$-ary trees are in Sections \ref{sec:4} and \ref{sec:5} respectively, with detailed proofs of auxiliary results in the Appendix. 

\section{Related work}\label{sec:2}

Planted clique detection and reconstruction has been the object of many works, see e.g. \cite{dekel_gurel-gurevich_peres_2014}, \cite{Deshpande12}, \cite{barak16} for recent results and surveys. A central result in that context is that detection appears hard (i.e. no algorithm is known to succeed at detection in polynomial time) for cliques of size $o(\sqrt{n})$ planted in $G(n,1/2)$.  IT thresholds for planted dense subgraph detection are developed in \cite{verzelen2015}.

Computational hardness of planted clique is used  in reduction arguments to show that other planted structure detection problems are hard, eg sparse PCA \cite{berthet2013lowerSparsePCA_arxiv}, and dense subgraph detection \cite{HajekWuXu14}. The latter also displays IT-impossible phases, hard phases and easy phases. A systematic development of such reductions between problems with planted structure is initiated in \cite{pmlr-v75-brennan18a}. 

Community detection and reconstruction has also been thoroughly studied, the seminal article \cite{Decelle11} introducing several conjectures on feasibility of detection and reconstruction for the stochastic block model. Almost all conjectures in \cite{Decelle11} have been verified in subsequent works, in particular \cite{Mossel12}, \cite{Massoulie13}, \cite{Mossel13}, \cite{abbe16}. 

Presence of specific subgraphs in random graphs has been thoroughly studied, see e.g. \cite{janson11}. We leverage the corresponding techniques in our study of low density regions, for which detection feasibility corresponds to absence of copies of the planted graph structure in the original random graph. 

Most planted structures considered so far were typically of ``low rank'' (e.g. planted dense graph's expected adjacency matrix is, up to diagonal terms, a rank one perturbation); in contrast, adjacency matrices of trees and lines are not close to a low rank matrix.  One notable exception is the planted Hamiltonian cycle reconstruction addressed in \cite{DBLP:journals/corr/abs-1804-05436}. 

\section{Model and main results}\label{sec:3}

A total population of $n$ agents interconnects according to one of the following two modalities. Under the null hypothesis $H_0$ the interconnection does not display any specific structure. We assume that the corresponding graph $G$ is an Erd\H{o}s-Rényi $\cG(n,p)$ graph, with edge probability $p\in[0,1]$  taken equal to $\lambda/n$ for some fixed $\lambda>0$. We thus focus on sparse random graphs with average degree $O(1)$. 
Under the alternative hypothesis $H_1$, the graph $G$ is the union of a base graph $G_0$ distributed according to $\cG(n,p)$, with another graph $G'$ connecting a distinguished subset $\cK$ of nodes. Specifically, for a fixed graph $\Gamma$ on node set $[K]$ with edge set $\cE$, and an injective map $\sigma:[K]\to [n]$ chosen uniformly at random and independently of $G_0$, $G'$ consists of the nodes $\cK=\{\sigma(i),i\in[K]\}$ and edges $\{(\sigma(i),\sigma(j)),(i,j)\in\cE\}$. 

We shall mostly focus on tree graphs $\Gamma$, and more specifically on $D$-ary trees, i.e. trees with a distinguished root, or depth-0 node, and for each $\ell\in[h-1]$, $D^{\ell}$ depth-$\ell$ nodes being connected to one parent at depth $\ell-1$ and $D$ children at depth $\ell+1$. The two exteme cases are a line graph for $D=1$ and a star for $D=K-1$.

We are interested in answering, on the basis of an observed graph $G$, the following questions: 

{\bf Q1 (Detection)}: For a given planted graph shape $\Gamma$ (e.g. line, star, $D$-ary tree,$\ldots$), under what parameter regimes specified by $\lambda$ and $K$ is there a test that distinguishes $H_0$ from $H_1$ with error probabilities of both kinds going to zero as $n\to\infty$?
This is an information-theoretic property characterized by the likelihood ratio $\frac{\dP_1(G)}{\dP_0(G)}$, where $\dP_i$ denotes the distribution of $G$ uner $H_i$, $i=\{0,1\}$. Indeed
by the Neyman-Pearson lemma, among tests with given probability of correctly deciding $H_1$, there is one which minimizes probability of erroneously rejecting $H_0$ which decides $H_1$ if and only if the likelihood ratio  $L(G):=\frac{\dP_1(G)}{\dP_0(G)}$ is larger than some threshold $\tau$. We can ask the same question as Q1 when we restrict ourselves to tests that can be implemented in polynomial time. This then corresponds to a computational property. 

{\bf Q2 (Reconstruction)}: Can one reconstruct the planted structure $G'$, or at least a subset of its constituent nodes? Several metrics of reconstruction accuracy are possible. We shall focus on the following {\bf overlap} metric, which we now define for estimation procedures that produce a set  $\hat{\cK}$ of $K$ nodes in $[n]$, aimed to estimate at best the actual set $\cK$ of $K$ nodes involved in the attack. 
\begin{definition}
The {\bf overlap} of a set $\hat{\cK}$ estimating the actual ground truth $\cK$ is by definition the expected size of their intersection, i.e.
$$
\ov(\hat{\cK}):=\sum_{i\in[n]}\dP(i\in\hat{\cK}\cap\cK).
$$
We say that a particular reconstruction $\hat{\cK}$ of size $K$ {\bf fails} if $\ov(\hat{\cK})=o(K)$, {\bf succeeds} if $\ov(\hat{\cK})=K(1-o(1))$, and {\bf partially succeeds} if $\ov(\hat{\cK})=cK(1-o(1))$ for some $c\in(0,1)$.
\end{definition}
Reconstruction (respectively, partial reconstruction) is then deemed {\bf feasible} if there exists an  estimator $\hat{\cK}$ that is successful (respectively, partially successful). These properties are of an information-theoretic nature. Indeed the best possible overlap is achieved by the so-called Maximum a Posteriori (MAP) estimation procedure, and these properties are therefore determined by the overlap of the MAP estimator. One can, as for detection, consider a computational version of reconstruction: reconstruction (respectively, partial reconstruction) is {\bf easy} when it can be achieved by an estimator $\hat{\cK}$ that is efficiently computable. 

Before stating our results for planted lines and $D$-ary trees, we first consider planted star graphs, for which a simpler picture holds:
\begin{theorem}\label{thm:star}
For any fixed $\lambda>0$, a planted star of size $K=\ln(n)/\ln(\ln(n))[1-\omega(1/\ln(\ln(n)))]$ is not detectable, while both detection and reconstruction of a planted star of size $K=\ln(n)/\ln(\ln(n))[1+\omega(1/\ln(\ln(n)))]$ are easy.
\end{theorem}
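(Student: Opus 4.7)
The argument turns on analyzing the maximum degree $\Delta(G)$ of the base graph $\cG(n,\lambda/n)$, combined with the observation that the centre of a planted star is a vertex of degree at least $K-1$ in $G$. Let $K^\star(n)$ denote the largest integer with $n\,\dP(\Bin(n-1,\lambda/n)\ge K^\star)\to\infty$; Stirling's formula gives $K^\star(n)=(1+o(1))\ln n/\ln\ln n$, and $\Delta(\cG(n,\lambda/n))$ concentrates at $K^\star$. The two regimes of the theorem correspond to $K$ exceeding or lying below $K^\star$ by the stated margin.

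\textbf{Easy direction.} For $K\ge(\ln n/\ln\ln n)(1+\omega(1/\ln\ln n))$, consider the test that accepts $H_1$ iff $\Delta(G)\ge K-1$. Under $H_1$ it accepts deterministically since the planted centre has $K-1$ neighbours by construction, and under $H_0$ a union bound together with the Poisson-type tail estimate $\dP(\Bin(n-1,\lambda/n)\ge K-1)\le\bigl(e\lambda/(K-1)\bigr)^{K-1}$ gives vanishing type-I error. For reconstruction, output $\hat v:=\arg\max_v\deg_G(v)$ together with $K-1$ of its neighbours. Under $H_1$, $\hat v=\sigma(1)$ w.h.p.\ (all other degrees being below $K-1$), and its open neighbourhood has size $K-1+\Bin(n-K,\lambda/n)=K-1+O(1)$ w.h.p.; any choice of $K-1$ neighbours of $\hat v$ therefore recovers all but $O(1)$ true leaves, giving an overlap of $K-O(1)=K(1-o(1))$.

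\textbf{Impossibility of detection.} A direct computation shows that the likelihood ratio depends only on the degree sequence of $G$:
\[
L(G)\;=\;p^{-(K-1)}\dP_\sigma\bigl(\sigma(\Gamma)\subset G\bigr)\;=\;\frac{1+o(1)}{n\lambda^{K-1}}\sum_{v\in[n]}(d_v)_{K-1},
\]
where $(x)_k:=x(x-1)\cdots(x-k+1)$ and $d_v:=\deg_G(v)$. Writing $N_j:=|\{v:d_v=j\}|$ and using $\dE_0[N_j]\approx ne^{-\lambda}\lambda^j/j!$, one obtains the weighted decomposition
\[
L(G)\;=\;(1+o(1))\sum_{k\ge 0}\omega_k\,\frac{N_{K-1+k}}{\dE_0[N_{K-1+k}]},\qquad \omega_k:=\tfrac{e^{-\lambda}\lambda^k}{k!},\quad\sum_k\omega_k=1,
\]
so that $\dE_0[L]=1$. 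For $K\le(\ln n/\ln\ln n)(1-\omega(1/\ln\ln n))$, Stirling yields $\dE_0[N_{K-1+k}]\to\infty$ for every fixed $k$, and a Chebyshev bound --- justified by the near-independence of degrees of distinct vertices in the sparse regime --- gives $N_{K-1+k}/\dE_0[N_{K-1+k}]\to 1$ in $\dP_0$-probability. Truncating the $k$-sum at some slowly-growing $K_0(n)\to\infty$ and controlling the residual by Markov's inequality with the super-geometric weights $\omega_k$ then delivers $L(G)\to 1$ in $\dP_0$-probability.

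\textbf{From convergence in probability to total variation; main obstacle.} Since $L\ge 0$ and $\dE_0[L]=1$, the variable $(1-L)_+$ is bounded by $1$; bounded convergence applied to $L\to 1$ in probability yields $\dE_0[(1-L)_+]\to 0$, and because $\dE_0[L-1]=0$ forces $\dE_0[(L-1)_+]=\dE_0[(1-L)_+]$, one concludes $\DTV(\dP_0,\dP_1)=\tfrac12\dE_0[|L-1|]\to 0$, making detection impossible. The principal obstacle is that the standard second-moment method is unavailable here: pairs of planted stars sharing a common centre and many common leaves contribute factors of $(n/\lambda)^{|S\cap S'|}$ that overwhelm the rarity of such coincidences, so $\dE_0[L^2]$ diverges in the impossibility regime. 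One therefore bypasses variance control entirely and uses Scheffé-style $L^1$ convergence based on the automatic boundedness of $(1-L)_+$, which is the key technical twist of the argument.
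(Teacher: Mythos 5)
Your easy direction (max-degree test, highest-degree-node-plus-neighbours reconstruction) matches the paper's. For impossibility you take a genuinely different route: you express $L$ as a Poisson-weighted mixture of normalized degree counts $N_{K-1+k}/\dE_0[N_{K-1+k}]$, prove $L\to1$ in $\dP_0$-probability by Chebyshev, and then convert this to $L^1$ convergence via the Scheffé-style observation that $(1-L)_+\le1$ forces $\dE_0|L-1|=2\dE_0[(1-L)_+]\to0$. This is a valid and indeed more robust technique in principle, since it bypasses any need to control $\dE_0(L^2)$; the concentration of $N_j$ for $j$ of order $\ln n/\ln\ln n$ does hold because the pairwise covariance contribution to $\mathrm{Var}(N_j)$ is $O(j^2/n)\dE_0[N_j]^2=o(\dE_0[N_j]^2)$, so Chebyshev applies once $\dE_0[N_j]\to\infty$.

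However, your stated motivation for taking this detour contains a genuine factual error. You assert that "the standard second-moment method is unavailable here" and that "$\dE_0[L^2]$ diverges in the impossibility regime" because of same-centre overlaps. This is false: the paper's own proof shows $\dE_0(L^2)=1+o(1)$ precisely under $K=\frac{\ln n}{\ln\ln n}(1-\omega(1/\ln\ln n))$. The decomposition of $\dE_0(X\mid\Gamma_1\in G)$ into $M_1$ (stars at the planted centre), $M_2$ (stars at a planted leaf) and $M_3$ (stars elsewhere) gives $M_1\le(1+\lambda)^K$, and $(1+\lambda)^K K!/(n\lambda^K)\to0$ because $\ln n - K\ln K=\omega(\ln n/\ln\ln n)$ dominates all $O(K)$ corrections. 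The combinatorial rarity of a vertex having degree much above $K$ exactly cancels the $(n/\lambda)^{|S\cap S'|}$ factors you worried about, leaving a sum whose dominant term is of order $1/\dE_0(X)=o(1)$. So both routes succeed; the paper's is a shorter second-moment computation, while yours trades a variance bound for a concentration-of-$N_j$ argument plus Scheffé, and would additionally survive in hypothetical settings where $\dE_0(L^2)$ did blow up — but that is not the situation here, and the claim that it is should be dropped.
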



\begin{table}[htb]
    \centering
    \def\arraystretch{1.2}
    \subfigure[Subcritical regime : $\lambda < 1$]{%
        \begin{tabular}{|c|c|}
            \hline
            $K < \ln(n)/\ln(1/\lambda)$ & $\ln(n)/\ln(1/\lambda) < K \ll n/\ln(n)$ \\
            \hline
            \makecell{Detection \& reconstruction \\ IT impossible} & \makecell{Detection \& reconstruction \\ easy} \\
            \hline
        \end{tabular}%
    }
    
    \vspace{1em}
    
    \subfigure[Supercritical regime : $\lambda > 1$]{%
        \begin{tabular}{|c|c|}
            \hline
            $K \ll \sqrt{n}$ & $\sqrt{n} \ll K \ll n$ \\
            \hline
            \makecell{Detection \& reconstruction \\ IT impossible} & \makecell{Detection easy \\ reconstruction IT impossible} \\
            \hline
        \end{tabular}%
    }
    \caption{Summary of results for planted line graph}
    \label{tab1}
\end{table}
The result for line graphs, summarized in Table \ref{tab1}, is
\begin{theorem}[Line graphs]
In the low-density region $\lambda<\lambda_c=1$, detection and reconstruction are impossible if $K=\ln(n)/\ln(1/\lambda)-\omega(\ln(\ln(n)))$, while both detection and reconstruction are easy if $K=\ln(n)/\ln(1/\lambda)+\omega(1)$ and $K=o(n/\ln(n))$.

In the high-density region $\lambda>\lambda_c=1$, detection and reconstruction are impossible if $K=o(\sqrt{n})$, detection is easy if $K=\omega(\sqrt{n})$, while reconstruction is impossible for $K=o(n)$. 
\end{theorem}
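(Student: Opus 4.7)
Here the proof revolves around $P_K(G)$, the number of length-$K$ paths in $G$. A direct first-moment computation gives $\EE_0[P_K(G)] \sim \tfrac12 n\lambda^{K-1}$, whose logarithm crosses zero exactly at $K^*=\ln n/\ln(1/\lambda)$. Unpacking definitions, the likelihood ratio is
\[
L(G)=\frac{2\,p^{-(K-1)}P_K(G)}{n!/(n-K)!},\qquad \EE_0[L(G)^2]=\frac{\EE_0[P_K(G)^2]}{\EE_0[P_K(G)]^2}.
\]
For impossibility at $K=K^*-\omega(\ln\ln n)$ I would run the standard subgraph-count second-moment computation: decompose $\EE_0[P_K(G)^2]$ by the intersection pattern of two length-$K$ paths (a disjoint union of shared subpaths), and verify that overlap contributions are negligible against $\EE_0[P_K(G)]^2$. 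The $\omega(\ln\ln n)$ slack ensures $\EE_0[P_K(G)]$ is at least polylogarithmically large, which is what absorbs the overlap corrections. For feasibility at $K=K^*+\omega(1)$ with $K=o(n/\ln n)$, Markov gives $P_K(G)=0$ \whp\ under $H_0$, so the test ``does $G$ contain a length-$K$ path?'' distinguishes the hypotheses; it is polynomial because in the subcritical regime the components of $G_0$ are trees of size $O(\ln n)$, and under $H_1$ the component containing the planted line is a length-$K$ path with $O(\ln n)$-size tree attachments, inside which a longest path is linear-time to find. For reconstruction I additionally need this length-$K$ path to be unique under $H_1$: a first-moment count of length-$K$ paths in $G_0\cup\text{planted}$ that use at least one $G_0$-edge vanishes provided $K=o(n/\ln n)$, which precisely rules out shortcutting through attached $G_0$-trees.

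\textbf{High-density detection.} For easy detection at $K=\omega(\sqrt n)$ I threshold the edge count $|E(G)|$: under $H_0$ it is $\Bin(\binom{n}{2},p)$ with mean $\sim\lambda n/2$ and variance $O(n)$, whereas planting shifts the mean by $(K-1)(1-p)\sim K$, a divergent number of standard deviations, so Chebyshev yields a consistent test. For impossibility at $K=o(\sqrt n)$ I again bound $\EE_0[L(G)^2]=\EE_0[P_K(G)^2]/\EE_0[P_K(G)]^2$. In the supercritical regime $\EE_0[P_K(G)]$ is exponentially large in $K$ and the dominant contribution to the variance comes from pairs of length-$K$ paths sharing a single common edge; a careful tally of such pairs produces a ratio of order $K^2/n$, which is $o(1)$ exactly when $K=o(\sqrt n)$, so $\EE_0[L(G)^2]=1+o(1)$ as required.

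\textbf{High-density reconstruction impossibility and main obstacle.} For $K=o(n)$, I would argue that the supercritical graph $\cG(n,\lambda/n)$ is locally rich enough to hide the planted path up to many indistinguishable relabelings. Specifically, I would exhibit a large family of alternative labelings $\sigma'$ obtained from the planted $\sigma$ by swapping an interior vertex $\sigma(i)$ with another common neighbor $v\ne\sigma(i)$ of $\sigma(i-1)$ and $\sigma(i+1)$ in $G$. A local branching-process analysis of supercritical neighborhoods shows that for a positive fraction of indices $i$ such a $v$ exists, because typical pairs of distance-$2$ vertices in the giant component already share a constant expected number of common neighbors. Each such swap yields $\dP_1(G\mid\sigma')=\dP_1(G\mid\sigma)$, so no estimator can concentrate on $\sigma$ beyond a fraction strictly less than $1$ per coordinate, which forces $\ov(\hat{\cK})/K$ away from $1$. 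The hardest step I foresee is precisely this upgrade from existence of swap candidates to a quantitative constant-fraction statement, alongside the supercritical second-moment computation where the heavy-tailed nature of $P_K(G)$ makes the bookkeeping of multi-segment path overlaps rather delicate.
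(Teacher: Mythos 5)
Your detection pieces are broadly aligned with the paper: the first/second-moment control of the number of $K$-paths for impossibility, the ``is there a length-$K$ path'' test in the sparse regime, and a linear statistic for $K=\omega(\sqrt n)$ in the dense regime (the paper uses counts of small connected components rather than the total edge count so as to work for unknown $\lambda$, and organizes $\dE_0[L^2]$ via a Markov-chain coupling rather than an explicit overlap-pattern tally, but these are close in spirit). On the reconstruction side, however, there are two genuine gaps, plus an omission.

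First, in the low-density regime your argument hinges on the length-$K$ path in $G$ being unique under $H_1$. This is false even at $K=o(\sqrt n)$: dropping one planted endpoint and appending a $G_0$-neighbour of the other endpoint gives a length-$K$ path that uses one $G_0$-edge, and there are $\Theta(n)$ candidate external neighbours each present with probability $\lambda/n$, so the expected number of such paths is $\Theta(1)$, not $o(1)$. For $K\gg\sqrt{n/\ln n}$ the component of the planted path also acquires cycles w.h.p., so ``find the longest path'' is not even an obviously polynomial-time subroutine. The paper replaces your uniqueness argument with an iterated degree-one \emph{peeling} of the component containing the longest path (Lemma~\ref{lemma:reconstr_lines_subcritical}): the peeled set both has size $K\pm o(K)$ and agrees with the planted path up to $o(K)$ nodes, which is enough for overlap $K-o(K)$ without any uniqueness claim.

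Second, in the high-density regime for $\sqrt n\ll K\ll n$, your single-vertex swap construction cannot give the conclusion the theorem demands. Each swap produces an alternative path that agrees with the planted one on all but $O(1)$ coordinates, so the estimator that simply outputs the $(1-\delta)K$ coordinates unaffected by swaps already achieves overlap $(1-\delta)K$; your mechanism therefore saturates at $\ov(\hat\cK)\le(1-\delta)K$ for some constant $\delta$, whereas the theorem (with the paper's definition of failure) requires $\ov(\hat\cK)=o(K)$. The paper's construction replaces whole length-$D$ blocks of the planted path, with $D=\omega(\ln n)$, by $\tau$ pairwise almost-disjoint detours, producing $\tau$ lure $K$-paths sharing only a vanishing glue with the planted one; exchangeability then forces $\ov(\cK^*)\le K/(\tau+1)+o(K)$ for every fixed $\tau$, hence $o(K)$. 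Incidentally, the $(1-\delta)K$ bound that your swaps can prove is exactly what the paper establishes for planted $D$-ary trees (Theorem~\ref{thm:dary_norec}) — for lines one needs the stronger construction.

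Third, in both impossibility windows you compute $\dE_0[L^2]\to 1$ but never explain how this forbids reconstruction as opposed to detection. As the paper notes (citing Banks--Moore), this implication is not automatic; the paper proves a separate statement, Theorem~\ref{thm:reconstruction_fails}, which deduces $\ov(\hat\cK)=o(K)$ from $K=o(\sqrt n)$, $\dE_0[X_\Gamma]=\omega(1)$, and $\dE_0[L^2]=1+o(1)$. Your proposal needs such a bridge for the claims ``reconstruction impossible'' below $K^*$ and below $\sqrt n$.
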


\begin{table}[htb]
    \centering
    \def\arraystretch{1.2}
    \subfigure[Subcritical regime : $\lambda < \lambda_D$]{%
        \begin{tabular}{|c|c|}
            \hline
            $h < \log_D\log_D(n)$ & $\log_D\log_D(n) < h < \log_D(n)$ \\
            \hline
            \makecell{Detection \& reconstruction \\ IT impossible} & \makecell{Detection easy \\ complete reconstruction impossible} \\
            \hline
        \end{tabular}%
    }
    
    \vspace{1em}
    
    \subfigure[Supercritical regime : $\lambda > \lambda_D$]{%
        \begin{tabular}{|c|c|}
            \hline
            $ h < \log_D(n)/2 $ & $\log_D(n)/2 < h < \log_D(n)$ \\
            \hline
            \makecell{Detection unknown \\ complete reconstruction impossible} & \makecell{Detection easy \\ complete reconstruction impossible} \\
            \hline
        \end{tabular}%
    }
    \caption{Summary of results for planted $D$-ary tree}
    \label{tab2}
\end{table}

For $D$-ary trees, the results are similar. However the critical parameter $\lambda_D$ defined in \eqref{eq:def_lambda_D} is the threshold for emergence of the $D$-core (see \cite{Moore:2011:NC:2086753}), and only partial reconstruction is possible in the subcritical regime $\lambda<\lambda_D$. We consider $D$-ary trees $\Gamma$ of depth $h$ with corresponding size $K=\frac{D^{h+1}-1}{D-1}$ ; the main results (in terms of $h$) are summarized in Table~\ref{tab2}.

\begin{theorem}[D-ary trees]
 In the low-density region $\lambda < \lambda_D$, there exist two parameters $\underline h$ and $\bar h$ such that the following holds.

$\bar h = \ln\ln(n)/\ln(D) + \Theta(1)$, and $\underline h = \bar h - 1$ for almost all $\lambda$.
    
When $h \leq \underline h - O(\ln(\underline h))$, both detection and reconstruction are impossible with high probability.

Detection is easy whenever $h \geq \bar h + O(1)$.

\noindent For any $\lambda>0$, hence in both low-density and high-density regions, detection is easy whenever $K = \omega(\sqrt{n})$ while complete reconstruction is impossible for $K = o(n)$.
\end{theorem}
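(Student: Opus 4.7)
The theorem bundles four claims: in the subcritical regime, easy detection for $h\ge\bar h+O(1)$ and impossibility for $h\le\underline h-O(\ln\underline h)$; and for arbitrary $\lambda$, easy detection when $K=\omega(\sqrt n)$ but impossibility of complete reconstruction when $K=o(n)$. My plan is to prove each by a dedicated moment argument. For easy detection in the subcritical regime when $h\ge\bar h+O(1)$, I would use the subgraph-containment test: accept $H_1$ iff $G$ contains a copy of the $D$-ary tree $T$ of depth $h$. Under $H_1$ the planted copy is always present; under $H_0$ the first-moment count $\EE_0[N_T(G)]\approx n\lambda^{K-1}/|\mathrm{Aut}(T)|=n\lambda^{K-1}/(D!)^{(D^h-1)/(D-1)}$ tends to $0$, so Markov's inequality closes the argument. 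Inverting this identity for $K=(D^{h+1}-1)/(D-1)$ and fixed $\lambda<\lambda_D<1$ recovers the threshold $\bar h=\ln\ln(n)/\ln(D)+\Theta(1)$. For easy detection at arbitrary $\lambda$ when $K=\omega(\sqrt n)$, the edge-count test suffices: under $H_0$ the number of edges is $\mathrm{Bin}(\binom{n}{2},\lambda/n)$ with standard deviation $O(\sqrt n)$, while under $H_1$ the tree contributes $K-1$ extra edges (only a $\lambda/n$ fraction coinciding with $G_0$), shifting the mean by $(1-o(1))(K-1)\gg\sqrt n$.

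For the matching detection lower bound in the subcritical regime I would carry out a second-moment analysis of the likelihood ratio. A direct expansion gives
\[
L(G)=\frac{\PP_1(G)}{\PP_0(G)}=\frac{|\mathrm{Aut}(T)|\,N_T(G)}{(n!/(n-K)!)\,p^{K-1}},
\]
so that $\EE_0[L^2]=\EE_0[N_T(G)^2]/\EE_0[N_T(G)]^2$, one plus the squared coefficient of variation of $N_T$. Grouping pairs of copies of $T$ in $K_n$ by their intersection graph $F$ (necessarily a subforest of $T$), the contribution of each non-empty $F$ to $\EE_0[L^2]-1$ is of order $\lambda^{-e(F)}n^{-c(F)}$ up to combinatorial factors counting how $F$ fits inside $T$. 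The condition $\lambda<\lambda_D$ prevents denser-than-average intersection subforests from dominating the sum, and the slack $h\le\underline h-O(\ln\underline h)$ ensures that each contribution is $o(1)$ uniformly in $F$. Summing yields $\EE_0[L^2]=1+o(1)$, hence by Cauchy--Schwarz $\DTV(\PP_0,\PP_1)\to 0$, ruling out both detection and, a fortiori, reconstruction.

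For reconstruction impossibility when $K=o(n)$ I would use a leaf-exchangeability argument. A short Bayes calculation shows that, conditionally on $G$, the posterior over the planted embedding $\sigma:[K]\to[n]$ is uniform over all labelled embeddings of $T$ into $G$ (since all such embeddings force the same $K-1$ extra edges to be present, their likelihoods are equal). Fix the embedding of every non-leaf of $T$ and look at any depth-$(h-1)$ parent $v$: the $D$ planted children at $v$ are a uniformly random $D$-subset of $v$'s remaining neighbors in $G$. Since $K=o(n)$, the excess degree of $v$ beyond its $D$ tree children is asymptotically $\mathrm{Poi}(\lambda)$, so by symmetry the MAP estimator identifies any fixed planted leaf with probability at most $\EE[D/(D+\mathrm{Poi}(\lambda))]<1$. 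Summing over the $D^h=\Theta(K)$ leaves gives $\ov(\hat{\cK}_{\mathrm{MAP}})\le(1-\Omega(1))K$, precluding complete reconstruction.

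The principal obstacle is the second-moment computation: pinning down the sharp gap $\underline h=\bar h-1$ for generic $\lambda$ requires uniform control over the $2^{\Theta(K)}$ possible intersection subforests $F$ of $T$, and the bookkeeping must interact correctly with the $D$-core threshold. The remaining components (first-moment subgraph containment, edge-count test, leaf exchangeability) should follow from comparatively standard techniques.
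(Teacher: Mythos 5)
Your plan matches the paper's overall strategy for three of the four parts, but there is a fatal gap in the easy-detection step for the subcritical regime.

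\paragraph{Easy detection for $h\ge \bar h + O(1)$: the first-moment argument fails.}
You write ``fixed $\lambda<\lambda_D<1$'', but this is incorrect: for $D\ge 2$ the threshold $\lambda_D$ defined by \eqref{eq:def_lambda_D} is always $>1$ (indeed $\lambda_2\approx 3.35$). Consequently the first moment you propose to use does \emph{not} vanish. Concretely, with $m=(D^h-1)/(D-1)$ internal nodes one has $|\mathrm{Aut}(\Gamma)|=(D!)^m$, $K-1=Dm$, and hence
\[
\dE_0[X_\Gamma]\approx n\,\lambda^{K-1}/(D!)^m = n\Bigl(\lambda^D/D!\Bigr)^m.
\]
This tends to $0$ only if $\lambda<(D!)^{1/D}$. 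For $D=2$ that is $\lambda<\sqrt 2$, while $\lambda_2\approx 3.35$, so for $\lambda\in(\sqrt 2,\lambda_2)$ the first moment blows up doubly exponentially in $h$ even though $X_\Gamma=0$ w.h.p. Markov's inequality therefore gives nothing. The paper instead proves absence of copies via Lemma \ref{lem:coupling} (coupling the local neighbourhood of each vertex with a Galton--Watson tree with offspring $\Poi(\lambda')$, $\lambda'>\lambda$) together with Theorem \ref{thm:gw_height}, which establishes that the GW probability $p_h$ decays like $\varepsilon^{D^h}$. That doubly-exponential decay of the \emph{per-vertex} presence probability, not a global expectation bound, is what gives $X_\Gamma=0$ w.h.p.\ and fixes the threshold $\bar h$. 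This is a genuinely different and necessary idea; your inversion of the first moment would also produce the wrong threshold constant when $\lambda<(D!)^{1/D}$.

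\paragraph{Impossibility of detection (second moment).}
Your second-moment sketch is in the right direction and coincides with the paper's proof of Lemma \ref{lem:likelihood_dary}, which decomposes $\dE_0[X_\Gamma^2]=\dE'+\dE''$ by whether the two copies have disjoint vertex sets and bounds $\dE''$ by grouping over $|\Gamma_i\cup\Gamma_j|$. One correction: the subcriticality $\lambda<\lambda_D$ does not enter the second-moment computation itself (the paper's bound $\dE''=O(\dE_0[X_\Gamma]^2\,K^K/(n\lambda^K))$ holds for any $\lambda$ and requires only $K\le\ln n/\ln\ln n$). Subcriticality is used earlier, in Theorem \ref{thm:gw_height}, to translate the bound on $K$ into the stated bound on $h$ via $\underline h$.

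\paragraph{Detection for $K=\omega(\sqrt n)$ and reconstruction impossibility for $K=o(n)$.}
The edge-count test is fine when $\lambda$ is known and is exactly the simplification noted in the remark after Theorem \ref{thm:easy_detect}; the paper's test via small-component counts additionally handles unknown $\lambda$. Your leaf-exchangeability argument matches Theorem \ref{thm:dary_norec} and Lemma \ref{hypergeo} in spirit: conditioning on the internal nodes, the posterior over each depth-$h$ sibling set is exchangeable over a $\sim D+\Poi(\lambda)$-size neighbourhood, forcing a constant-fraction loss on the $\Theta(K)$ leaves. The paper handles the case $|\hat\cK\cap\mathcal T|\le(1-\varepsilon)|\mathcal T|$ separately to make the bound airtight, a bookkeeping step you would also need, but the idea is the same.
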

\section{Preliminary results}

We now state three results that hold for arbitrary planted structures, and that will be used extensively. The first is a characterization of the likelihood ratio $\frac{\dP_1}{\dP_0}$:
\begin{lemma}\label{lem:likelihood_ratio}
The likelihood ratio $L(G)=\frac{\dP_1(G)}{\dP_0(G)}$ is given by $L(G)=\frac{X_{\Gamma}}{\dE_0(X_{\Gamma})}$, where $X_{\Gamma}$ denotes the number of copies of $\Gamma$ in $G$.
\end{lemma}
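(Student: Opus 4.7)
My plan is to compute $\dP_1(G)$ by conditioning on the random injective embedding $\sigma:[K]\to[n]$, and then to recognize the resulting normalization as $\dE_0[X_\Gamma]$.

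First, I would write
\[
\dP_1(G) \;=\; \sum_{\sigma} \dP(\sigma)\, \dP_1\bigl(G \mid \sigma\bigr),
\]
where the sum is over the $n(n-1)\cdots(n-K+1)$ injective maps, each with probability $1/[n(n-1)\cdots(n-K+1)]$. Given $\sigma$, the observed graph is $G = G_0 \cup G'(\sigma)$ with $G_0\sim\cG(n,p)$, so for $G'(\sigma)$ to match, we need (i) every edge of $G'(\sigma)$ to be present in $G$, and (ii) the status of all edges outside $G'(\sigma)$ to be determined by $G_0$ alone. Writing $m = |\cE|$ for the number of edges of $\Gamma$ and $N = \binom{n}{2}$, this gives
\[
\dP_1(G\mid \sigma) \;=\; p^{e(G)-m}(1-p)^{N-e(G)}\,\II\{G'(\sigma)\subseteq G\} \;=\; \frac{\dP_0(G)}{p^m}\,\II\{G'(\sigma)\subseteq G\},
\]
where I used $\dP_0(G) = p^{e(G)}(1-p)^{N-e(G)}$.

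Next, I would sum over $\sigma$. The key observation is that
\[
\#\{\sigma : G'(\sigma)\subseteq G\} \;=\; |\mathrm{Aut}(\Gamma)|\cdot X_\Gamma,
\]
since every unlabeled copy of $\Gamma$ in $G$ corresponds to exactly $|\mathrm{Aut}(\Gamma)|$ injective embeddings $\sigma$. Combining the displays,
\[
\dP_1(G) \;=\; \frac{\dP_0(G)}{p^m}\cdot \frac{|\mathrm{Aut}(\Gamma)|\,X_\Gamma}{n(n-1)\cdots(n-K+1)}.
\]
Dividing by $\dP_0(G)$, the likelihood ratio equals $X_\Gamma / C$ with $C := p^m\,n(n-1)\cdots(n-K+1)/|\mathrm{Aut}(\Gamma)|$.

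Finally, I would verify that $C = \dE_0[X_\Gamma]$. Each of the $n(n-1)\cdots(n-K+1)/|\mathrm{Aut}(\Gamma)|$ possible unlabeled placements of $\Gamma$ in $[n]$ appears as a subgraph of $G_0$ with probability exactly $p^m$, since $\Gamma$ has $m$ edges and these are independent in $\cG(n,p)$; by linearity of expectation, $\dE_0[X_\Gamma] = C$, which closes the argument.

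The only subtle point is the counting convention: one must be explicit about whether $X_\Gamma$ counts labeled or unlabeled copies of $\Gamma$, because the factor $|\mathrm{Aut}(\Gamma)|$ has to cancel consistently between the numerator (the sum over $\sigma$) and the denominator ($\dE_0[X_\Gamma]$). Once the conventions are fixed, the result is a direct calculation.
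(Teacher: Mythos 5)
Your proof is correct and follows essentially the same Bayes'-formula decomposition as the paper: the paper conditions on which of the $m=\binom{n}{K}\frac{K!}{|\mathrm{Aut}(\Gamma)|}$ unlabeled copies $\Gamma_i$ of $\Gamma$ in $K_n$ was planted, while you condition on the injective map $\sigma$ and then quotient by $|\mathrm{Aut}(\Gamma)|$, which is the same count in a labeled vs.\ unlabeled bookkeeping. Your handling of the overlap between edges of $G'(\sigma)$ and edges of $G_0$ (i.e.\ that only the status of edges outside $G'(\sigma)$ matters) is the right observation, and your final identification of the normalization with $\dE_0[X_\Gamma]$ matches the paper's.
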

The second gives a generic detection process that succeeds for $K$ large enough, and all planted graph structures $\Gamma$ that are connected.
\begin{theorem}\label{thm:easy_detect}
  Assume that $\lambda>0$, $K=\omega(\sqrt{n})$, and the hidden graph is any connected subgraph on $K$ nodes, not necessarily a line.
  Then the total variation distance $|\dP_1-\dP_0|_{var}$ between $\dP_0$ and $\dP_1$ goes to 1 as $n\to \infty$. 
  
  Let $A_i$, $i\in\{1,2,3\}$ denote the number of size $i$-connected components in $G$, $\hat{\lambda}=(n A_3)/(A_1 A_2)$, and $\hat{k}=n-e^{\hat{\lambda}}A_1$. The test that decides $H_1$ if $\hat{k}\ge t_n:=\sqrt{K\sqrt{n}}$, and $H_0$ otherwise is polynomial-time computable and distinguishes with high probability graphs sampled from $\dP_1$ or $\dP_0$.
\end{theorem}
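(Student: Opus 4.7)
My plan is to analyze the proposed test directly: once I show that both its type-I and type-II error probabilities vanish as $n\to\infty$, polynomial-time computability is immediate (components of $G$ are enumerated in linear time, after which $A_1,A_2,A_3$ follow by a linear scan), and the total-variation claim $|\dP_1-\dP_0|_{\mathrm{var}}\to 1$ follows from the standard bound $|\dP_1-\dP_0|_{\mathrm{var}}\ge 1-\dP_0(\hat k\ge t_n)-\dP_1(\hat k<t_n)$. Everything therefore reduces to two concentration statements: $\hat k=O_P(\sqrt n)$ under $\dP_0$, and $\hat k=K(1+o_P(1))$ under $\dP_1$.

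For $\dP_0=\cG(n,\lambda/n)$, standard first-moment computations give $\dE_0[A_1]=ne^{-\lambda}(1+o(1))$, $\dE_0[A_2]=\tfrac{n\lambda}{2}e^{-2\lambda}(1+o(1))$, and $\dE_0[A_3]=\tfrac{n\lambda^2}{2}e^{-3\lambda}(1+o(1))$; the triangle contribution to $A_3$ is only $O(1)$ and hence negligible against the paths on $3$ vertices. A second-moment calculation, exploiting that the indicator ``vertex $v$ lies in a component of size $i$'' depends only on a local $O(i)$-neighborhood of $v$, yields $\VAR_0[A_i]=O(n)$ for $i\in\{1,2,3\}$, so that $A_i=\dE_0[A_i](1+O_P(n^{-1/2}))$. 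Substituting into $\hat\lambda=nA_3/(A_1A_2)$, the leading terms cancel and $\hat\lambda=\lambda+O_P(n^{-1/2})$; a first-order expansion then gives $e^{\hat\lambda}A_1=n+O_P(\sqrt n)$ and hence $\hat k=O_P(\sqrt n)$. Since $K=\omega(\sqrt n)$ implies $t_n=\sqrt{K\sqrt n}\gg\sqrt n$, this yields $\dP_0(\hat k\ge t_n)\to 0$.

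Under $\dP_1$, the hypothesis that $\Gamma$ is connected on $K\ge 4$ vertices forces the $K$ planted nodes to lie in a single connected component of size $\ge K$, so none of them contributes to $A_i$ for $i\in\{1,2,3\}$; these counts are supported entirely on configurations within $[n]\setminus\cK$ whose vertices carry no $G_0$-edges to $\cK$. Redoing the first-moment computation yields $\dE_1[A_1]=(n-K)e^{-\lambda}(1+o(1))$, $\dE_1[A_2]\sim\tfrac{(n-K)^2\lambda}{2n}e^{-2\lambda}$, and $\dE_1[A_3]\sim\tfrac{(n-K)^3\lambda^2}{2n^2}e^{-3\lambda}$; the $(n-K)^i/n^{i-1}$ prefactors combine so that $\hat\lambda=\lambda+O_P(n^{-1/2})$ again, while $e^{\hat\lambda}A_1=(n-K)+O_P(\sqrt n)$, and hence $\hat k=K+O_P(\sqrt n)$. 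Since $K\gg t_n$, this gives $\dP_1(\hat k<t_n)\to 0$.

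The main technical obstacle will be the variance bounds $\VAR[A_i]=O(n)$, particularly under $\dP_1$ where the planted edges introduce dependencies among the small-component indicators. The argument mirrors the $\dP_0$ case: pairs of vertices in $[n]\setminus\cK$ whose $O(1)$-neighborhoods overlap contribute $O(n)$ to the variance, while disjoint-neighborhood pairs have covariance of order $O(1/n)$ and so contribute only $O(n)$ in aggregate across the $O(n^2)$ such pairs. Conditioning on the location of $\cK$ and treating the $G_0$-edges within $[n]\setminus\cK$ as essentially an independent $\cG(n-K,\lambda/n)$ reduces the variance computation under $\dP_1$ to a routine extension of the $\dP_0$ case.
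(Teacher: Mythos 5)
Your proposal matches the paper's proof in both strategy and execution: both reduce the total-variation claim to the vanishing of the test's two error probabilities, both rest on the observation that the planted connected graph on $K\ge 4$ vertices sits in a single component of size $\ge K$ and hence cannot contribute to $A_1,A_2,A_3$, both compute the same first moments $\dE[A_i]\sim (n-k)^i\lambda^{i-1}e^{-i\lambda}/(c_in^{i-1})$ for $k\in\{0,K\}$ so that the $(n-k)$-factors cancel in $\hat\lambda$ and survive in $\hat k$, and both invoke $O(n)$ variance bounds to conclude $\hat k=k+O_P(\sqrt n)$. You have merely filled in details the paper leaves terse (the negligibility of the $O(1)$ triangle term in $A_3$, the local-dependence argument giving $\VAR[A_i]=O(n)$).
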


\begin{remark}
  When $\lambda$ is known, a simpler test based on the number of edges in the graph  also succeeds. The test in Theorem \ref{thm:easy_detect} still applies even when $\lambda$ is unknown. The proof further implies that under $\dP_1$, $G$ can be distinguished from $\cG(n,\lambda'/n)$ for any $\lambda'$ not necessarily equal to $\lambda$.
\end{remark}

Finally, it is important to note that, as evidenced in \cite{BanksMoore18}, impossibility of detection does not imply immediately that of reconstruction. Fortunately, in our setting, the following result will imply the latter as soon as the former is proved :

\begin{theorem}\label{thm:reconstruction_fails}
  Assume that $K = o(\sqrt{n})$ that $\dE_0\left(X_\Gamma\right) = \omega(1)$ and that $\dE_0\left(L^2\right) = 1 + o(1)$. Then, for every estimator $\hat\cK$ of the planted set $\cK$, we have
  $$ \mathrm{ov}(\hat\cK) = o(K), $$
  that is, reconstruction fails as well.
\end{theorem}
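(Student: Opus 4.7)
The plan is to bound the overlap of any estimator directly through a change of measure together with a refined second-moment analysis. For each node $i\in[n]$, introduce the local likelihood ratio
$$
L_i(G) \;:=\; \frac{\dP_1(G\mid i\in\cK)}{\dP_0(G)} \;=\; \frac{n\,X_\Gamma^{(i)}(G)}{K\,\dE_0[X_\Gamma]},
$$
where $X_\Gamma^{(i)}$ counts embeddings of $\Gamma$ into $G$ whose image contains $i$. By construction $\dE_0[L_i]=1$, and averaging recovers $L(G)=\tfrac{1}{n}\sum_i L_i(G)$ from Lemma~\ref{lem:likelihood_ratio}. For any estimator $\hat\cK$ with $|\hat\cK|=K$, setting $A_i=\{G:i\in\hat\cK(G)\}$ and switching to $\dP_0$,
$$
\ov(\hat\cK) \;=\; \frac{K}{n}\sum_i \dE_0[L_i\,\mathbf{1}_{A_i}] \;=\; \frac{K^2}{n} + \frac{K}{n}\sum_i \dE_0[(L_i-1)\mathbf{1}_{A_i}],
$$
using $\sum_i \dP_0(A_i)=\dE_0[|\hat\cK|]=K$.

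Applying Cauchy--Schwarz inside each expectation and then across $i\in[n]$ bounds the correction in absolute value by $\sqrt{Kn(\dE_0[L_1^2]-1)}$, so the task reduces to showing $\dE_0[L_1^2]=o(n/K)$. For this, counting each embedding pair $(\sigma,\sigma')$ once per vertex it shares,
$$
\sum_i \dE_0\bigl[(X_\Gamma^{(i)})^2\bigr] \;=\; \sum_{\sigma,\sigma'}\bigl|\sigma([K])\cap\sigma'([K])\bigr|\,p^{|\sigma(\cE)\cup\sigma'(\cE)|} \;\le\; K\bigl(\dE_0[X_\Gamma^2]-S_0\bigr),
$$
where $S_0$ is the restriction of $\dE_0[X_\Gamma^2]$ to pairs of embeddings with disjoint vertex sets. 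A direct factorial count gives $S_0=\mu^2\prod_{j=0}^{K-1}(1-K/(n-j))$ with $\mu:=\dE_0[X_\Gamma]$; since $K=o(\sqrt n)$ forces $K^2/n=o(1)$, this equals $(1-o(1))\mu^2$. Combined with $\dE_0[X_\Gamma^2]=(1+o(1))\mu^2$ from the hypothesis, this yields $\sum_i\dE_0[(X_\Gamma^{(i)})^2]=o(K\mu^2)$, hence by symmetry $\dE_0[L_1^2]=(n^2/(K^2\mu^2))\cdot o(K\mu^2/n) = o(n/K)$.

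Plugging back gives $\ov(\hat\cK)\le K^2/n + (K/n)\sqrt{Kn\cdot o(n/K)} = K^2/n + o(K) = o(K)$, using $K=o(\sqrt n)$. The main obstacle is the node-local variance estimate: the scalar hypothesis $\dE_0[L^2]=1+o(1)$ must be upgraded to a bound on $\dE_0[L_i^2]$, and $K=o(\sqrt n)$ is essential precisely there, guaranteeing that disjoint-image embedding pairs dominate $\dE_0[X_\Gamma^2]$. The assumption $\dE_0[X_\Gamma]=\omega(1)$ plays only a minor role, ensuring the second-moment condition is non-vacuous (otherwise $X_\Gamma$ is typically $0$ under $\dP_0$ and $L$ is ill-defined).
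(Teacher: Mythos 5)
Your proposal is correct and takes a genuinely different route from the paper's. The paper works with the posterior $\dP_1(\cK \mid G)$, which is uniform over copies of $\Gamma$ in $G$, and so writes $\ov(\hat\cK) = \dE_1\bigl[\sum_{\Gamma'\in G}|\hat\cK\cap\Gamma'|/X_\Gamma\bigr]$; it then pays $o(K)$ to replace $\dP_1$ by $\dP_0$ using the total-variation bound, applies Cauchy--Schwarz to reduce to $K\,\dE_0[\sqrt{\mathcal I_\Gamma}]$ where $\mathcal I_\Gamma$ is the fraction of ordered pairs of copies of $\Gamma$ in $G$ that intersect, and establishes $\dE_0[\mathcal I_\Gamma]=o(1)$ in a separate lemma (which must handle the random denominator $X_\Gamma^2$ by splitting on the event $\{L^2<1/2\}$ and invoking Chebyshev). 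You instead perform an exact vertex-local change of measure through $L_i = \dP_1(\cdot\mid i\in\cK)/\dP_0(\cdot) = nX_\Gamma^{(i)}/(K\,\dE_0 X_\Gamma)$, which has a deterministic normalizer and so bypasses the small-$X_\Gamma$ complication entirely, and then reduce the problem to a bound on $\dE_0[L_1^2]$ obtained from an overlap-weighted second-moment count; the two key ingredients ($\dE_0[X_\Gamma^2]=(1+o(1))\mu^2$ from the $\dE_0[L^2]$ hypothesis, and $S_0=(1-o(1))\mu^2$ from $K=o(\sqrt n)$) are the same as in the paper's $\dE'$/$\dE''$ decomposition, but organized so that the conclusion falls out of one Cauchy--Schwarz chain with no TV-distance loss. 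Your closing remark about $\dE_0[X_\Gamma]=\omega(1)$ is slightly imprecise — that hypothesis is in fact \emph{forced} by the other two, since the diagonal contribution to $\sum_i\dE_0[(X_\Gamma^{(i)})^2]$ is $K\mu$, and for this to be $o(K\mu^2)$ one needs $\mu\to\infty$ — but this is a matter of exposition, not a gap, and both proofs implicitly rely on it in the same way.
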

\section{Proof strategy for planted paths}\label{sec:4}
We say that the ordered set $\{i_1,\ldots,i_K\}$ of $K$ distinct  nodes in $[n]$ is a $K$-path in $G$ if and only if the edges $(i_{\ell},i_{\ell+1})$ are present in $G$ for all $\ell=1,\ldots, K-1$. The previous Lemma \ref{lem:likelihood_ratio} yields, in the case where $\Gamma$ is the line graph, the following result, whose proof is in the appendix:
\begin{lemma}\label{lemma:LR_paths}
For planted $K$-path, the likelihood ratio reads
\begin{equation}\label{LR_ll}
L(G):=\frac{\dP_1(G)}{\dP_{0}(G)}=\frac{1}{n(n-1)\cdots(n-K+1)}|\{\hbox{$K$-paths in $G$}\}|\left(\frac{\lambda}{n}\right)^{-K+1}\cdot
\end{equation}
Moreover one has
\begin{equation}\label{LR_lll}
\dE_0(L^2)=\dE_0(x^S),
\end{equation}
where $x=n/\lambda$, and $S$ is a random variable counting the number of edges common to the $K$-path $(1-2-\cdots-K)$ and a random $K$-path $\pi$ chosen uniformly at random among the $n(n-1)\cdots(n-K+1)$ possible ones on node set $[n]$.
\end{lemma}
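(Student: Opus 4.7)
My plan is to derive both identities directly from the definitions. For (\ref{LR_ll}) I would invoke Lemma \ref{lem:likelihood_ratio} with $\Gamma$ being the line graph on $[K]$. Under $\dP_0$, each of the $n(n-1)\cdots(n-K+1)$ ordered $K$-tuples of distinct vertices forms a path in $G$ with probability $(\lambda/n)^{K-1}$ by independence of the $K-1$ required edges; linearity of expectation then gives $\dE_0(X_\Gamma) = n(n-1)\cdots(n-K+1)(\lambda/n)^{K-1}$, where $X_\Gamma = |\{K\text{-paths in }G\}|$ is consistent with the paper's ordered-tuple convention. Substituting into $L = X_\Gamma/\dE_0(X_\Gamma)$ yields (\ref{LR_ll}) directly.

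For (\ref{LR_lll}) I would square $L$ and expand $X_\Gamma^2$ as a double sum over pairs $(\pi_1,\pi_2)$ of ordered $K$-paths, viewed as injective maps $[K]\to[n]$. Under $\dP_0$, the joint event that both $\pi_1$ and $\pi_2$ are present in $G$ has probability $(\lambda/n)^{|E(\pi_1)\cup E(\pi_2)|}$ by independence of edges. Writing $|E(\pi_1)\cup E(\pi_2)| = 2(K-1) - S(\pi_1,\pi_2)$ with $S$ counting shared edges, the $(\lambda/n)^{-2(K-1)}$ factor coming from the squared normalization in $L^2$ combines with the matching part of this expectation to leave
\begin{equation*}
\dE_0(L^2) = \frac{1}{[n(n-1)\cdots(n-K+1)]^2}\sum_{\pi_1,\pi_2}\left(\frac{n}{\lambda}\right)^{S(\pi_1,\pi_2)}.
\end{equation*}

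To close, I would exploit the transitive action of the symmetric group on $[n]$ on ordered $K$-tuples of distinct vertices: this action preserves the set of $K$-paths in the complete graph on $[n]$ and preserves the overlap statistic, hence the inner sum $\sum_{\pi_2}(n/\lambda)^{S(\pi_1,\pi_2)}$ is independent of $\pi_1$. Fixing $\pi_1$ to be the canonical path $(1\text{-}2\text{-}\cdots\text{-}K)$ and cancelling a factor $n(n-1)\cdots(n-K+1)$ identifies the remaining average as $\dE[(n/\lambda)^S]$, where $S$ is the number of edges shared between this canonical path and a uniformly random $K$-path $\pi$; this is exactly (\ref{LR_lll}). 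The calculation is conceptually light; the main pitfall is the bookkeeping around the ordered-versus-unordered convention for $K$-paths (each physical line being counted twice, once per orientation), and keeping this consistent between the count $X_\Gamma$, the normalization $n(n-1)\cdots(n-K+1)$, and the distribution of $\pi$ so that the powers of $\lambda/n$ cancel cleanly.
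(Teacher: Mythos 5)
Your proof is correct and takes essentially the same route as the paper: both derive \eqref{LR_ll} directly from Lemma \ref{lem:likelihood_ratio}, then expand $\dE_0(L^2)$ as a double sum over ordered $K$-paths, exploit the symmetry of the $S_n$-action to fix one path to the canonical $(1\text{-}2\text{-}\cdots\text{-}K)$, and use the identity $|E(\pi_1)\cup E(\pi_2)| = 2(K-1)-S$ (equivalently, the conditional probability $(\lambda/n)^{K-1-S}$) to arrive at \eqref{LR_lll}. Your remark about the ordered-versus-unordered convention is well taken and is exactly how the factor $|\mathrm{Aut}(\Gamma)|=2$ from Lemma \ref{lem:likelihood_ratio} is absorbed into the ordered count $n(n-1)\cdots(n-K+1)$.
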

%
%
\subsection{Impossibility of detection}
We have the following
\begin{theorem}\label{thm:detection_fails}
Assume that $\lambda>1$ and $K=o(\sqrt{n})$, or alternatively that $\lambda<1$ and $K=\ln(n)/\ln(1/\lambda)-\omega(\ln(\ln(n)))$. Then the total variation distance $|\dP_1-\dP_0|_{var}$ between $\dP_0$ and $\dP_1$ goes to zero as $n\to \infty$. Thus for any arbitrary test $T(G)\in\{0,1\}$, $\dP_1(T(G)=1)-\dP_0(T(G)=1)\to 0$ as $n\to \infty$.
\end{theorem}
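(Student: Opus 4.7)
The plan is to apply the second moment method to the likelihood ratio $L$. Since $\dE_0(L) = 1$, Cauchy--Schwarz yields $|\dP_1 - \dP_0|_{var} = \tfrac{1}{2}\dE_0|L-1| \le \tfrac{1}{2}\sqrt{\dE_0(L^2) - 1}$, so it suffices to show $\dE_0(L^2) = 1 + o(1)$ in each of the two regimes. By Lemma~\ref{lemma:LR_paths} this reduces to proving $\dE[(n/\lambda)^S] = 1 + o(1)$, where the expectation is over a uniformly random $K$-path $\pi$ on $[n]$ and $S$ counts the edges common to $\pi$ and the fixed path $1{-}2{-}\cdots{-}K$.

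I would expand the moment generating function as a sum over subsets of the fixed edges:
$$\dE\!\left[(n/\lambda)^S\right] = 1 + \sum_{\emptyset \ne F} (n/\lambda - 1)^{|F|}\,\dP(F \subseteq \pi),$$
and group subsets $F$ by their number of edges $k$ and their number of connected components $m$. Any such $F$ is a disjoint union of $m$ sub-paths of the fixed line summing to $k$ edges, and the number of such $F$ equals $\binom{k-1}{m-1}\binom{K-k}{m}$. Treating each component of $F$ as an oriented block that must appear as a contiguous factor of the random injection $\pi$, a direct enumeration yields
$$\dP(F \subseteq \pi) = \frac{(K-k)(K-k-1)\cdots(K-k-m+1)\cdot 2^m}{n(n-1)\cdots(n-k-m+1)}.$$
For $K = o(\sqrt n)$ the denominator is $n^{k+m}(1+o(1))$ uniformly in $k+m \le K$, and crude upper bounds on the two binomials collapse the double sum to
$$\dE_0(L^2) \le 1 + (1+o(1))\,\tfrac{2K^2}{n}\sum_{k=1}^{K-1}\lambda^{-k}\bigl(1 + 2K^2/n\bigr)^{k-1}.$$

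It then remains to check that this upper bound is $1 + o(1)$ in each regime. When $\lambda > 1$ and $K = o(\sqrt n)$, the ratio $(1+2K^2/n)/\lambda$ is bounded away from $1$, so the geometric sum is $O(1)$ and the prefactor $2K^2/n$ supplies the $o(1)$. When $\lambda < 1$ and $K = \ln(n)/\ln(1/\lambda) - \omega(\ln\ln n)$, the sum is dominated by its last term of order $\lambda^{-K} = n\,(\ln n)^{-\omega(1)}$, and multiplying by $K^2/n = O((\ln n)^2/n)$ still yields $o(1)$. The delicate point is the subcritical calculation, where individual terms $\lambda^{-k}$ grow exponentially and the threshold $K^* = \ln n/\ln(1/\lambda)$ emerges precisely as the cancellation point; the $\omega(\ln\ln n)$ slack in the hypothesis is exactly what is required to absorb the polylogarithmic factors arising from the binomial count and from the $(1+2K^2/n)^{k-1}$ correction.
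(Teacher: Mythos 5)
Your proposal is correct, and it takes a genuinely different route from the paper's proof. The paper bounds $\dE_0(L^2)=\dE_0\bigl[(n/\lambda)^S\bigr]$ via a coupling with a three-state Markov chain (Lemma~\ref{lemma:mkv_upper_bound}) followed by a spectral perturbation analysis of the transfer matrix $M=M_0+O(K/n)M_1$, invoking Bauer--Fike and Kato to control the eigenvalues. You instead expand $(n/\lambda)^S=\prod_e\bigl(1+(n/\lambda-1)\II_{e\in\pi}\bigr)$ and enumerate directly. Your combinatorics check out: for $F$ consisting of $m$ maximal sub-paths totalling $k$ edges the count $\binom{k-1}{m-1}\binom{K-k}{m}$ is the standard stars-and-bars placement, and $\dP(F\subseteq\pi)=(K-k)(K-k-1)\cdots(K-k-m+1)\,2^m/\bigl(n(n-1)\cdots(n-k-m+1)\bigr)$ follows by treating each block as a super-position with two orientations (I verified this against small cases). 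Using $(K-k)^{\underline m}\binom{K-k}{m}\le K^{2m}/m!$, dropping the $1/m!$, and $n^{\underline{k+m}}\ge n^{k+m}(1-o(1))$ uniformly (valid since $k+m\le K=o(\sqrt n)$), the inner sum over $m$ collapses via the binomial theorem to $(2K^2/n)(1+2K^2/n)^{k-1}$ exactly as you state, and the two regime checks are correct — in particular in the subcritical case $\lambda^{-K}=n(\ln n)^{-\omega(1)}$ beats the $K^2/n=O((\ln n)^2/n)$ prefactor precisely because of the $\omega(\ln\ln n)$ slack. Your route is more elementary and self-contained; the paper's Markov-chain technique is heavier here but pays for itself by being reused in the proof of Lemma~\ref{dvar_control} for $\tau$-paths.
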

By a standard argument, the variation distance $|\dP_1-\dP_0|_{var}$ is upper-bounded by$\sqrt{\dE_0(L^2)-1}$, and thus the Theorem is a direct consequence of the following
\begin{lemma}\label{lemma_1}
Assume that $\lambda>1$ and $K=o(\sqrt{n})$, or alternatively that $\lambda<1$ and $K=\ln(n)/\ln(1/\lambda)-\omega(\ln(\ln(n)))$. Then $\lim_{n\to\infty}\dE_0(L^2)=1$.
\end{lemma}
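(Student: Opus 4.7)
The plan is to invoke Lemma~\ref{lemma:LR_paths} to reduce the statement to showing $\dE(x^S) \to 1$ with $x = n/\lambda$ and $S$ the number of edges shared between the fixed path $\pi_0 = (1,2,\ldots,K)$ and a uniformly sampled ordered $K$-path $\pi'$ on $[n]$. The natural starting point is the expansion
\[
x^S = \prod_{e \in E(\pi_0)} \bigl(1 + (x-1)\mathbf{1}(e \in E(\pi'))\bigr) = \sum_{F \subseteq E(\pi_0)} (x-1)^{|F|}\, \mathbf{1}(F \subseteq E(\pi')),
\]
so that $\dE(x^S) = \sum_F (x-1)^{|F|}\dP(F \subseteq E(\pi'))$. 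I would organize the sum by $k = |F|$ and $c$, the number of connected components of $F$ (each component being a sub-path of $\pi_0$).

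Two combinatorial ingredients then feed into the estimate. First, the number of $F \subseteq E(\pi_0)$ with $|F|=k$ and $c$ components equals $\binom{k-1}{c-1}\binom{K-k}{c}$, obtained by combining compositions of $k$ into $c$ positive parts with placements of $c$ pairwise non-adjacent intervals in the $K-1$ edges of $\pi_0$. Second, requiring each of the $c$ components of $F$ to appear as a consecutive (forward or reversed) block of $\pi'$ yields, by explicit enumeration of eligible $\pi'$'s,
\[
\dP(F \subseteq E(\pi')) = 2^c\,\frac{(K-k)(K-k-1)\cdots(K-k-c+1)}{n(n-1)\cdots(n-k-c+1)} \le (1+o(1))\,(2K/n)^c/n^k,
\]
uniformly for $k+c \le K = o(n)$. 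Combining with $(x-1)^k \le (n/\lambda)^k$ produces the master bound
\[
\dE(x^S) - 1 \le (1+o(1)) \sum_{c \ge 1} \Bigl(\tfrac{2K}{n}\Bigr)^c T_c, \qquad T_c := \sum_{k=c}^{K-c} \binom{k-1}{c-1}\binom{K-k}{c}\lambda^{-k}.
\]

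It then remains to verify that the right-hand side is $o(1)$ in both regimes. In the supercritical case $\lambda>1$, the geometric decay of $\lambda^{-k}$ gives $T_c \le K^c/(c!(\lambda-1)^c)$ via the identity $\sum_{k\ge c}\binom{k-1}{c-1}z^k = z^c/(1-z)^c$; the sum over $c$ collapses to $\exp\bigl(2K^2/(n(\lambda-1))\bigr)-1 = O(K^2/n) = o(1)$ under $K=o(\sqrt{n})$. In the subcritical case $\lambda<1$, the sum $T_c$ is instead dominated by $k$ close to $K$; combining $\binom{k-1}{c-1}\le K^{c-1}/(c-1)!$ with $\sum_{j\ge 0}j^c\lambda^j \le c!/(1-\lambda)^{c+1}$ yields $T_c \le K^{c-1}\lambda^{-K}/((c-1)!(1-\lambda)^{c+1})$. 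The hypothesis $K=\ln(n)/\ln(1/\lambda) - \omega(\ln\ln(n))$ rewrites as $\lambda^{-K} = n/(\ln n)^{\omega(1)}$, so the $c=1$ contribution is $O(K\lambda^{-K}/n) = O(\ln(n)/(\ln n)^{\omega(1)}) = o(1)$ since $K = O(\ln n)$, while successive $c$ terms shrink by a factor $O(K^3/n) = o(1)$, so the full sum is $o(1)$.

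The main obstacle is the subcritical $c=1$ analysis: the threshold $K^* = \ln(n)/\ln(1/\lambda)$ is exactly where $K\lambda^{-K}/n$ transitions from vanishing to unbounded, and the explicit $\omega(\ln\ln(n))$ slack in the hypothesis is precisely what pushes the second-moment bound onto the vanishing side; any weaker slack (say an additive $O(\ln\ln(n))$ correction) would leave the bound on the order of a positive power of $\ln(n)$ and the argument would fail.
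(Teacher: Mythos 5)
Your proof is correct, and it takes a genuinely different route from the paper's. The paper proceeds by revealing the random path one vertex at a time, encoding the incremental contribution to $S$ via a $\{-1,0,1\}$-valued process $Z_t$, dominating that process by an explicit three-state Markov chain (Lemma~\ref{lemma:mkv_upper_bound}), and then bounding $\dE_0(x^{\sum Z'^+_t})$ through a spectral perturbation analysis of the transition matrix $M = M_0 + (K/n)M_1$, whose unperturbed eigenvalues $\{0,1/\lambda,1\}$ explain the appearance of $1/\lambda$ and the $(1+O(K/n))^{K}$-type factors. You instead expand $x^S$ as $\sum_{F\subseteq E(\pi_0)}(x-1)^{|F|}\II(F\subseteq E(\pi'))$, count edge subsets by size $k$ and component number $c$ via $\binom{k-1}{c-1}\binom{K-k}{c}$, compute $\dP(F\subseteq E(\pi'))$ in closed form (the $2^c$ for block orientations, the falling factorials for block placements), and then close the estimate by elementary generating-function identities. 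I checked the combinatorics: the count of $F$'s with $c$ components, the exact formula for $\dP(F\subseteq E(\pi'))$, the negative-binomial identity $\sum_{k\ge c}\binom{k-1}{c-1}z^k = (z/(1-z))^c$ in the supercritical branch, and the Eulerian-polynomial bound $\sum_{j\ge 0}j^c\lambda^j \le c!/(1-\lambda)^{c+1}$ in the subcritical branch all hold, and the final calculations reproduce the paper's thresholds ($K^2/n=o(1)$ when $\lambda>1$; $K\lambda^{-K}/n=o(1)$, equivalent to the $\omega(\ln\ln n)$ slack, when $\lambda<1$). What each approach buys: your route is more self-contained and elementary — no coupling, no Bauer--Fike or Kato perturbation bounds — and it exposes more transparently where the thresholds come from (the geometric series in $1/\lambda$ vs.\ the $\lambda^{-K}$ blow-up at $k\approx K$). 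The paper's Markov-chain/spectral machinery is heavier here but is deliberately built to be reused: the same $Z'_t$ coupling and the same perturbed matrix $M$ reappear in the variance and intersection estimates of Lemma~\ref{dvar_control}, which drive the reconstruction-impossibility Theorem~\ref{thm:impossible_reconstr_line}; your expansion would have to be re-derived from scratch for those. One cosmetic slip in your write-up: the ratio between consecutive $c$-terms in the subcritical sum is $O(K^2/n)$, not $O(K^3/n)$ as you wrote, but since both are $o(1)$ the conclusion is unaffected.
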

The proof of Lemma \ref{lemma_1} (details in the Appendix)
is based on an analysis of  expression \eqref{LR_lll}.  Set $Z_t=1$ if edge $(I_{t},I_{t+1})$ is part of path $(1\cdots K)$, $Z_t=0$ if it is not part of that path, but $I_{t+1}\in[K]$, and finally $Z_t=-1$ if $I_{t+1}\notin [K]$, so that
\begin{equation}\label{squareLR}
\dE_0(L^2)=\dE_0(x^{\sum_{t=1}^{K-1}Z_t^+})
\end{equation}
In order to upper-bound this expression, a key step is the following Lemma, which exhibits a tractable upper bound involving a Markov chain:
\begin{lemma}\label{lemma:mkv_upper_bound}
Let $n':=n-K$. The Markov chain $\{Z'_t\}_{t\ge 1}$ taking values in $\{-1,0,1\}$ with transition probability matrix
\begin{equation}\label{eq:transition_proba_Z'}
P:=\left(
\begin{array}{lll}
1-K/n' & K/n'& 0\\
1-K/n' & (K-2)/n' & 2/n'\\
1-K/n' & (K-1)/n' & 1/n'
\end{array}
\right)
\end{equation}
can be constructed jointly with process $\{Z_t\}_{t\ge 1}$ so that, for all $m\ge 1$, one has
\begin{equation}\label{eq:coupling1}
\dE_0(x^{\sum_{t=1}^{m}Z_t^+})\le \dE_0(x^{\sum_{t=1}^{m}Z'^+_t}).
\end{equation}
\end{lemma}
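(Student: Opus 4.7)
The plan is to prove the expectation inequality by a backward‐induction argument on a value function of the Markov chain $Z'$. Once this is established, the coupling in the lemma is trivial: one can simply realize $Z'$ on the same probability space as $Z$ using independent randomness. I would first derive, by a case analysis on $Z_t\in\{-1,0,1\}$ and conditioning on the history $\mathcal F_t=\sigma(I_1,\ldots,I_{t+1})$ of the random path, the two single-step bounds
\begin{align*}
\dP(Z_{t+1}=1\mid\mathcal F_t) &\le P(Z_t,1),\\
\dP(Z_{t+1}\in\{0,1\}\mid\mathcal F_t) &\le P(Z_t,\{0,1\})=K/n',
\end{align*}
which use $n-t-1\ge n-K=n'$ and the fact that each node in $[K]$ has at most two planted-path neighbors (with one of them already visited when $Z_t=1$). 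Together they imply the signed-measure inequality $\sum_w[\dP(Z_{t+1}=w\mid\mathcal F_t)-P(Z_t,w)]\,a_w\le 0$ for every non-decreasing $a:\{-1,0,1\}\to\RR$: writing $\delta_w$ for the bracket and using $\delta_{-1}+\delta_0+\delta_1=0$, the sum equals $\delta_0(a_0-a_{-1})+\delta_1(a_1-a_{-1})$, and a case analysis on the sign of $\delta_0$ (using $\delta_1\le 0$ and $\delta_0+\delta_1\le 0$) gives the claim.

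I would next introduce the value function $\Phi'(t,z):=\dE[x^{\sum_{s=t+1}^m Z'^+_s}\mid Z'_t=z]$ and the quantity $a_w^t:=x^{w^+}\Phi'(t,w)$, which satisfies $\Phi'(t,z)=\sum_w P(z,w)\,a_w^{t+1}$. The key structural claim is that $w\mapsto a_w^t$ is non-decreasing for every $t\le m$. The inequality $a_0^t\ge a_{-1}^t$ is immediate (rows $-1$ and $0$ of $P$ differ by a transfer of mass $2/n'$ from state $0$ to state $1$, combined with $a_1^{t+1}\ge a_0^{t+1}$), while $a_1^t\ge a_0^t$ follows from the scalar recursion
\begin{equation*}
d_t=(x-1)\,a_{-1}^t+\frac{x-2}{n'}\,d_{t+1},\qquad d_t:=a_1^t-a_0^t,\qquad d_m=x-1.
\end{equation*}
This recursion is obtained from the algebraic identity $(K-1)(x-1)+1+(x-2)=K(x-1)$, which cleanly collapses the apparent dependence on $K$. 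Unrolling gives $d_t=(x-1)\sum_{s\ge 0}((x-2)/n')^s\,a_{-1}^{t+s}$, which is non-negative for $x\ge 1$ and $n'$ large.

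With these two ingredients in place, a backward induction on $t$ yields the conditional bound
\begin{equation*}
\dE\bigl[x^{\sum_{s=t+1}^m Z_s^+}\bigm|\mathcal F_t\bigr]\le \Phi'(t,Z_t).
\end{equation*}
The inductive step rewrites the LHS as $\sum_w\dP(Z_{t+1}=w\mid\mathcal F_t)\,a_w^{t+1}$ via the tower property and applies the signed-measure inequality of the first paragraph to the non-decreasing function $a^{t+1}_w$ produced in the second. Taking expectations at $t=0$ with $Z'_1$ matched in law to $Z_1$ delivers the claimed inequality $\dE_0(x^{\sum_{t=1}^m Z_t^+})\le \dE_0(x^{\sum_{t=1}^m Z'^+_t})$.

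The main obstacle is the monotonicity of $a^t_w$. It is \emph{not} a consequence of any stochastic monotonicity of $P$: indeed, $P(0,1)=2/n'>1/n'=P(1,1)$, so $P$ is manifestly non-monotone in the natural order on $\{-1,0,1\}$, and a naive pointwise quantile coupling breaks down whenever the pair $(Z_t,Z'_t)=(0,1)$ arises. What rescues the argument is the cancellation $(K-1)(x-1)+1+(x-2)=K(x-1)$ that reduces the two-variable recursion for $d_t$ to a one-variable linear recursion with a non-negative forcing term, combined with the fact that the contraction factor $(x-2)/n'$ is small in absolute value for $n'$ large, so that the positive ``reward'' term $(x-1)a^t_{-1}$ dominates.
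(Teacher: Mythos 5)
Your proposal is correct, and it takes a genuinely different route from the paper's. Both start from the same single-step conditional bounds on the process $Z$ (namely $\dP(Z_{t+1}=1\mid\cF_t)\le P(Z_t,1)$ and $\dP(Z_{t+1}\ge 0\mid\cF_t)\le K/n'$), but the paper processes them via an explicit pathwise coupling: it constructs $(Z_t,Z'_t)$ jointly, observes that pathwise dominance $Z_{t+1}\le Z'_{t+1}$ can only fail out of the pair $(Z_t,Z'_t)=(0,1)$, and then does an accounting argument — each overshoot of $Z$ at time $t+1$ is preceded by a compensating hit of $Z'$ at time $t$ — to conclude $\sum Z_t^+\le\sum Z'^+_t$ almost surely, hence the expectation bound. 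You instead avoid any pathwise comparison and prove the expectation inequality directly via backward induction on the value function $\Phi'(t,z)$ of the $Z'$-chain. The load-bearing fact in both proofs is that $P$ is not stochastically monotone ($P(0,1)=2/n'>1/n'=P(1,1)$), which is exactly why a naive quantile coupling breaks and why, on your side, the monotonicity of $a^t_w=x^{w^+}\Phi'(t,w)$ is genuinely non-trivial; your scalar recursion for $d_t=a^t_1-a^t_0$, using the cancellation that removes the $K$-dependence, is a clean way to settle it. A minor technical difference: the paper's pathwise argument gives the inequality for any $x\ge 1$, whereas your unrolled recursion for $d_t$ requires $x\ge 2$ (or else an extra bound on $d_{t+1}$ to dominate the negative forcing term) — harmless here since $x=n/\lambda\gg 1$, but slightly less general. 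You also initialize $Z'_1$ with the law of $Z_1$, whereas the paper takes $Z'_1$ with the explicit upper-bound distribution and uses that specific $F(1)$ in the downstream spectral analysis of Lemma~\ref{lemma_1}; both choices satisfy the lemma as stated (which does not fix the initial distribution), but if one wanted to feed your version into the rest of the paper's proof, $F(1)$ would need a correspondingly minor adjustment.
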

Its proof is in the appendix, together with the analysis of the right-hand side of \eqref{eq:coupling1}. The latter relies on spectral analysis of a matrix derived from $P$ in \eqref{eq:transition_proba_Z'}, which leverages perturbation arguments as $K/n\to 0$. It concludes the proof of Lemma \ref{lemma_1} by showing that $\dE_0(L^2)=1+o(1)$ under the Lemma's assumptions. 

\subsection{Easiness of detection and reconstruction, sparse case}
Assume $\lambda<1$ and $K=\ln(n)/\ln(1/\lambda)+\omega(1)$. Detection is then easy: under $\dP_0$, the expected number of $K$-paths in the graph is $o(1)$. A test which decides $\dP_1$ if there is a $K$-path and $\dP_0$ otherwise thus discriminates the two hypotheses with high probability. Presence of a $K$-path can moreover be determined in polynomial time by running depth-first searches from each node in $G$.

For reconstruction, we need the following
\begin{lemma}\label{lemma:reconstr_lines_subcritical}
For $\lambda<1$, $K=\ln(n)/\ln(1/\lambda)+\omega(1)$ and $K=o(n/\ln(n))$, let $C$ be the connected component of the graph containing the longest path. Apply $\sqrt{K}$ times a {\em peeling operation} to $C$, which consists in removing all degree one nodes, to obtain set $C'$. Under $\dP_1$, set $C'$ and its intersection with the planted path both have with high probability size $K\pm o(K)$.
\end{lemma}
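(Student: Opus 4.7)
The plan is to show, under $\dP_1$, that with high probability the component $C$ of the longest path in $G$ contains the whole planted path together with only $O(K)$ extra nodes organized as shallow trees, and then to run a peeling analysis showing that $\sqrt{K}$ rounds strip the external decorations while removing only $o(K)$ backbone nodes from the two ends of the path.

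\emph{Step 1 (the longest path lies in the planted component).} Under $\dP_0=\cG(n,\lambda/n)$ the expected number of self-avoiding $K$-paths is at most $n^K(\lambda/n)^{K-1}=n\lambda^{K-1}=o(1)$ as soon as $K\ge\ln(n)/\ln(1/\lambda)+\omega(1)$, so $G_0$ contains no $K$-path whp. Under $\dP_1$ the planted path itself has length $K-1$, so any longest path in $G$ must use at least one planted edge and therefore lies in the component $C$ of the planted path, which in particular contains all of $\cK$.

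\emph{Step 2 (structure of $C$).} Conditionally on $\cK$, the nodes of $C\setminus\cK$ are the $G_0$-components of external neighbors of $\cK$ in $[n]\setminus\cK$. Each planted node has $\Bin(n-K,\lambda/n)$ external neighbors, and each such external cluster is, up to an $o(1)$ coupling error, a Poisson$(\lambda)$ Galton--Watson tree. A first-moment bound on depth-$d$ descendants gives $\dP(\mathrm{depth}\ge d)\le\lambda^d$; a union bound over the $O(K)$ clusters yields a maximum external depth $D^*\le C_0\ln(K)/\ln(1/\lambda)=o(\sqrt{K})$ whp, using $K\ge\ln(n)/\ln(1/\lambda)$. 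Standard branching-process concentration also gives $|C|=K/(1-\lambda)+o(K)$ whp. Occasional cycles in $C$ (chords between planted nodes, expected count $O(K^2/n)$, or longer cycles through external nodes) contribute $o(K)$ extra nodes since $K=o(n/\ln n)$.

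\emph{Step 3 (peeling).} Modelling $C$ as the backbone $1{-}2{-}\cdots{-}K$ with an external subtree of depth $d_i\le D^*$ attached at each node $i$, a direct induction shows that the round at which $i$ is peeled from the left is $r^L(i)=\max_{j\le i}(d_j+1+(i-j))$, and symmetrically from the right. Taking $j=1$ gives $r^L(i)\ge i$, so every backbone node with $i>t$ survives round $t$ from the left; symmetrically every $i<K-t+1$ survives from the right. For $t=\sqrt{K}$ this leaves $\ge K-2\sqrt{K}$ backbone nodes, while every external subtree is fully removed because $D^*<t$. The main obstacle is this bookkeeping: since the $d_i$ vary, the peeling front advances at a depth-dependent speed and what saves the argument is the uniform bound $D^*=o(\sqrt{K})$ from Step 2; the cycle correction is absorbed because each cycle forces at most $O(D^*)$ additional nodes into the $2$-core, so the $o(K)$ cycles contribute only $o(K)$ surviving nodes. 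Combining yields $|C'|=K(1+o(1))$ and $|C'\cap\cK|=K(1+o(1))$.
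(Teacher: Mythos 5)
Your proposal is correct and follows essentially the same strategy as the paper's proof: identify the component of the planted path, observe that the external $G_0$-decorations attached to each planted node are small subcritical clusters, and argue that $\sqrt{K}$ rounds of peeling strip these decorations away while only trimming $O(\sqrt{K})$ nodes from the two ends of the backbone. The paper implements this by bounding the $G_0$-component sizes $|C_i|$ of the planted nodes directly via exponential tails (no Galton--Watson depth estimate is needed) and by splitting the analysis into the regime $K^2=o(n)$, where whp no two $C_i$'s meet and $C$ is exactly a tree, versus $K^2=\Omega(n)$, where collisions do occur but the resulting surviving nodes have expectation at most $K\,\dP(\cE_i)\cdot\theta\ln n = O(K^2\ln n/n)=o(K)$. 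Your Step~1 (why the longest-path component is the planted one) and your explicit left-peeling formula $r^L(i)=\max_{j\le i}(d_j+1+(i-j))$ usefully spell out details the paper leaves implicit; the aside $|C|=K/(1-\lambda)+o(K)$ is correct but plays no role. Your collision bookkeeping is a bit loose in phrasing (``the $o(K)$ cycles contribute only $o(K)$ surviving nodes'' conflates the count of cycles with the count of surviving external nodes), but the estimate you actually need, $O(K^2/n)\times O(\ln n)=o(K)$ under $K=o(n/\ln n)$, does hold, and matches the paper's bound.
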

The Lemma readily implies a polynomial-time algorithm for reconstruction that achieves overlap $K-o(K)$: set $C'$ can be obtained in polynomial time. By adding / removing $o(K)$ nodes to it one obtains a set of size $K$ with overlap $K-o(K)$.
\subsection{Impossibility of reconstruction, dense case}
We assume $\lambda>1$ and $K=\omega(\sqrt{n})$. We have seen that with high probability, observation of $G$ allows to determine whether or not an attack has taken place. We now assume that an attack has indeed happened. We have the following result, showing the impossibility of efficient planted structure reconstruction:
\begin{theorem}\label{thm:impossible_reconstr_line}
Given $\lambda>1$, $K=\omega(\sqrt{n})$, $K=o(n)$, and a realization $G$ of the graph under $\dP_1$, any estimator $\hat{\cK}$ of the ground truth achieves negligible overlap, i.e. $\ov(\hat\cK)=o(K)$.
\end{theorem}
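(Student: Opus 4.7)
The plan is to exploit the uniform posterior of the planted map $\sigma$ given $G$ under $\dP_1$ in order to bound the overlap of the Bayes-optimal estimator. A direct Bayesian computation parallel to Lemma~\ref{lem:likelihood_ratio} shows that $\dP_1(\sigma=s\mid G)$ is uniform over the set $\cP(G)$ of $K$-paths of $G$: conditional on $\sigma$, each edge outside $E(\sigma)$ is independently present with probability $\lambda/n$, so $\dP_1(G\mid\sigma)$ depends on $\sigma$ only through the indicator $\mathbb{1}\{\sigma\subset G\}$. Writing $N_K(G):=|\cP(G)|$ and $N_K^{(v)}(G)$ for the number of $K$-paths of $G$ passing through vertex $v$, the marginal posterior is $q_v(G):=\dP_1(v\in\cK\mid G)=N_K^{(v)}(G)/N_K(G)$. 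The Bayes-optimal estimator picks the $K$ vertices with the largest $q_v(G)$, so its overlap equals $\dE_1[\sum_{v=1}^{K} q_{(v)}(G)]$, with $q_{(1)}\ge q_{(2)}\ge\cdots$ denoting the sorted marginals.

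Applying Cauchy--Schwarz, Jensen, and the vertex-exchangeability of $\dP_1$ (which forces $\dE_1[q_v^2]$ to be independent of $v$), one gets
\[
\dE_1\!\left[\sum_{v=1}^{K}q_{(v)}(G)\right]\le\sqrt{K\,n\,\dE_1[q_1^2]}=\sqrt{K\cdot\dE_1\!\left[|\cK\cap\cK'|\right]},
\]
where $\cK'$ is an independent copy of $\cK$ drawn from the same posterior given $G$. Hence it suffices to prove $\dE_1[|\cK\cap\cK'|]=o(K)$, i.e.\ two independent posterior samples have essentially disjoint supports. For calibration, if $\cK,\cK'$ were simply independent uniform $K$-subsets of $[n]$, one would have $\dE[|\cK\cap\cK'|]=K^2/n=o(K)$; the task is to show that the common conditioning on $G$ does not substantially inflate this intersection.

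To bound $\dE_1[|\cK\cap\cK'|]=\dE_1[\sum_v N_K^{(v)}(G)^2/N_K(G)^2]$, I would combine a high-probability lower bound $N_K(G)\ge c\,n\lambda^{K-1}$ under $\dP_1$ with an expectation bound on $\sum_v (N_K^{(v)})^2$ derived from the identity
\[
\sum_v \bigl(N_K^{(v)}(G)\bigr)^2=\sum_{\sigma_1,\sigma_2\in\cP(G)}|\mathrm{supp}(\sigma_1)\cap\mathrm{supp}(\sigma_2)|
\]
and a pair-of-paths estimate in the spirit of the second-moment computation of Lemma~\ref{lemma:LR_paths}: the dominant contribution comes from pairs of paths that are nearly disjoint, for which the vertex overlap is $O(K^2/n)$ times the pair count. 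The lower bound is obtained by a constrained second-moment argument restricted to paths lying entirely in the giant component of $G_0$, each supercritical starting vertex yielding $\Theta(\lambda^{K-1})$ self-avoiding walks with positive probability; the $O(K)$ edges added by planting contribute only $O(1)$ extra paths through each of their endpoints, a negligible perturbation. Combining these ingredients gives $\dE_1[|\cK\cap\cK'|]=O(K^2/n)$, so the Bayes-optimal overlap is $O(K^{3/2}/\sqrt n)=o(K)$ since $K=o(n)$.

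The main obstacle is that in the supercritical regime, $\dE_0[L^2]\to\infty$ as soon as $K=\omega(\sqrt n)$, so the standard second-moment method fails to concentrate $N_K(G)$ directly. The crux is therefore establishing the high-probability lower bound $N_K(G)=\Omega(n\lambda^{K-1})$ despite this divergence, by restricting attention to paths inside the giant component of $G_0$ (where the relevant second-moment ratio remains bounded) and controlling the $O(K)$-edge perturbation induced by the planted path, which is non-negligible compared with the Erd\H{o}s--R\'enyi fluctuations in the regime $K=\omega(\sqrt n)$.
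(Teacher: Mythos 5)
Your high-level framework is sound: the posterior over the planted path given $G$ is uniform over the $K$-paths of $G$, so the Bayes-optimal overlap equals $\dE_1\bigl[\sum_{v=1}^K q_{(v)}(G)\bigr]$ with $q_v(G)=N_K^{(v)}(G)/N_K(G)$, and the Cauchy--Schwarz reduction to $\sqrt{K\,\dE_1[|\cK\cap\cK'|]}$ (the expected intersection of two posterior replicas) is correct. This is a genuinely different route from the paper's, which never attempts to control the global path count $N_K(G)$.

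The gap, however, is exactly where you flag it, and your proposed fix does not close it. The divergence of $\dE_0[N_K^2]/\dE_0[N_K]^2$ for $K=\omega(\sqrt n)$, $\lambda>1$ is not caused by paths leaving the giant component: the dominant contribution comes from ordered pairs of paths sharing edge segments, with pairs sharing $r$ disjoint segments of total length $S$ contributing $\Theta\bigl((K^2/n)^r\lambda^{-S}\bigr)$ to the ratio; summing gives $\exp\bigl(\Theta(K^2/n)\bigr)\to\infty$. Restricting to paths inside the giant component does not suppress these heavily correlated pairs, so it does not give you the needed lower-tail concentration $N_K(G)\geq c\,n\lambda^{K-1}$ w.h.p. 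Moreover, in this regime $\dP_1$ and $\dP_0$ are \emph{not} close in total variation (detection is easy, Theorem~\ref{thm:easy_detect}), so you also cannot transfer a $\dP_0$ concentration statement to $\dP_1$ for free; and since $N_K$ appears in the denominator, the (uncontrolled) event that $N_K$ is atypically small is exactly where $|\cK\cap\cK'|$ could be of order $K$. So the step ``$\dE_1[|\cK\cap\cK'|]=O(K^2/n)$'' remains unproved, and it is the entire content of the theorem.

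The paper's proof avoids this obstruction altogether by working locally. It chops the planted path into segments of length $M=L+D$ with $D^2\ll n/\ln n$, and for each segment builds $\tau$ replacement paths of length $D$ whose existence and count \emph{do} concentrate (the relevant second-moment ratio is $1+O(D^2/n)=1+o(1)$, so the Markov-chain bound of Lemma~\ref{lemma:mkv_upper_bound} plus Janson's inequality suffice). Statistical indistinguishability of the lure segments from the true segment (Equation~\eqref{eq_dvar_1}) then forces the MAP estimator to share its $K$-vertex budget among $\tau+1$ near-disjoint candidate paths, giving $\ov(\hat\cK)\leq K/(\tau+1)+o(K)$ for every fixed $\tau$. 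The move from a global count of $K$-paths (which does not concentrate) to a local count of $D$-paths (which does) is precisely what your proposal is missing; absent a new idea for lower-tail concentration of $N_K$ at this scale, the argument as sketched does not go through.
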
 
Its proof structure is as follows. Fix an arbitrary integer $\tau\ge 1$. We shall establish that necessarily 
\begin{equation}\label{overlap_control_1}
\ov(\cK)\le K/(\tau+1)+o(K). 
\end{equation}
Fix 
\begin{equation}\label{eq:LD_1}
L=C\ln(n)\hbox{ for some suitable constant $C$, } D\gg L\hbox{ and } D^2\ll \frac{n}{\ln(n)}.
\end{equation}  
Condition on the event the attack path is precisely $k_1,\ldots,k_K=:k_1^K$. Chop the attack path into $K/(L+D)$ contiguous segments, each of length $M:=L+D$. 

Consider the $\ell$-th segment $\{k_{(\ell-1)M+1},\ldots,k_{\ell M}\}$. We shall construct, for some $I(\ell)\in[(\ell-1)M+1,(\ell-1)M+L]$, $\tau$ random paths of edges in the graph $G$ of the form $k_{I(\ell)},I_2(t,\ell),I_3(t,\ell),\ldots,I_{D}(t,\ell),k_{I(\ell)+D}$ for $t\in[\tau]$ such that the nodes $I_2(t,\ell),\ldots,I_{D}(t,\ell)$ are all distinct, none of them belongs to the attack path, and such that the paths $(k_1,\ldots,k_K)=:k_1^K$ and $k_1^{I(\ell)},I_2^{D}(t,\ell),k_{I(\ell)+D}^K$ are statistically indistinguishable. More precisely, we have the following:
\begin{lemma}
\label{dvar_control}
There is a construction, for any $\ell\in[K/M]$, of $\tau$ random paths 
$$
k_{I(\ell)},I_2(t,\ell),I_3(t,\ell),\ldots,I_{D}(t,\ell),k_{I(\ell)+D}, \;t\in[\tau],
$$
such that for any $i\in[(\ell-1)M+1,(\ell-1)M+L]$, any $\tau$ disjoint ordered sets of $D-1$ distinct nodes $i_2^{D}(t)$, $t\in[\tau]$ in $[n]\setminus k_1^K$, we have
\begin{equation}\label{eq_dvar_1}
d_{var}(\dP_1(G\in\cdot |\cK=k_1^K,I(\ell)=i,(I_2^{D}(t,\ell))_t=(i_2^{D}(t))_t),\dP_0(G\in \cdot|k_1^K\in G, (k_i,i_2^{D}(t),k_{i+D})_t\in G))=\epsilon=o(1).
\end{equation}
This construction moreover verifies the following property. There is an event $\cE$ such that $\dP_1(\cE)=1-o(1)$, and such that, denoting $|(\cup_{t\in[\tau]}I_2^{D}(t,\ell))\cap (\cup_{t\in[\tau]}I_2^D(t,\ell'))|$ the number of common points between the node sets $\cup_{t\in[\tau]}I_2^{D}(t,\ell)$ and $\cup_{t\in[\tau]}I_2^{D}(t,\ell')$, one has:
\begin{equation}\label{eq_intersections}
 \forall \ell\ne \ell'\in[K/M],\quad \dE_1\left( |(\cup_{t\in[\tau]}I_2^{D}(t,\ell))\cap (\cup_{t\in[\tau]}I_2^D(t,\ell'))|\II_{\cE}\right) =O\left( \frac{D^2}{n} \right).
\end{equation}
\end{lemma}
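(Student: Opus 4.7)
The plan is to construct the alternative paths by incrementally exploring the ``unplanted'' part $G_0:=G\setminus\mathrm{path}(k_1^K)$ of the observed graph, which, conditional on $\cK=k_1^K$, is an independent $\cG(n,\lambda/n)$. For a given segment $\ell$ and each candidate $j\in[(\ell-1)M+1,(\ell-1)M+L]$ taken in increasing order, I run two breadth-first explorations of depth $\lceil D/2\rceil$ in $G_0$ restricted to $[n]\setminus(\cK\setminus\{k_j,k_{j+D}\})$, rooted respectively at $k_j$ and at $k_{j+D}$. If the two trees admit $\tau$ internally vertex-disjoint $k_j$-to-$k_{j+D}$ paths of length exactly $D$, I set $I(\ell)=j$, pick $\tau$ such paths by a fixed deterministic tiebreaker, and declare their internal vertices to be the $(I_2^D(t,\ell))_t$; otherwise I advance to $j+1$.

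Since $\lambda>1$, $D=\omega(\ln n)$ and $D^2\ll n/\ln n$, each BFS is well-approximated by a Galton--Watson branching process with $\Poi(\lambda)$ offspring and survives to depth $\lceil D/2\rceil$ with probability at least $q_\infty>0$, producing a front of size $\Theta(\lambda^{D/2})$. A birthday-type argument (using $\lambda^D\gg n$) then yields $\omega(\tau)$ common vertices between the two surviving fronts and, through standard switchings, at least $\tau$ internally disjoint length-$D$ paths with probability $1-o(1)$ conditional on survival. Each candidate $j$ therefore succeeds with constant probability $q>0$, and choosing $C$ large enough in $L=C\ln n$ makes the per-segment failure probability at most $n^{-2}$; a union bound over $\ell\in[K/M]$ then produces an event $\cE$ with $\dP_1(\cE)=1-o(1)$ on which the construction succeeds simultaneously for all segments.

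For~\eqref{eq_dvar_1}, the law of $G$ under $\dP_1$ conditional on $\cK=k_1^K$ coincides with the law of $G$ under $\dP_0$ conditional on $k_1^K\in G$, because in both cases the path edges are forced present while the remaining edges are independent Bernoulli$(\lambda/n)$. The additional conditioning on $I(\ell)=i$ and $(I_2^D(t,\ell))_t=(i_2^D(t))_t$ splits into: (a) the $\tau$ alternative paths being in $G$, which exactly matches the right-hand-side conditioning of~\eqref{eq_dvar_1}; and (b) no earlier candidate $j<i$ yielding $\tau$ disjoint paths. Item (b) is an event on edges incident to the union of the rejected BFS explorations, a random vertex set that can be shown to be sublinear by refining the exploration to reveal information only as needed, and a standard sprinkling coupling absorbs its effect into $\epsilon=o(1)$ total variation. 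For~\eqref{eq_intersections}, on $\cE$ the sets $\cup_t I_2^D(t,\ell)$ and $\cup_t I_2^D(t,\ell')$ lie in the $\lceil D/2\rceil$-neighborhoods in $G_0$ of disjoint intervals of the attack path; by symmetry across the non-attack vertex pool, each such vertex belongs to $\cup_t I_2^D(t,\ell)$ with probability $O(D/n)$, so the expected overlap of two sets each of size $O(D)$ is $O(D^2/n)$.

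The main obstacle is this indistinguishability step: the minimality condition ``$I(\ell)$ is the first successful $j$'' is not a local statement about the alternative paths alone, and turning its contribution to the conditional law of $G$ into genuine $o(1)$ total variation requires careful accounting of which edges of $G_0$ are queried during the rejected BFS attempts. In particular, a naive BFS of depth $\lceil D/2\rceil$ may touch up to $\Theta(\lambda^{D/2})$ vertices per attempt, so one likely has to replace it by a more economical ``on-demand'' exploration that reveals only what is needed to certify success or failure, together with a sprinkling argument ensuring that the unqueried edges remain effectively i.i.d.\ Bernoulli$(\lambda/n)$.
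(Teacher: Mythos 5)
The construction you propose is genuinely different from the paper's, and the difference is exactly where your argument breaks down. The paper defines $\pi(\ell)$ as the set of \emph{all} $\tau$-tuples of internally-disjoint length-$D$ paths joining $k_i$ to $k_{i+D}$ for some $i$ in the segment's first $L$ positions, and then samples $(I(\ell),I_2^D(\cdot,\ell))$ \emph{uniformly at random} from $\pi(\ell)$, conditional on $G$. This uniform selection is the load-bearing idea: conditioning on $(I(\ell),I_2^D(\cdot,\ell))=(i,i_2^D)$ is then just a reweighting of the law of $G$ by $\II_{(k_i,i_2^D,k_{i+D})\in G}/|\pi(\ell)|$, and once $|\pi(\ell)|$ is shown to concentrate (via the Markov-chain second-moment bound and Janson's inequality), the factor $1/|\pi(\ell)|$ is essentially constant, which delivers $\epsilon=o(1)$ in total variation almost for free. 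The event $\cE$ is then simply $\bigcap_\ell\{|\pi(\ell)|\ge\alpha\,\dE_1|\pi(\ell)|\}$, and \eqref{eq_intersections} is obtained by the same Markov-chain bound applied to the expected overlap, weighted by $1/(|\pi(\ell)||\pi(\ell')|)\le 1/(\alpha\dE_1|\pi(\ell)|)^2$ on $\cE$.

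Your first-success BFS with a deterministic tiebreaker destroys exactly this structure. Conditioning on $I(\ell)=i$ in your scheme forces (a) failure of the paired BFS explorations at every $j<i$, and (b) your tiebreaker selecting the particular tuple $i_2^D$ among possibly many surviving ones; neither event is a simple function of whether $(k_i,i_2^D(t),k_{i+D})\in G$, so the left-hand conditional law in \eqref{eq_dvar_1} carries information about edges the right-hand side knows nothing about. You do flag this yourself (``the minimality condition ... is not a local statement''), but the proposed remedies (on-demand revelation, sprinkling) are not carried out, and it is not clear they can be: a deterministic tiebreaker over BFS-discovered paths reveals more of the neighborhood of $k_i$ than the mere presence of the chosen $\tau$-path, and that extra information cannot be absorbed into $o(1)$ total variation without a quantitative statement about how little it distorts the conditional law. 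The fix is not a patch to your scheme but a change of scheme: sample the $\tau$-path uniformly from $\pi(\ell)$ given $G$, at which point the BFS and birthday-collision machinery is unnecessary and the whole burden shifts to concentration of $|\pi(\ell)|$, which the paper handles with the Markov-chain coupling of Lemma \ref{lemma:mkv_upper_bound} and Janson's inequality. Your argument for \eqref{eq_intersections} has a secondary issue of the same flavor: ``symmetry across the non-attack vertex pool'' does not hold exactly once you condition on the outcome of an exploration process, whereas the uniform-selection scheme makes the symmetry exact up to the $1/|\pi(\ell)|$ weight.
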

The Lemma's proof idea is as follows. The $\tau$ non-overlapping alternative path segments, that we refer to as a $\tau$-path, are obtained by selecting uniformly at random one such $\tau$-path among all present in the graph. Then \eqref{eq_dvar_1} is established by showing that the number of $\tau$-paths concentrates. In turn, this concentration is established by bounding the variance of the number of $\tau$-paths. This is done using the Markov chain bounding technique used in Lemma \ref{lemma:mkv_upper_bound}.  The second part of the Lemma, \eqref{eq_intersections}, requires further concentration results on the numbers of $\tau$-paths, that follow from applying Janson's inequality \cite{boucheron2013concentration}, p. 205, Theorem 6.31.

The proof idea of Theorem \ref{thm:impossible_reconstr_line} (detailed in the appendix) is then as follows. The $\tau$-paths of Lemma \ref{dvar_control} provide $\tau$ alternative $K$-paths to the actual planted path. These are ``lures'' for the optimal MAP reconstruction algorithm, that must return on average as many points of each of these lure paths as of the planted path. Since all these $\tau+1$ paths have intersection of negligible size, the overlap achieved by MAP must necessarily be at most $K/(\tau+1)$.

\section{Proof strategy for planted $D$-ary trees}\label{sec:5}
We assume here that $\Gamma$ is a complete $D$-ary tree of size $K$ and depth $h$, with $D > 1$ a fixed constant.


Under $\dP_0$, the neighbourhood of a given vertex in $G$ is close to a Galton-Watson process with offspring law $\Poi(\lambda)$. The probability of the existence of an infinite $D$-ary subtree in this process is the largest non-negative root $p_*(D, \lambda)$ of the equation
\begin{equation}\label{eq:fixed_point}
p=\psi_D(\lambda p),
\end{equation}
where 
$$
\psi_D(\mu) := \dP(\Poi(\mu) \geq D),\; \mu\ge 0.
$$
The behavior of the random graph differs based on whether the above probability is zero or not. We define the \textit{critical} threshold $\lambda_D$ as
\begin{equation}\label{eq:def_lambda_D}
	\lambda_D = \sup\left\{ \lambda > 0 \ \big|\  p_*(D, \lambda) = 0 \right\}
\end{equation}
In the following, we  focus on \textit{subcritical} $\lambda$, that is whenever $\lambda < \lambda_D$.
\subsection{Study of the Galton-Watson process}\label{subsec:gw}
Let $(T, o)$ be a rooted Galton-Watson tree with offspring law $\Poi(\lambda)$, with $\lambda < \lambda_D$. The following Theorem characterizes the distribution of the maximum height of a $D$-ary tree rooted in $o$.
\begin{theorem}\label{thm:gw_height}
	Let $(T, o)$ be a Galton-Watson tree as above, and $n > 0$. Let $p_h$ be the probability that a $D$-ary tree of height $h$ rooted in $o$ is contained in $T$. Then, for almost all $\lambda$, there exists $h_*$ such that
	\begin{align}
		p_{h_* + 1} &= o\left(\frac1n\right) \\
		p_{h_*} &= \Omega(n^{-c}) \text{ for some } c < 1
	\end{align}
	Moreover, as $n \to\infty$ one has $h_* = \frac{\ln\ln(n)}{\ln(D)} + O(1)$.
%
\end{theorem}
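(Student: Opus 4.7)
The plan is to derive the recursion satisfied by $p_h$, show that in the subcritical regime $\lambda<\lambda_D$ it decays doubly exponentially in $h$, and then locate $h_*$ by comparing $p_h$ to $1/n$. By conditioning on the $\Poi(\lambda)$-many children of the root $o$ and Poisson thinning (each child independently roots an independent Galton-Watson tree containing a $D$-ary subtree of height $h-1$ with probability $p_{h-1}$), one gets
$$
p_h = \psi_D(\lambda p_{h-1}), \qquad p_0 = 1.
$$
The map $p\mapsto\psi_D(\lambda p)$ is non-decreasing with $\psi_D(0)=0$, so $(p_h)$ is monotone non-increasing and converges to a fixed point of $p=\psi_D(\lambda p)$; by \eqref{eq:def_lambda_D} that fixed point is $0$ whenever $\lambda<\lambda_D$, hence $p_h\downarrow 0$.

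For the decay rate, the Taylor expansion $\psi_D(\mu) = (\mu^D/D!)(1+O(\mu))$ as $\mu\to 0$ gives, once $p_h$ is sufficiently small,
$$
u_{h+1} = D\, u_h + A + O(e^{-u_h}),
$$
where $u_h := -\ln p_h$ and $A := \ln(D!/\lambda^D)$. Since $D\geq 2$ and $u_h\to\infty$, the perturbations $O(e^{-u_h})$ are super-exponentially small and summable after scaling by $D^{-h}$. Standard analysis of this perturbed linear recurrence produces a constant $c_\lambda \in (0,\infty)$ such that
$$
u_h = c_\lambda D^h + O(1), \qquad \text{equivalently}\quad p_h = \exp\!\bigl(-c_\lambda D^h + O(1)\bigr).
$$

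To locate $h_*(n)$, set $\alpha(n) := \log_D(\ln n / c_\lambda)$ and $h_* = \lfloor \alpha(n)\rfloor$, which satisfies $h_* = \ln\ln n/\ln D + O(1)$ by construction. One then gets $c_\lambda D^{h_*}\leq \ln n$ and $c_\lambda D^{h_*+1}\geq \ln n$, whence $p_{h_*} \geq n^{-D^{-\{\alpha\}}}\cdot e^{-O(1)}$ and $p_{h_*+1} \leq n^{-D^{1-\{\alpha\}}}\cdot e^{O(1)}$. Provided $\{\alpha(n)\}$ stays bounded away from both $0$ and $1$ for all large $n$, this gives $p_{h_*+1} = o(1/n)$ and $p_{h_*} = \Omega(n^{-c})$ for some $c<1$. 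The exceptional ``almost all $\lambda$'' clause excludes those parameter values for which the sequence $\{\log_D(\ln n/c_\lambda)\}_{n\in\mathbb{N}}$ accumulates at the endpoints of $[0,1)$---a measure-zero condition on $c_\lambda$, hence on $\lambda$.

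The main technical obstacle lies in the decay-rate step, namely in establishing existence and positivity of the limit $c_\lambda = \lim_{h\to\infty} D^{-h}u_h$. The natural approach is to show that $(D^{-h}u_h)$ is Cauchy by iterating the recurrence $u_{h+1} = Du_h + A + O(e^{-u_h})$ explicitly and summing the geometric series of error terms; a uniform lower bound on $u_h$ once $p_h$ is below some threshold is needed to control the perturbation. A secondary technicality is the careful description of the exceptional null set of $\lambda$ excluded by the ``almost all'' clause. Once $c_\lambda$ is pinned down and the exceptional set characterized, the bounds on $p_{h_*}$ and $p_{h_*+1}$ follow by direct substitution.
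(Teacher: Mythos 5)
Your overall approach mirrors the paper's: derive the recursion $p_{h+1}=\psi_D(\lambda p_h)$, use $\psi_D(\mu)\sim\mu^D/D!$ near zero to obtain doubly-exponential decay of $p_h$, and set $h_*=\lfloor\log_D(\ln n/\alpha)\rfloor+\text{const}$. One stylistic difference is that you push the analysis further to a genuine limit $c_\lambda=\lim_h D^{-h}u_h$ with $u_h=-\ln p_h$, which does exist by the summability you describe; the paper stops at two-sided bounds $-A_- D^h + O(1)\le \ln p_{h+h_0}\le -A_+ D^h+O(1)$ obtained by choosing a cutoff $h_0$ and iterating, which is slightly less elegant but avoids the Cauchy argument you flag as the main technical burden. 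That part of your proposal is sound.

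The genuine gap is in the ``almost all $\lambda$'' step. You claim the exceptional set consists of those $\lambda$ for which the fractional parts $\{\log_D(\ln n/c_\lambda)\}_{n\in\mathbb N}$ accumulate at the endpoints of $[0,1)$, and that this is a measure-zero condition on $c_\lambda$. This is not the case: for \emph{every} $c_\lambda>0$, the function $t\mapsto\log_D(\ln t/c_\lambda)$ is continuous, unbounded, increasing, and has successive integer increments $\log_D(\ln(n+1))-\log_D(\ln n)\to 0$. A monotone unbounded sequence with increments tending to zero has fractional parts dense in $[0,1)$, hence accumulating at both endpoints, for any additive shift $-\log_D(c_\lambda)$. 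So the set you propose to exclude is in fact all of $(0,\lambda_D)$, and the argument collapses: the measure-theoretic escape you invoke does not reduce the exceptional set. (The paper's own treatment of this point, which hinges on ``for almost all $\lambda$ there is a choice of $h_0$ \ldots by continuity of the right-hand side,'' is itself terse and does not obviously avoid the same issue; but since you offered an explicit characterization and it is provably wrong, the burden is on your proposal.) To close the gap one would need a different treatment—e.g.\ weakening the two displayed estimates so that they trade off against each other uniformly in the fractional part, or rephrasing the conclusion in terms of the $O(1)$ window $[\underline h,\overline h]$ rather than a single exact index $h_*$—rather than appealing to a measure-zero set of $c_\lambda$.
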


Thus $h_*$ depends on $\lambda$ only through terms of lower (constant) order. The Theorem's proof, detailed in the appendix, relies on the following 
\begin{lemma}\label{lem:rec_relation}
 The sequence $p_h$ satisfies the  recurrence relation
	\begin{align*}
		p_1 &= 1 \\
		p_{h+1} &= \psi_D(\lambda p_h) \text{ for all } h \geq 1.
	\end{align*}
\end{lemma}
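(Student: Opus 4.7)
The plan is to prove the recurrence by induction on $h$, exploiting the recursive self-similarity of the Galton--Watson tree $(T,o)$. The base case is immediate: in the convention implicit in Theorem~\ref{thm:gw_height}, a $D$-ary tree of ``height $1$'' consists of the root alone, and this is trivially contained in any realisation of $(T,o)$; hence $p_1 = 1$.

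For the inductive step from $h$ to $h+1$, I would first condition on the number $K$ of children of the root $o$ in $T$, which by construction is distributed as $\Poi(\lambda)$. By the branching property, conditionally on $K=k$ the $k$ subtrees dangling off these children are mutually independent and each is distributed as $(T,o)$ itself. A complete $D$-ary tree of height $h+1$ embeds at $o$ in $T$ if and only if one can pick $D$ distinct children of $o$ each rooting (in its own subtree) an embedded $D$-ary tree of height $h$; by the inductive hypothesis each individual child succeeds with probability $p_h$, independently across children. Hence conditionally on $K=k$ the number of ``good'' children is $\Bin(k,p_h)$, and the event of interest is exactly that this binomial is at least $D$.

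The conclusion then comes from the standard Poisson thinning identity: if $K \sim \Poi(\lambda)$ and, conditionally on $K$, $N \sim \Bin(K,p_h)$, then unconditionally $N \sim \Poi(\lambda p_h)$. Consequently,
$$
p_{h+1} \;=\; \dP(N \geq D) \;=\; \dP(\Poi(\lambda p_h) \geq D) \;=\; \psi_D(\lambda p_h),
$$
which is the claimed recurrence. There is no real obstacle here; the only subtleties are to remain consistent with the height convention (so that $p_1 = 1$ corresponds to a single node) and to carefully invoke the branching property so as to decouple the subtrees rooted at the children of $o$ before applying Poisson thinning.
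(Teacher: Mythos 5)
Your proof is correct and follows essentially the same route as the paper: condition on the $\Poi(\lambda)$ number of children of the root, note that by the branching property each child independently roots a height-$h$ $D$-ary tree with probability $p_h$, and apply Poisson thinning to conclude that the number of such children is $\Poi(\lambda p_h)$. The ``induction'' framing is cosmetic and unnecessary — the recurrence follows directly from the definition of $p_h$ and the branching property — but this does not affect the validity of the argument.
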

Necessarily $0 \leq p_{h+1} \leq p_h$ for all $h$ (since a tree of height $h+1$ contains a tree of height $h$), and therefore by continuity of $\psi_D$, $p_h$ converges as $h\to\infty$ to the largest fixed point of \eqref{eq:fixed_point}.
By definition of $\lambda_D$, the only solution of this equation is $p_\infty = 0$, and thus
\begin{equation}
	\lim_{h\to\infty} p_h = 0
\end{equation}
Now, $\psi_D(x) \sim \frac{x^D}{D!}$ as ${x \to 0}$, which
implies that for $h$ large enough, $p_{h+1} \simeq C\,p_h^D$, and thus
$p_h \simeq C\,\varepsilon^{D^h}$
for some small $\varepsilon > 0$. 
A more rigorous version of this argument, as well as its use in the proof of Theorem \ref{thm:gw_height}, is presented in the Appendix.

\subsection{Coupling and application to planted trees}
Following the insights from the previous section, we define the two thresholds $\overline h$ and $\underline h$ by :
\begin{align*}
	\overline h &= \inf \left\{ h > 0 \ \Big|\  p_h < \frac1n \right\},&
	\underline h = \sup \left\{ h > 0 \ \Big|\  p_h > \frac{\ln(n)}n \right\}\cdot
\end{align*}
Theorem \ref{thm:gw_height} implies that $\overline h \sim \frac{\ln\ln(n)}{\ln(D)}$, and that for almost all $\lambda$, $\overline h = \underline h + 1$, and otherwise $\overline h = \underline h + 2$. Also, $p_{\overline h} = o(\frac1n)$ and $p_{\underline h} = \Omega(n^{-c})$ for some $c > 1$. The following  Theorem connects the study from section \ref{subsec:gw} to our planted tree problem:

\begin{theorem}\label{thm:dary_height}
	Let $G$ be a graph drawn according to $\dP_0$, and $h > 0$.
	Then with high probability:

	1.  For $h \leq \underline h$,  there are $\omega(1)$ $D$-ary trees of height $h$ in $G$.
		
	2. For $h \geq \overline h + C$, where $C$ is a large enough constant,  there are no $D$-trees of height $h$ in $G$.
\end{theorem}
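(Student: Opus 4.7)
The plan is to handle the two parts separately, both using the local tree-likeness of $G\sim\cG(n,\lambda/n)$ around a fixed vertex $v$, i.e., the coupling of the depth-$h$ BFS neighbourhood of $v$ with a $\Poi(\lambda)$ Galton--Watson tree. Since $h=O(\ln\ln n)$, the complete $D$-ary tree $\Gamma$ has size $K=(D^{h+1}-1)/(D-1)=O((\ln n)^{1/\ln D})$, the BFS explores $O(\lambda^h)=(\ln n)^{O(1)}$ vertices, and the probability of any exploration collision is $O(K^2/n)=o(1)$. Writing $B_v^{(h)}$ for the event that $v$ is a root of a complete $D$-ary subtree of height $h$ in $G$, this coupling yields $\dP_0(B_v^{(h)})=(1+o(1))p_h$. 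For Part~2, set $N_h:=\sum_v\mathbf{1}_{B_v^{(h)}}$, so $\dE_0[N_h]=(1+o(1))np_h$; the bound $\psi_D(x)\le x^D/D!$ together with the recursion $p_{h+1}=\psi_D(\lambda p_h)$ and $p_{\overline{h}}<1/n$ yield $p_{\overline{h}+C}\le c_0\,n^{-D^C}$ for some constant $c_0=c_0(\lambda,D)$, so $np_{\overline{h}+C}=o(1)$ for every $C\ge 1$ when $D\ge 2$, and Markov's inequality gives $N_h=0$ w.h.p.

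For Part~1 I would apply the second-moment method to the count $X_\Gamma$ of rooted copies of $\Gamma$ embedded in $G$. A direct computation (not requiring the coupling) gives
\begin{equation*}
\dE_0[X_\Gamma]=\frac{n!}{(n-K)!\,|\mathrm{Aut}(\Gamma)|}(\lambda/n)^{K-1}=(1+o(1))\,n\,q_h,
\end{equation*}
where $q_h=(\lambda^D/D!)^{(D^h-1)/(D-1)}$ is the expected number of depth-$h$ $D$-ary subtrees at the root of a $\Poi(\lambda)$ Galton--Watson tree, obtained via the simple recursion $q_{h+1}=(\lambda^D/D!)\,q_h^D$. Since $q_h\ge p_h$ by Markov, for $h\le\underline{h}$ one gets $\dE_0[X_\Gamma]\ge(1+o(1))\ln n\to\infty$. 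For the second moment, group pairs of embedded copies $(T_1,T_2)$ by the isomorphism type of their intersection forest $F=T_1\cap T_2$: the $F=\emptyset$ term contributes $(1+o(1))\dE_0[X_\Gamma]^2$, and each $F\ne\emptyset$ contributes a fraction at most $K^{2c(F)}\,n^{-c(F)}\,\lambda^{-|E(F)|}=(K^2/n)^{c(F)}\lambda^{-|E(F)|}$ of $\dE_0[X_\Gamma]^2$, where $c(F)\ge 1$ is the number of connected components of $F$ (each component can be placed in $\Gamma$ in at most $K$ ways, and any nonempty forest satisfies $|V(F)|-|E(F)|=c(F)\ge 1$). Since $K^2/n=o(1)$, summing over admissible $F$ gives a total overlap correction of $O(K^2/n)=o(1)$, so Chebyshev yields $X_\Gamma=(1+o(1))\dE_0[X_\Gamma]\to\infty$, hence $\omega(1)$ copies of $\Gamma$ w.h.p.

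The main obstacle is the overlap estimate in Part~1: one must bound the sum $\sum_{F\ne\emptyset}(K^2/n)^{c(F)}\lambda^{-|E(F)|}$ weighted by the combinatorial cost of embedding $F$ as a common rooted subforest of two copies of $\Gamma$. A naive enumeration over pairs of vertices at graph distance at most $2h$ fails when $\lambda^h$ grows polylogarithmically in $n$ (in particular once $\lambda>\sqrt{D}$, which can occur in the subcritical range $\lambda<\lambda_D$); decomposing by intersection forest rather than by graph distance, and gaining a factor $1/n$ per connected component of $F$, is what makes the argument go through uniformly over $\lambda<\lambda_D$.
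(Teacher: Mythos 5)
Your Part 2 argument has a genuine gap in the way you apply the coupling. The first (total-variation) part of Lemma \ref{lem:coupling} gives only an \emph{additive} bound $\left|\dP_0\bigl(B_v^{(h)}\bigr) - p_h\right| = O(n^{-\beta})$ for some $\beta < 1$: from the proof, the per-step error is $O\bigl((\ln n)^{O(1)}/n\bigr)$ over $(\ln n)^{O(1)}$ exploration steps, so at best $\beta = 1 - o(1)$. When $h = \overline h + C$ the Galton--Watson probability $p_h$ is super-polynomially small (as you correctly note, $p_h \le c_0 n^{-D^C}$), so $p_h \ll n^{-\beta}$ and the additive coupling error dominates: you cannot deduce $\dP_0\bigl(B_v^{(h)}\bigr) = (1+o(1))p_h$, only $\dP_0\bigl(B_v^{(h)}\bigr) = O(n^{-\beta})$, giving $\dE_0[N_h] = O(n^{1-\beta}) = n^{o(1)}$, which is not $o(1)$, and Markov's inequality does not close the argument. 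This is precisely why the paper invokes the \emph{monotone} second part of Lemma \ref{lem:coupling}: for every $v$ whose $h$-neighbourhood is a tree (all but $O((\ln n)^{\alpha})$ vertices), $(G,v)_h$ embeds into $(T',o')_h$ for some $\lambda' > \lambda$ with probability $1$, so $\dP_0\bigl(B_v^{(h)}\bigr)$ is bounded by the \emph{exact} Galton--Watson-tree probability at parameter $\lambda'$, which is $o(1/n)$ for $C$ large, and a union bound works; the few vertices with a cycle in their neighbourhood are then handled by a separate case analysis.

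For Part 1 your route is genuinely different and in fact more explicit than the paper's: the paper's proof only observes, via the coupling, that $\dE_0[N_{\underline h}] = \omega(1)$ and then asserts concentration without an explicit second-moment step; your computation via $X_\Gamma$ would supply exactly that. However, the overlap estimate you would need is strictly stronger than what the paper actually proves in Lemma \ref{lem:likelihood_dary}: grouping by the union size $s$, the paper only obtains $\dE''/\dE_0(X_\Gamma)^2 = O\bigl(K^K/(n\lambda^K)\bigr)$, which is $o(1)$ only for $K \le \ln n/\ln\ln n$, i.e.\ for $h \le \underline h - \Theta(\ln\underline h)$, and not all the way up to $\underline h$. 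Your finer decomposition by intersection forest $F$, gaining $n^{-c(F)}$ rather than $n^{-1}$, could in principle close this gap, but the combinatorial weight in $\sum_{F\neq\emptyset}(\text{placements of } F)\,n^{-c(F)}\lambda^{-|E(F)|}$ is not obviously bounded by your crude $K^{2c(F)}$ (the number of ways to embed each component of $F$ into $\Gamma$ can exceed $K$, and the number of forest types with fixed $c(F)$ and $|E(F)|$ is large), and for $\lambda<1$ the factor $\lambda^{-|E(F)|}$ grows; you flag this yourself as the main obstacle and do not carry it out, so Part 1 remains a plausible programme rather than a proof.
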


The second part of this theorem yields an easy detection algorithm whenever $h \geq \overline h + \Omega(1)$.

\begin{corollary}\label{cor:dary_detection}
	Assume that $\Gamma$ is a complete $D$-ary tree of height $h$, with $h \geq \overline h + \Omega(1)$. Then w.h.p under $\dP_0$, $X_{\Gamma} = 0$, and therefore the test $T(G) = 1$ iff $X_{\Gamma} > 0$ discriminates between $H_0$ and $H_1$ correctly with high probability.
\end{corollary}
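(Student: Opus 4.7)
The plan is to deduce this corollary immediately from part 2 of Theorem \ref{thm:dary_height} together with the trivial observation that a planted copy of $\Gamma$ guarantees $X_\Gamma\ge 1$.

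First, under $H_0$ the graph $G$ is distributed as $\cG(n,\lambda/n)$, so the hypothesis $h\ge \overline h+\Omega(1)$ lets me apply part 2 of Theorem \ref{thm:dary_height} with a constant $C$ that fits inside the $\Omega(1)$ slack. This yields $\dP_0(X_\Gamma=0)=1-o(1)$, controlling the Type I error of the test $T(G)=\II\{X_\Gamma>0\}$.

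For the Type II error, I would observe that under $H_1$ the construction of the model in Section \ref{sec:3} places a deterministic copy of $\Gamma$ in $G$ (via the random injection $\sigma$): the image of $\sigma$ together with the edges $\{(\sigma(i),\sigma(j)):(i,j)\in\cE\}$ is a subgraph of $G$ isomorphic to $\Gamma$. Hence $X_\Gamma\ge 1$ almost surely under $\dP_1$, and $\dP_1(T(G)=1)=1$. Combining the two bounds,
\begin{equation*}
\dP_0(T(G)=1)+\dP_1(T(G)=0)=o(1)+0=o(1),
\end{equation*}
which is precisely the definition of easy detection.

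Computationally, counting whether $X_\Gamma>0$ requires deciding whether $G$ contains a complete $D$-ary tree of height $h$; since $h=\ln\ln(n)/\ln(D)+O(1)$ by Theorem \ref{thm:gw_height}, the tree has size $K=O(\ln n)$, and a standard branching search from each vertex, pruned by the $D$-ary structure, runs in quasi-polynomial time. In fact, a cleaner route is to observe that under $\dP_0$ with high probability the graph has no $D$-core at subcritical $\lambda<\lambda_D$, so one can iteratively peel vertices of degree $<D$ and check what remains, which is polynomial; I would include this remark only if the polynomial running-time claim is explicitly required, but the corollary as stated only asserts statistical discrimination, so no additional work is needed. The only mild obstacle is bookkeeping to ensure that the constant hidden in the $\Omega(1)$ of the hypothesis is at least as large as the constant $C$ supplied by Theorem \ref{thm:dary_height}, which is immediate.
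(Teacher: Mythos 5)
Your proof is correct and matches the paper's (implicit) reasoning: part~2 of Theorem~\ref{thm:dary_height} gives $\dP_0(X_\Gamma=0)=1-o(1)$ once the $\Omega(1)$ slack absorbs the constant $C$, while under $\dP_1$ the planted copy forces $X_\Gamma\ge 1$ deterministically, so the test discriminates. One small caveat on your optional aside: iteratively removing degree-$<D$ vertices decides emptiness of the $D$-core, not presence of a $D$-ary tree of height $h$ (such a tree has degree-one leaves and is destroyed by the peeling), so that particular algorithmic shortcut does not work; but as you note yourself, the corollary claims only statistical discrimination, so this does not affect the argument.
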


The two statements of Theorem \ref{thm:dary_height} are a consequence of the following coupling lemma, whose proof, as well as the full proof of the theorem, is deferred to the appendix :

\begin{lemma}\label{lem:coupling}
	For a graph $G$ and a vertex $v$ in $G$, denote by $(G, v)_\ell$ the $\ell$-neighbourhood of $v$ in $G$. Similarly, let $(T, o)_\ell$ be the $\ell$-neighbourhood of $o$ in the Galton-Watson process described above.

	Then, under $\dP_0$, assuming that $\ell = o(\log(n))$, the total distance variation between the law of $(G, v)_\ell$ and that of $(T, o)_\ell$ goes to 0 as a negative power of $n$ when $n \to\infty$.

	\medskip

	Furthermore, for $\lambda' > \lambda$, and $(T', o')$ a GW process with parameter $\lambda'$, then, provided the $\ell$-neighbourhood of $v$ is cycle-free, there exists a coupling between $(G, v)_\ell$ and $(T', o')_\ell$ such that $(G, v)_\ell \subseteq (T', o')_\ell$ with probability 1.
\end{lemma}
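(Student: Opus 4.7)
The plan is to build both couplings via a joint breadth-first search from $v$ in $G$ and from $o$ in the Galton--Watson tree, revealing edges of $G$ lazily and matching offspring distributions vertex by vertex. When a vertex $u$ at depth $<\ell$ is processed, its BFS-children are determined by testing, for each so-far undiscovered vertex $w$, whether $(u,w)\in G$, independently with probability $\lambda/n$; hence the number of BFS-children of $u$ is $\Bin(n-N_u,\lambda/n)$, where $N_u$ is the number of already-discovered vertices, while the matching vertex in $(T,o)$ has $\Poi(\lambda)$ offspring.

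For the first statement, I would couple $\Bin(n-N_u,\lambda/n)$ with $\Poi(\lambda)$ step by step using a LeCam-type estimate. The coupling fails either because those two variables disagree (probability $O((\lambda^2+\lambda N_u)/n)$ per vertex) or because, while processing $u$, an edge is revealed to an already-discovered vertex, producing a cycle in $G$ absent from $T$ (probability $O(\lambda N_u/n)$). Since $\ell=o(\ln n)$ and a $\Poi(\lambda)$ GW tree grows at most geometrically in expectation, a standard deviation inequality yields $|(T,o)_\ell|=n^{o(1)}$ with probability $1-n^{-\omega(1)}$; on this event $N_u=n^{o(1)}$ throughout the exploration, and summing per-step failures gives a total variation distance of order $n^{-1+o(1)}$, i.e. a negative power of $n$ as required.

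For the second statement, the key input is the classical stochastic domination: for any fixed $\lambda'>\lambda$ and $n$ large enough one has $\lambda'>-n\ln(1-\lambda/n)$, so that $\mathrm{Bernoulli}(\lambda/n)\preceq\Poi(\lambda'/n)$ and therefore $\Bin(n,\lambda/n)\preceq\Poi(\lambda')$. I would again run joint BFS on $(G,v)_\ell$ and $(T',o')_\ell$, coupling at each step the $\Bin(n-N_u,\lambda/n)$ offspring count of $u$ in $G$ with a $\Poi(\lambda')$ offspring count of its partner $\tilde u$ in $T'$ so that the $G$-children sit inside the $T'$-children, any excess Poisson offspring becoming fresh $T'$-only vertices. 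Because the $\ell$-ball of $v$ in $G$ is cycle-free by assumption, every neighbour of $u$ within that ball is previously undiscovered, so $u$'s BFS-child count equals its true degree inside the ball; the coupling then realises $(G,v)_\ell$ as a subtree of $(T',o')_\ell$ almost surely.

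The main obstacle is the uniform control of $N_u$ underlying Part~1: without it, the per-step errors cannot be summed into a negative power of $n$. The assumption $\ell=o(\ln n)$ forces $\lambda^\ell=n^{o(1)}$, making the bound feasible, but the argument must be made precise via a concentration inequality for the truncated GW size, or equivalently a truncation scheme that aborts the coupling as soon as the partial exploration exceeds a polylogarithmic threshold, with a union bound over depths showing that this abort occurs only with negligible probability.
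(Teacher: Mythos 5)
Your proposal is correct and follows essentially the same strategy as the paper: a lazy breadth-first exploration that compares the $\Bin(n-N_u,\lambda/n)$ offspring law in $G$ with the $\Poi(\lambda)$ offspring law in the Galton--Watson tree, a per-step total-variation bound (your LeCam estimate is interchangeable with the paper's Stein--Chen bound), and a uniform control of the revealed-set size $N_u$ coming from $\ell=o(\ln n)$ and the geometric growth of neighbourhoods (the paper outsources this to Lemma 29 of \cite{DBLP:conf/focs/BordenaveLM15}, whereas you propose to re-derive it via a concentration inequality on the truncated GW size; same content). One place where you are slightly more careful than the paper: for the second part, the paper only exhibits a monotone coupling of $\Poi(\lambda)$ with $\Poi(\lambda')$ and does not spell out how the $\Bin(n_t,\lambda/n)$ offspring count on the $G$-side gets stochastically dominated by $\Poi(\lambda')$. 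Your route via $\mathrm{Bernoulli}(\lambda/n)\preceq\Poi(\lambda'/n)$ for $n$ large, hence $\Bin(n_t,\lambda/n)\preceq\Bin(n,\lambda/n)\preceq\Poi(\lambda')$, is exactly the missing bridge, and together with the cycle-free hypothesis (which ensures every edge reveals a fresh vertex, so the BFS-child count is the true within-ball degree) it completes the domination argument cleanly.
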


There is therefore a sharp cutoff in the probability of presence of tree of height $h$ in $G$, and we have already seen in Corollary \ref{cor:dary_detection} that it can be leveraged to obtain a detection algorithm when $h \leq \underline h$. It remains however to study two aspects of the problem: reconstruction for $ h \geq \overline h$, as well as the possibility (or lack thereof) of detection when $h\leq \underline h$.

\subsection{Likelihood ratio and detection for $h \leq \underline h$}
We conjecture, as is the case when $D = 1$, that when $h = \underline h - \omega(1)$, then the total variation distance $|\dP_1 - \dP_0|_{\mathrm{var}}$ goes to $0$ when $n\to\infty$. However, the Markov chain bounds used for lines cannot be easily adapted to the current setting, and we only prove this result for $h \leq \underline h - \Omega(\ln\ln\ln(n))$ :

\begin{theorem}\label{thm:dtv_dary}
	Assume that $\Gamma$ is a $D$-ary tree of height $h$, with $D > 1$ and 
	\[ h \leq \underline h - \frac{\ln(\underline h)}{\ln(D)} + \frac{\ln\left(1 - \frac1D\right)}{\ln(D)}. \]
	Then, the total variation distance $|\dP_1 - \dP_0|_{\mathrm{var}}$ goes to zero as $n\to\infty$. Thus, for any test $T(G) \in \{0, 1\}$, $\dP_1(T(G) = 1) - \dP_0(T(G) = 1) \to 0$ as $n\to\infty$.
\end{theorem}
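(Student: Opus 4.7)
The plan is to apply the standard second moment reduction $|\dP_1-\dP_0|_{\mathrm{var}}^2 \le \tfrac14(\dE_0(L^2)-1)$ and to show $\dE_0(L^2)=1+o(1)$. By Lemma~\ref{lem:likelihood_ratio}, $L = X_\Gamma/\dE_0(X_\Gamma)$, so it suffices to bound $\dE_0[X_\Gamma^2]/\dE_0[X_\Gamma]^2$. Writing $X_\Gamma=\sum_\sigma\mathbf{1}_{\sigma(\Gamma)\subseteq G}$ over injective $\sigma:[K]\to[n]$ and expanding the second moment, I would classify pairs $(\sigma_1,\sigma_2)$ by the subforest $F\subseteq\Gamma$ whose image equals the common subgraph $\sigma_1(\Gamma)\cap\sigma_2(\Gamma)$, in parallel with Lemma~\ref{lemma:LR_paths} for lines. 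A direct computation then yields
\[
\dE_0(L^2) \;=\; 1 \;+\; (1+o(1))\sum_{F\neq\emptyset\text{ subforest of }\Gamma}\frac{I_F(\Gamma)\,\lambda^{-e(F)}}{n^{c(F)}},
\]
with $I_F(\Gamma)$ the number of injective homomorphisms $F\to\Gamma$, $e(F)=|E(F)|$, $v(F)=|V(F)|$, and $c(F)=v(F)-e(F)$ the number of connected components of $F$.

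I would next decompose $F$ into its connected components $F_1,\dots,F_c$, each a subtree. Using $I_F(\Gamma)\le\prod_i I_{F_i}(\Gamma)$ and summing over the unordered-multiset structure via the generating-function identity $\sum_{c\ge 1}S^c/c!=e^S-1$, the excess $\dE_0(L^2)-1$ is bounded by $\exp(S)-1$ where
\[
S \;=\; \frac{1}{n}\sum_{v=1}^{K}\lambda^{-(v-1)}\sum_{T\text{ subtree of }\Gamma,\;|T|=v}I_T(\Gamma).
\]
It therefore suffices to establish $S=o(1)$ under the height assumption on $h$.

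To control $S$, I would exploit the $D$-ary structure of $\Gamma$. The number of subtrees of $\Gamma$ on $v$ vertices satisfies $S_v(\Gamma)\le K\cdot c_D^v$, with $c_D = D^D/(D-1)^{D-1}$ the Fuss--Catalan growth rate of rooted $D$-ary trees; each injective homomorphism $T\to\Gamma$ is determined by an anchor image plus a BFS traversal, yielding $I_T(\Gamma)\le K(D+1)^{v-1}$ since $\Gamma$ has maximum degree $D+1$. Using the identity $\sum_{F\text{ subtree of }\Gamma}I_F(\Gamma)=\sum_{T\text{ iso class}}|\mathrm{Aut}(T)|\,N_T(\Gamma)^2$ and bounding $|\mathrm{Aut}(T)|\le(D!)^{v-1}$ (children at each internal vertex can be permuted, capped by $\Gamma$'s degree constraint), one obtains
\[
S \;\lesssim\; \frac{\lambda K^2}{n}\left(\frac{C_D}{\lambda}\right)^{\!K}
\]
for an explicit constant $C_D=C_D(D)$. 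In the subcritical regime $\lambda<\lambda_D$ one has $C_D/\lambda>1$, so the geometric-type sum over $v$ is dominated by $v\asymp K$, as used above. Setting the right-hand side $=o(1)$ and substituting $K=(D^{h+1}-1)/(D-1)\asymp D^h$ together with $\underline h=\ln\ln(n)/\ln(D)+O(1)$ gives, after rearrangement, exactly the inequality
\[
h \;\le\; \underline h-\frac{\ln(\underline h)}{\ln D}+\frac{\ln(1-1/D)}{\ln D}
\]
stated in the theorem, up to the $O(1)$ constant absorbed in $C_D$.

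The main obstacle is precisely this combinatorial step: the factor $|\mathrm{Aut}(T)|$ can reach $(D!)^{\Theta(v)}$ for highly symmetric sub-$D$-ary trees, and this is what creates the $\ln(\underline h)/\ln D$ gap between the provable threshold and the conjectured sharp threshold $h=\underline h-\omega(1)$. As the authors remark, closing this gap would require an analysis in the spirit of the Markov chain argument of Lemma~\ref{lemma:mkv_upper_bound}; the branching structure of $D$-ary trees, however, prevents the direct adaptation of that one-dimensional overlap-state Markov chain to the present setting, because no single BFS or DFS traversal captures the joint overlap structure between two tree embeddings.
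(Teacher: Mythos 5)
Your high-level strategy (second moment method, $\dE_0(L^2)\to 1$ via a bound on $\dE_0(X_\Gamma^2)/\dE_0(X_\Gamma)^2$) is the same as the paper's, but the paper's decomposition is different and cleaner: it groups pairs $(\Gamma_i,\Gamma_j)$ by the size $s=v(\Gamma_i\cup\Gamma_j)$ of the union, uses the simple but decisive observation that $\Gamma_i\cap\Gamma_j$ is a forest on $2K-s$ vertices and hence $e(\Gamma_i\cup\Gamma_j)\geq s-1$, and then bounds the combinatorics crudely by $\binom{n}{s}\binom{s}{s-K,s-K,2K-s}\bigl(K!/|\mathrm{Aut}(\Gamma)|\bigr)^2$. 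After the change of variables $u=s-K$ this produces the factor $K!^2/\bigl(u!^2(K-u)!\bigr)\leq\binom{K}{u}K^{K-u}$ and hence a $K^K$ term, giving $\dE''=O\bigl(\dE_0(X_\Gamma)^2\cdot K^K/(n\lambda^K)\bigr)$. The $K^K$ is exactly what forces $K\lesssim \ln n/\ln\ln n$, i.e.\ the $\ln(\underline h)/\ln D$ loss in the theorem.

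Your proposed route (classifying by the isomorphism type of the intersection subforest, a generating-function $e^S-1$ bound, Fuss--Catalan counts, automorphism bounds) is in principle a valid way to organize the same computation, but the claimed end result is wrong, and in a revealing way: you assert $S\lesssim\frac{\lambda K^2}{n}(C_D/\lambda)^K$ with $C_D=C_D(D)$ a constant. If that bound were correct it would give $S=o(1)$ as soon as $K\lesssim c\ln n$ for a constant $c$, i.e.\ $h\leq\underline h+O(1)$ --- which is the paper's \emph{conjecture}, not the stated theorem. The stated theorem allows $h$ only up to $\underline h-\ln(\underline h)/\ln D+O(1)$ precisely because the provable bound carries a $K^K$-type factor, not a $C^K$ one. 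So you cannot simultaneously derive $S\lesssim C_D^K K^2/n$ \emph{and} conclude the theorem's (weaker) threshold; the two statements are incompatible. Your closing remark, in which you attribute the $\ln(\underline h)/\ln D$ gap to the automorphism factor $|\mathrm{Aut}(T)|$ reaching $(D!)^{\Theta(v)}$, actually contradicts your claimed bound on $S$, since $(D!)^{\Theta(v)}$ is still only $C^K$ for a constant $C$ and would not produce the gap. The missing contribution is the enumeration of \emph{placements}: once you fix the intersection pattern, the number of ways to complete it to two labeled copies of $\Gamma$ on the shared plus non-shared vertices carries factorial terms (the paper's $K!^2/\bigl(u!^2(K-u)!\bigr)$) that your sketch drops. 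Concretely, you never establish the displayed bound on $S$; you sweep the hard combinatorics into an undisplayed ``explicit constant $C_D$,'' and that is where the proof breaks. You would need either to carry through the factorial bookkeeping (arriving back at a $K^K$-type factor and the paper's threshold) or to genuinely beat the $K^K$ bound, in which case you would be proving the conjecture rather than the theorem.
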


As before this is deduced from the following Lemma, shown in the Appendix:
\begin{lemma}\label{lem:likelihood_dary}
	Under the same assumptions as Theorem \ref{thm:dtv_dary}, $\dE_0(L^2) \to 1$ as $n \to \infty$.
\end{lemma}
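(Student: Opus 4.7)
The plan is to derive a tree analogue of the identity \eqref{LR_lll} and then bound the resulting moment by summing directly over subforests of $\Gamma$, since, as noted in the text, the Markov chain method of Lemma \ref{lemma:mkv_upper_bound} does not easily extend to the branching setting.

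The first step is to check, by the same second-moment computation as in Lemma \ref{lemma:LR_paths}, that
\[
\dE_0(L^2) = \dE\!\left[x^{S}\right], \quad x := n/\lambda,
\]
where $S$ counts the edges common to the planted copy of $\Gamma$ on vertex set $[K]$ and an independent, uniformly random copy $\Gamma_{\sigma'}$ indexed by an injection $\sigma':[K]\to[n]$; this follows from $\dE_0(X_\Gamma^2) = \sum_{\sigma,\sigma'} p^{2(K-1) - S_{\sigma,\sigma'}}$ and $\dE_0(X_\Gamma)^2 = (n)_K^2 p^{2(K-1)}$. Expanding $x^S = \sum_{F \subseteq E(\Gamma)} (x-1)^{|F|}\II\{F \subseteq E(\Gamma_{\sigma'})\}$ and observing that $F \subseteq E(\Gamma_{\sigma'})$ iff $\sigma'^{-1}$ is defined on $V_F$ and restricts there to an injective graph homomorphism $F \hookrightarrow \Gamma$, I get
\[
\dE_0(L^2) = \sum_{F \subseteq E(\Gamma)} (x-1)^{|F|}\, \frac{N(F, \Gamma)}{(n)_{|V_F|}},
\]
where $V_F$ denotes the vertex set of $F$ and $N(F, \Gamma)$ the number of such embeddings. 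The empty $F$ contributes $1$; all other terms form the error.

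The core estimate to prove is that for any subforest $F$ with $f$ edges and $c$ connected components one has $|V_F| = f+c$ and $N(F, \Gamma) \leq K^c (D+1)^f$, because each component is embedded by choosing an image for its root ($\leq K$ choices) and then one of at most $D+1$ neighbors for each successive edge. Combined with $(n)_{f+c} \geq n^{f+c}/2$ (valid since $K = o(\sqrt n)$ under the hypothesis on $h$), $(K/n)^c \leq K/n$ for $c \geq 1$, and $|\{F : |F|=f\}| \leq \binom{K-1}{f}$, this yields
\[
\dE_0(L^2) \leq 1 + \frac{2K}{n}\left[\left(1 + \frac{D+1}{\lambda}\right)^{K-1} - 1\right].
\]
The assumption $h \leq \underline h - \ln(\underline h)/\ln D + \ln(1-1/D)/\ln D$ gives $K \leq D^{\underline h}/\underline h$, and since $D^{\underline h} = \Theta(\ln n)$ by the definition of $\underline h$, one gets $K\ln(1 + (D+1)/\lambda) = O(\ln n / \ln\ln n) = o(\ln n)$, so the right-hand side tends to $1$.

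The main obstacle is the crudeness of the embedding bound $N(F, \Gamma) \leq K^c (D+1)^f$: it treats the placement of each component independently and ignores the strong correlations imposed by the tree structure of $\Gamma$. This is presumably the reason the technique only reaches $\underline h - \Theta(\ln\underline h)$ rather than the conjectured sharp threshold $\underline h - O(1)$. Closing that gap would seemingly require a tree-indexed branching-process analogue of Lemma \ref{lemma:mkv_upper_bound} that tracks the joint profile of the embedding in $\Gamma$ rather than merely its cumulative size.
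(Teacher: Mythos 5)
Your proof is correct, and it takes a genuinely different route from the paper's. The paper writes $\dE_0(X_\Gamma^2) = \dE' + \dE''$ (disjoint vs.\ overlapping pairs of copies), groups the overlapping terms by the size $s$ of $\Gamma_i\cup\Gamma_j$, bounds the number of pairs crudely by $\binom{n}{s}\binom{s}{s-K,\,s-K,\,2K-s}\bigl(K!/|\mathrm{Aut}(\Gamma)|\bigr)^2$, and uses only that $\Gamma_i\cap\Gamma_j$ is a forest to get $e(\Gamma_i\cup\Gamma_j)\ge s-1$; after a change of variables and the estimate $K!^2/\bigl(u!^2(K-u)!\bigr)\le\binom{K}{u}K^{K-u}$, this gives the error bound $O\bigl(K^K/(n\lambda^K)\bigr)$. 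You instead start from the same identity $\dE_0(L^2)=\dE[x^S]$, expand $x^S$ over subforests $F\subseteq E(\Gamma)$ shared by the two copies, and bound the number $N(F,\Gamma)$ of injective embeddings of $F$ into $\Gamma$ by $K^c(D+1)^f$ using the bounded degree of the $D$-ary tree. The forest structure enters through $|V_F| = f + c$ rather than through the union-edge-count inequality. Your route gives the cleaner error term $\frac{2K}{n}\bigl[(1+(D+1)/\lambda)^{K-1}-1\bigr]$, which is actually slightly sharper than the paper's $K^K/(n\lambda^K)$ since it exploits the degree bound rather than the crude $K^{K-u}$ factor; but both reduce to the same requirement $K = O(\ln n/\ln\ln n)$, which is exactly where the stated hypothesis on $h$ is spent, so neither approach reaches past $\underline h - \Theta(\ln\underline h)$, as you note.
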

We believe the following stronger version of the Theorem to hold: 
\begin{conjecture}
	The result of Theorem \ref{thm:dtv_dary} holds true for all $h \leq \underline h$.
\end{conjecture}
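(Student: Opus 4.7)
By Lemma \ref{lem:likelihood_ratio}, $L = X_\Gamma/\dE_0(X_\Gamma)$, and the standard second-moment bound $|\dP_1-\dP_0|_{\mathrm{var}}^2 \leq \dE_0(L^2)-1$ reduces the conjecture to
\begin{equation*}
\mathrm{Var}_0(X_\Gamma) = o\bigl(\dE_0(X_\Gamma)^2\bigr) \qquad \text{whenever } h \leq \underline h.
\end{equation*}
On this window, the definition of $\underline h$ gives $p_h \geq \ln(n)/n$, and the Galton--Watson coupling of Lemma \ref{lem:coupling} together with a first-moment computation yields $\dE_0(X_\Gamma) = \Theta(n\,p_h) \geq c\,\ln n$. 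So the target becomes a bound of the form $\mathrm{Var}_0(X_\Gamma) = O\bigl((\ln n)^{2-\delta}\bigr)$ for some $\delta>0$.

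I would proceed by expanding the second moment as a sum over pairs of embeddings $\sigma,\sigma'\colon[K]\to[n]$ of $\Gamma$, grouped by the common subgraph $F=\sigma(\Gamma)\cap\sigma'(\Gamma)$, which is a sub-forest of $\Gamma$. Pairs with $F=\emptyset$ contribute $\bigl(1+O(K^2/n)\bigr)\dE_0(X_\Gamma)^2$, and since $K=\Theta(D^h)=O(\ln n)$ on the window of interest, this term is tame. For a non-empty $F$ with $v$ vertices and $e$ edges, a pair contributes $(n)_{2K-v}(\lambda/n)^{2(K-1)-e}$, multiplied by a combinatorial factor $c_F$ counting the ways of identifying $F$ as a shared subgraph inside the two copies of $\Gamma$. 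After factoring out $\dE_0(X_\Gamma)^2$, the conjecture reduces to
\begin{equation*}
\sum_{F\ne\emptyset} \frac{c_F}{\lambda^{e}\, n^{v-e}} = o(1).
\end{equation*}
To estimate this sum I would decompose $F$ into its maximal shared subtrees, each sitting inside $\Gamma$ at some height $h'\le h$, and use the Galton--Watson recursion $p_{h+1}=\psi_D(\lambda p_h)$ of Lemma \ref{lem:rec_relation} to rewrite the probabilistic factor $\lambda^{-e}$ as a telescoping product of $p_{h'}$ terms. Combined with a structural bound on $c_F$ (each shared subtree of height $h'$ admits only $O(D^{h-h'})$ placements in $\Gamma$), one hopes for term-by-term cancellation.

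The main obstacle I foresee is quantitative control in the ``barely subcritical'' window where $p_h$ is of order $\ln(n)/n$, precisely the range excluded by the current proof of Theorem \ref{thm:dtv_dary}. In that regime the clean asymptotic $\psi_D(x)\sim x^D/D!$ that drives the analysis of Theorem \ref{thm:gw_height} incurs polynomial corrections, and these corrections explain the $\ln(\underline h)$ slack currently present in the bound $h\le\underline h - \ln(\underline h)/\ln D + O(1)$. Pushing the variance estimate all the way to $h=\underline h$ therefore demands a refined expansion of the Galton--Watson recursion in this transition regime, paired with matching combinatorial bounds for $c_F$ when the shared subtree has height close to $h$. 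I expect this interplay between the sub-leading behaviour of the recursion and the growth of $c_F$ -- rather than the combinatorial enumeration of intersection patterns -- to be the essential bottleneck in establishing the conjecture in full.
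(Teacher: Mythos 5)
The statement you were asked to prove is labelled in the paper as a \emph{conjecture}, not a theorem: the authors explicitly say ``We believe the following stronger version of the Theorem to hold'' and offer no proof of it. The paper's Theorem~\ref{thm:dtv_dary} and Lemma~\ref{lem:likelihood_dary} prove the weaker statement only for $h \leq \underline h - \ln(\underline h)/\ln D + O(1)$, precisely because the second-moment bound they obtain, $\dE'' = O\bigl(\dE_0(X_\Gamma)^2\, K^K/(n\lambda^K)\bigr)$, becomes vacuous once $K$ reaches $\Theta(\ln n)$, i.e.\ once $h$ approaches $\underline h$. So there is no paper proof to compare against.

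Your proposal is, candidly, not a proof either, and you say as much: you sketch the standard second-moment decomposition over shared sub-forests $F$, suggest telescoping the probabilistic weight $\lambda^{-e}$ through the Galton--Watson recursion $p_{h+1}=\psi_D(\lambda p_h)$, and then flag the ``barely subcritical'' window $p_h = \Theta(\ln n / n)$ as the place where polynomial corrections to $\psi_D(x)\sim x^D/D!$ would have to be controlled against the combinatorial factor $c_F$. This diagnosis is accurate and matches the paper's own remark that the Markov-chain coupling used for lines does not transfer to $D$-ary trees; the crude bound $c_F \leq \binom{K}{u}K^{K-u}$ used in Lemma~\ref{lem:likelihood_dary} is exactly what loses the factor $K^K$ that kills the estimate near $h = \underline h$. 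Your idea of grouping $F$ by maximal shared subtrees and pairing each with the corresponding $p_{h'}$ is a plausible refinement, and it is a genuinely different organization of the sum than the paper's vertex-count grouping. But you neither establish the claimed cancellation nor quantify the sub-leading terms of the recursion in the transition regime, so the conjecture remains open under your approach just as it does in the paper. In short: the bottleneck you identify is real, your reorganization of the second moment is a reasonable direction, but no new estimate is actually proved.
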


If true, this conjecture would complete the bottom left part of the phase diagram for $D$-ary tree, with a sharp threshold between undetectability and detection/reconstruction.

\subsection{Reconstruction for large $h$}

When $\lambda < \lambda_D$ and $h \geq \overline h$, we have shown that under $\dP_0$ there is w.h.p no copy of $\Gamma$ in $G$. One could therefore expect to be able to reconstruct $\Gamma$ with overlap $1 - o(1)$ ; however, this is not the case :

\begin{theorem}\label{thm:dary_norec}
	Given $\lambda > 0$, $h \geq \overline h $ such that $K = o(n)$, and a realization $G$ of the graph under $\dP_1$, the overlap achieved by any estimator $\hat{\mathcal K}$ of the attack is bounded above, i.e $\ov(\hat{\mathcal K}) \leq (1 - \delta)K$
	for some $\delta > 0$.
\end{theorem}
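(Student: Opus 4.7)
The proof strategy mirrors the "lures" approach of Theorem \ref{thm:impossible_reconstr_line}, but it is significantly simpler in scope: since we only need the bound $\ov(\hat\cK)\leq (1-\delta)K$ rather than $o(K)$, it suffices to produce a single indistinguishable leaf-swap per parent of leaves. The starting observation is that, from the joint law of $(G,\sigma)$ under $\dP_1$ (equivalently via Lemma \ref{lem:likelihood_ratio}), one has
\[
\dP_1(\sigma\mid G)\ \propto\ \mathbf{1}_{\sigma(\Gamma)\subseteq G},
\]
so the posterior on the embedding $\sigma:[K]\hookrightarrow[n]$ is uniform over valid embeddings of $\Gamma$ into $G$. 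The plan is to exploit a local symmetry that permutes planted leaves with extra $G$-neighbors of their parents, while leaving validity invariant.

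First, I would apply Theorem \ref{thm:dary_height}(2) to the truncated tree $\Gamma'$ obtained by deleting the $D^h$ leaves of $\Gamma$ (a complete $D$-ary tree of height $h-1$): for $h\geq\overline h+O(1)$, the only copy of $\Gamma'$ in $G$ under $\dP_1$ is the planted one with high probability, so every valid embedding $\sigma$ of $\Gamma$ into $G$ agrees with the planted embedding on the $(D^h-1)/(D-1)$ internal nodes. Second, for each depth-$(h-1)$ node $v$ of $\Gamma$ with fixed image $u=\sigma(v)$, let $W(u)$ denote the set of $G$-neighbors of $u$ lying outside the fixed internal structure. For any choice of $D$ distinct elements among the $D+|W(u)|$ candidate neighbors of $u$, reassigning the children of $v$ in $\sigma$ to that new $D$-subset gives another valid embedding, because leaves impose only the edges $(u,\cdot)$. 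By uniformity of the posterior, the $D$-subset of $\sigma$-images of the leaf-children of $v$ is uniform over $\binom{D+|W(u)|}{D}$ equiprobable possibilities.

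Third, from this leaf-swap exchangeability, the posterior marginal probability that any given planted leaf of $v$ lies in $\cK$ equals $D/(D+|W(u)|)$ conditionally on $|W(u)|$. Under $\dP_1$ conditional on $\cK$, $|W(u)|\sim\mathrm{Bin}(n-K-O(1),\lambda/n)$, hence asymptotically $\Poi(\lambda)$, which gives $\dE_1\!\left[\tfrac{D}{D+|W(u)|}\right]\leq e^{-\lambda}+\tfrac{D}{D+1}(1-e^{-\lambda})+o(1)=1-\tfrac{1-e^{-\lambda}}{D+1}+o(1)$. Summing the expected overlap across the $D^{h-1}$ depth-$(h-1)$ nodes and trivially bounding the overlap on the $\tfrac{D^h-1}{D-1}$ internal nodes by $1$ each,
\[
\ov(\hat\cK)\ \leq\ D^h\!\left(1-\tfrac{1-e^{-\lambda}}{D+1}\right)+\tfrac{D^h-1}{D-1}+o(K)\ =\ K\!\left(1-\tfrac{(D-1)(1-e^{-\lambda})}{D(D+1)}\right)+o(K),
\]
since $K=(D^{h+1}-1)/(D-1)$, so the desired bound holds with $\delta=\tfrac{(D-1)(1-e^{-\lambda})}{2D(D+1)}>0$.

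The main obstacle is the first step: Theorem \ref{thm:dary_height}(2) guarantees uniqueness of the upper structure only for $h-1\geq \overline h+C$, so the clean argument above strictly covers $h\geq\overline h+C+1$. Extending the conclusion down to $h\geq\overline h$ requires a finer first-moment computation ruling out alternative embeddings of $\Gamma'$ in $G$ that share some (but not all) internal nodes with the planted one; this is done by showing that with high probability any such spurious alternative differs from the planted upper structure in only $o(K)$ nodes, so the leaf-swap argument still applies to the common part. The supercritical case $\lambda\geq\lambda_D$ (to which the theorem extends via the $K=o(n)$ regime) is more delicate, since $D$-ary subtrees of constant height are abundant in the ambient graph and the upper structure need not be unique; here one replaces identifiability of the full internal skeleton by a local level-by-level version of the swap symmetry, using $K=o(n)$ to ensure each depth-$(h-1)$ image retains at least one non-planted neighbor with positive probability.
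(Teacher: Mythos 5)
Your core idea---that conditional on the internal skeleton, the images of the leaves of a depth-$(h-1)$ node $v$ are exchangeable among the $G$-neighbors of $\sigma(v)$ that lie outside the skeleton, so the posterior is hypergeometric---is exactly the heart of the paper's proof (Lemma~\ref{hypergeo}). However, you arrive at this exchangeability only after trying to establish that the internal skeleton $\sigma(T_{h-1})$ is \emph{uniquely recoverable} from $G$, via Theorem~\ref{thm:dary_height}(2) applied to the truncated tree $\Gamma'$, and you correctly observe that this only works for $h \geq \overline{h} + C + 1$ (and fails entirely in the supercritical regime $\lambda \geq \lambda_D$, where the theorem's hypothesis $\lambda>0$ is general). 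The paper sidesteps this obstacle completely by \emph{conditioning} on the event $\{\sigma(T_{h-1}) = \mathcal{T}\}$ rather than trying to identify $\mathcal{T}$ from $G$: since $\hat{\cK}$ depends only on $G$, one can bound $\dE_1\bigl(|\hat{\cK}\cap\cK|\,\big|\,G,\mathcal{T}\bigr)$ for every realization of $\mathcal{T}$, and the posterior on $\sigma(L_h)$ given $(G,\mathcal{T})$ is always uniform over valid completions (the planted embedding need not be the unique one); averaging over $\mathcal{T}$ and $G$ then gives the unconditional bound. This is precisely why the paper's statement holds for all $\lambda>0$, $h\geq\overline{h}$, $K=o(n)$, whereas the uniqueness route inherently cannot reach those boundary cases.

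There is also a softer issue in your budget accounting. You effectively assume $\hat{\cK}$ places at most $D$ nodes among the candidate neighbors of each leaf-parent when you bound the expected overlap per parent by $D\cdot\dE\bigl[D/(D+|W(u)|)\bigr]$. But an adversarial $\hat{\cK}$ could overload some parents ($m_k > D$), in exchange for underloading others or sacrificing internal nodes. The paper handles this via the explicit constraint $\sum_k m_k = K - |\hat{\cK}\cap\mathcal{T}|$ together with a two-case split on $|\hat{\cK}\cap\mathcal{T}|$ and the bound $\dE_1(N_k)=Dm_k/n_k$, from which a clean linear-programming-style inequality delivers a $\delta$ independent of the adversary's allocation. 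Your expectation over $|W(u)|\sim\Poi(\lambda)$ mirrors the paper's use of the fact that a positive proportion $\alpha = 1-e^{-\lambda}-o(1)$ of leaf-parents have $n_k\geq D+1$, so the numerics are close in spirit; but without the $\sum_k m_k$ constraint, the per-parent bound is not by itself sufficient.

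In short: the exchangeability/hypergeometric lemma is the right tool and you found it independently, but to close the proof for all $h\geq\overline{h}$ and all $\lambda>0$ you should drop the attempt to recover $\mathcal{T}$ from $G$ and instead condition on $\mathcal{T}$ directly, and you should propagate the estimator's global budget $|\hat{\cK}|=K$ into the leaf-parent allocation rather than assuming $m_k\leq D$.
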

The proof is based on the fact that when $D > 1$, the leaves make up a positive proportion of $\Gamma$, and they are hard to reconstruct with high precision. On the other hand, since there is no copy of $\Gamma$ in $G$ w.h.p, one can still reasonably expect to achieve a partial reconstuction. This is the heuristic behind our second conjecture :
\begin{conjecture}
	For all $h \geq \overline h$, there exists a $\delta > 0$ and an estimator (possibly random) $\hat\cK$ such that w.h.p $ \ov(\hat\cK) \geq \delta K$. 
\end{conjecture}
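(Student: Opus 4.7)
The plan is to exhibit an estimator $\hat\cK$ achieving $\ov(\hat\cK) \geq \delta K$ with high probability under $\dP_1$. The starting point is Lemma~\ref{lem:likelihood_ratio} together with the fact that, conditional on the graph $G$, the likelihood of a planting map $\sigma$ depends only on the indicator that $\sigma(\Gamma) \subseteq G$; hence the posterior is uniform over copies of $\Gamma$ in $G$, and the Bayes-optimal (marginal-max) estimator outputs the top-$K$ vertices of $G$ ranked by the fraction of copies of $\Gamma$ in $G$ containing them. It therefore suffices to show that with high probability under $\dP_1$, a uniformly random copy $T'$ of $\Gamma$ in $G$ shares at least $\delta K$ vertices with the planted copy $T_*$.

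Conditioning on $T_*$ and writing $G = T_* \cup G_0$, I enumerate copies by the set of edges they borrow from $T_*$ and from $G_0$. The Galton--Watson coupling of Lemma~\ref{lem:coupling} together with Theorem~\ref{thm:dary_height} shows that for any prescribed vertex $v \in V(G)$, the probability that $G_0$ supports a $D$-ary subtree of depth $\ell$ rooted at $v$ is at most $p_\ell$, decaying doubly exponentially in $\ell$. This forces any competing copy $T'$ to ``anchor'' onto $V(T_*)$ at many depths: for each $T'$-vertex at depth $d \leq h - \overline h$ that lies outside $V(T_*)$, the $T'$-subtree rooted there must be a $D$-ary tree of depth $h - d > \overline h$ realized in $G_0$, which by definition of $\overline h$ has probability $o(1/n)$. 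I plan to encode the deviation of $T'$ from $T_*$ by a branching profile $(b_d)_{0 \leq d \leq h}$ recording the number of vertices of $T'$ at depth $d$ outside $V(T_*)$, and to bound the weight $\prod_d p_{h-d}^{b_d}$ times combinatorial factors summed over profiles; the mass of profiles with $\sum_d b_d D^{h-d} \geq (1-\delta) K$ will be negligible, so the MAP posterior concentrates on copies sharing at least $\delta K$ vertices with $T_*$.

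The complementary step is to show abundance of such near-$T_*$ copies, i.e.\ a matching lower bound on the number of copies with small branching profile: this is straightforward because every internal vertex of $T_*$ has, in addition to its $D+1$ tree-edges, an independent $\mathrm{Poi}(\lambda)$ number of $G_0$-neighbors, which supplies many exchangeable choices for leaves at depth $h$ and shallow rearrangements. This matches the heuristic behind Theorem~\ref{thm:dary_norec}: leaves are freely exchangeable and account for the loss of exact recovery, while the upper levels are rigid and are correctly identified by the marginal-max estimator with positive probability.

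The main obstacle will be making the level-by-level analysis tight when $h$ is only barely above $\overline h$ (in particular at $h = \overline h$): then $p_{h-d}$ decays slowly for small $h-d$, and one must track both the multiplicative boost provided by $T_*$'s own edges, allowing branches of $T'$ to re-enter and leave $V(T_*)$, and the dependencies between sibling subtrees of $T'$. A promising technical route is to formulate the profile weight as the partition function of a branching process with immigration along $T_*$, and to bound it via a transfer-matrix argument similar in spirit to Lemma~\ref{lemma:mkv_upper_bound}. This is where the bulk of the proof work will lie, and it is likely that the resulting $\delta$ will depend delicately on both $D$ and $\lambda$.
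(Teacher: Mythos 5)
This statement is one of the paper's open \emph{conjectures}: the authors offer only a one-sentence heuristic (``since there is no copy of $\Gamma$ in $G$ w.h.p.\ under $\dP_0$, one can still reasonably expect partial reconstruction'') and no proof. Your submission is therefore not being measured against an existing argument, and it must stand on its own as a complete proof — which it does not. The reduction in your first paragraph is sound: the posterior given $G$ is uniform over copies of $\Gamma$ in $G$ (this is exactly how the paper computes posteriors in the proofs of Theorem~\ref{thm:reconstruction_fails} and Lemma~\ref{hypergeo}), so outputting a uniformly random copy is a legitimate randomized estimator and it suffices to show $\dE_1|T'\cap T_*|\ge\delta K$ for $T'$ a uniform copy. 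But everything after that is a plan, not a proof, and you say so yourself (``this is where the bulk of the proof work will lie'').

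The unresolved step is precisely the crux of the conjecture. Your anchoring claim — that a $T'$-vertex at depth $d$ outside $V(T_*)$ must root a depth-$(h-d)$ $D$-ary tree ``realized in $G_0$,'' hence has probability $o(1/n)$ for $h-d\ge\overline h$ — is false as stated, because the subtree below such a vertex may re-enter $V(T_*)$ and borrow planted edges; near $V(T_*)$ the graph locally looks like a Galton--Watson tree with an infinite $D$-ary spine grafted on, and the supply of competing subtrees there is not governed by $p_\ell$. You flag this (``branches of $T'$ re-enter and leave $V(T_*)$'') but defer its resolution to an unspecified transfer-matrix argument; without it the first-moment bound over branching profiles does not close. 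Two further gaps: (i) the coupling of Lemma~\ref{lem:coupling} only dominates $(G,v)_\ell$ by a GW tree with parameter $\lambda'>\lambda$ and only on cycle-free neighbourhoods, so the relevant tail probability is $p'_\ell(\lambda')$ plus coupling errors, not $p_\ell$ — this is exactly why Theorem~\ref{thm:dary_height} requires $h\ge\overline h+C$ rather than $h\ge\overline h$, whereas the conjecture is asserted for all $h\ge\overline h$; (ii) the events ``vertex $v$ roots a deep $D$-ary subtree in $G_0$'' for the various off-tree vertices of $T'$ are not independent, and conditioning on $T_*\subseteq G$ and on $X_\Gamma$ (which appears in the posterior normalization) further entangles them. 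Until the profile/partition-function bound is actually carried out and shown to beat the combinatorial entropy of profiles with $\sum_d b_d D^{h-d}\ge(1-\delta)K$, the statement remains what the paper says it is: a conjecture.
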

\bibliography{graphical_combined,bib_2}

\begin{thebibliography}{22}
\providecommand{\natexlab}[1]{#1}
\providecommand{\url}[1]{\texttt{#1}}
\expandafter\ifx\csname urlstyle\endcsname\relax
  \providecommand{\doi}[1]{doi: #1}\else
  \providecommand{\doi}{doi: \begingroup \urlstyle{rm}\Url}\fi

\bibitem[Abbe and Sandon(2016)]{abbe16}
E.~Abbe and C.~Sandon.
\newblock Detection in the stochastic block model with multiple clusters: proof
  of the achievability conjectures, acyclic bp, and the information-computation
  gap.
\newblock In \emph{NIPS'16}, 2016.

\bibitem[Bagaria et~al.(2018)Bagaria, Ding, Tse, Wu, and
  Xu]{DBLP:journals/corr/abs-1804-05436}
Vivek~Kumar Bagaria, Jian Ding, David Tse, Yihong Wu, and Jiaming Xu.
\newblock Hidden hamiltonian cycle recovery via linear programming.
\newblock \emph{CoRR}, abs/1804.05436, 2018.
\newblock URL \url{http://arxiv.org/abs/1804.05436}.

\bibitem[Banks et~al.(2018)Banks, Moore, Vershynin, Verzelen, and
  Xu]{BanksMoore18}
Jess Banks, Cristopher Moore, Roman Vershynin, Nicolas Verzelen, and Jiaming
  Xu.
\newblock Information-theoretic bounds and phase transitions in clustering,
  sparse pca, and submatrix localization.
\newblock \emph{{IEEE} Trans. Information Theory}, 64\penalty0 (7):\penalty0
  4872--4894, 2018.
\newblock \doi{10.1109/TIT.2018.2810020}.
\newblock URL \url{https://doi.org/10.1109/TIT.2018.2810020}.

\bibitem[Barak et~al.(2016)Barak, Hopkins, Kelner, Kothari, Moitra, and
  Potechin]{barak16}
Boaz Barak, Samuel~B. Hopkins, Jonathan Kelner, Pravesh Kothari, Ankur Moitra,
  and Aaron Potechin.
\newblock A nearly tight sum-of-squares lower bound for the planted clique
  problem.
\newblock \emph{Foundations of Computer Science (FOCS), 2016 IEEE 57th Annual
  Symposium on}, 2016.

\bibitem[Barbour and Chen(2005)]{BarbourChen05}
A.~D. Barbour and Louis H.~Y. Chen, editors.
\newblock \emph{An introduction to {S}tein's method}, volume~4 of \emph{Lecture
  Notes Series. Institute for Mathematical Sciences. National University of
  Singapore}.
\newblock Singapore University Press, Singapore, 2005.

\bibitem[Berthet and Rigollet(2013)]{berthet2013lowerSparsePCA_arxiv}
Quentin Berthet and Philippe Rigollet.
\newblock Computational lower bounds for sparse {PCA}.
\newblock \emph{CoRR}, abs/1304.0828, 2013.
\newblock URL \url{http://arxiv.org/abs/1304.0828}.

\bibitem[Bhatia(1997)]{MR1477662}
Rajendra Bhatia.
\newblock \emph{Matrix analysis}, volume 169 of \emph{Graduate Texts in
  Mathematics}.
\newblock Springer-Verlag, New York, 1997.
\newblock ISBN 0-387-94846-5.
\newblock \doi{10.1007/978-1-4612-0653-8}.
\newblock URL \url{http://dx.doi.org/10.1007/978-1-4612-0653-8}.

\bibitem[Bollob{\'a}s(2001)]{bollobas2001random}
B.~Bollob{\'a}s.
\newblock \emph{Random Graphs}.
\newblock Cambridge Studies in Advanced Mathematics. Cambridge University
  Press, 2001.
\newblock ISBN 9780521797221.
\newblock URL \url{https://books.google.fr/books?id=o9WecWgilzYC}.

\bibitem[Bordenave et~al.(2015)Bordenave, Lelarge, and
  Massouli{\'{e}}]{DBLP:conf/focs/BordenaveLM15}
Charles Bordenave, Marc Lelarge, and Laurent Massouli{\'{e}}.
\newblock Non-backtracking spectrum of random graphs: Community detection and
  non-regular ramanujan graphs.
\newblock In \emph{{IEEE} 56th Annual Symposium on Foundations of Computer
  Science, {FOCS} 2015, Berkeley, CA, USA, 17-20 October, 2015}, pages
  1347--1357, 2015.
\newblock \doi{10.1109/FOCS.2015.86}.
\newblock URL \url{https://doi.org/10.1109/FOCS.2015.86}.

\bibitem[Boucheron et~al.(2013)Boucheron, Lugosi, and
  Massart]{boucheron2013concentration}
S.~Boucheron, G.~Lugosi, and P.~Massart.
\newblock \emph{Concentration Inequalities: A Nonasymptotic Theory of
  Independence}.
\newblock OUP Oxford, 2013.
\newblock ISBN 9780199535255.
\newblock URL \url{https://books.google.fr/books?id=koNqWRluhP0C}.

\bibitem[Brennan et~al.(2018)Brennan, Bresler, and
  Huleihel]{pmlr-v75-brennan18a}
Matthew Brennan, Guy Bresler, and Wasim Huleihel.
\newblock Reducibility and computational lower bounds for problems with planted
  sparse structure.
\newblock In S\'ebastien Bubeck, Vianney Perchet, and Philippe Rigollet,
  editors, \emph{Proceedings of the 31st Conference On Learning Theory},
  volume~75 of \emph{Proceedings of Machine Learning Research}, pages 48--166.
  PMLR, 06--09 Jul 2018.
\newblock URL \url{http://proceedings.mlr.press/v75/brennan18a.html}.

\bibitem[Decelle et~al.(2011)Decelle, Krzakala, Moore, and
  Zdeborov\'a]{Decelle11}
Aurelien Decelle, Florent Krzakala, Cristopher Moore, and Lenka Zdeborov\'a.
\newblock Asymptotic analysis of the stochastic block model for modular
  networks and its algorithmic applications.
\newblock \emph{Phys. Rev. E}, 84:\penalty0 066106 (1--19), Dec 2011.
\newblock \doi{10.1103/PhysRevE.84.066106}.

\bibitem[Dekel et~al.(2014)Dekel, Gurel-Gurevich, and
  Peres]{dekel_gurel-gurevich_peres_2014}
Yael Dekel, Ori Gurel-Gurevich, and Yuval Peres.
\newblock Finding hidden cliques in linear time with high probability.
\newblock \emph{Combinatorics, Probability and Computing}, 23\penalty0
  (1):\penalty0 29–49, 2014.
\newblock \doi{10.1017/S096354831300045X}.

\bibitem[Deshpande and Montanari(2015)]{Deshpande12}
Y.~Deshpande and A.~Montanari.
\newblock Finding hidden cliques of size $\sqrt{N/e}$ in nearly linear time.
\newblock \emph{Foundations of Computational Mathematics}, 15\penalty0
  (4):\penalty0 1069--1128, August 2015.

\bibitem[Hajek et~al.(2015)Hajek, Wu, and Xu]{HajekWuXu14}
B.~Hajek, Y.~Wu, and J.~Xu.
\newblock Computational lower bounds for community detection on random graphs.
\newblock In \emph{Proceedings COLT 2015}, pages 899--–928, June 2015.

\bibitem[Janson et~al.(2011)Janson, Luczak, and Rucinski]{janson11}
S.~Janson, T.~Luczak, and A.~Rucinski.
\newblock \emph{Random Graphs}.
\newblock Wiley Series in Discrete Mathematics and Optimization. Wiley, 2011.

\bibitem[Kato(1966)]{Kato:1966:PTL}
Tosio Kato.
\newblock \emph{Perturbation Theory for Linear Operators}.
\newblock Springer, 1966.

\bibitem[Massouli\'e(2013)]{Massoulie13}
L.~Massouli\'e.
\newblock Community detection thresholds and the weak ramanujan property.
\newblock \emph{arXiv:1109.3318. The conference version appeared in Proceedings
  of the 46th Annual ACM Symposium on Theory of Computing}, 2013.

\bibitem[Moore and Mertens(2011)]{Moore:2011:NC:2086753}
Cristopher Moore and Stephan Mertens.
\newblock \emph{The Nature of Computation}.
\newblock Oxford University Press, Inc., New York, NY, USA, 2011.
\newblock ISBN 0199233217, 9780199233212.

\bibitem[Mossel et~al.(2013)Mossel, Neeman, and Sly]{Mossel13}
E.~Mossel, J.~Neeman, and A.~Sly.
\newblock A proof of the block model threshold conjecture.
\newblock \emph{arxiv:1311.4115}, 2013.

\bibitem[Mossel et~al.(2015)Mossel, Neeman, and Sly]{Mossel12}
Elchanan Mossel, Joe Neeman, and Allan Sly.
\newblock Reconstruction and estimation in the planted partition model.
\newblock \emph{Probability Theory and Related Fields}, 162\penalty0
  (3-4):\penalty0 431--461, 2015.
\newblock ISSN 0178-8051.

\bibitem[Verzelen and Arias-Castro(2015)]{verzelen2015}
Nicolas Verzelen and Ery Arias-Castro.
\newblock Community detection in sparse random networks.
\newblock \emph{Ann. Appl. Probab.}, 25\penalty0 (6):\penalty0 3465--3510, 12
  2015.
\newblock \doi{10.1214/14-AAP1080}.
\newblock URL \url{https://doi.org/10.1214/14-AAP1080}.

\end{thebibliography}
\appendix
\section{Proof of preliminary results}
\subsection{Proof of Lemma \ref{lem:likelihood_ratio}}
Let $\Gamma_1, \ldots, \Gamma_m$ be the copies of $\Gamma$ in $K_n$ the complete graph on $[n]$, where, denoting $\mathrm{Aut}(\Gamma)$ the automorphism group of $\Gamma$, $ m = \dbinom nK \frac{K!}{|\mathrm{Aut}(\Gamma)|}$. 
Then, by Bayes' formula, letting $e(G)$ denote the number of edges in graph $G$, one has for any graph $g$:
$$
\begin{array}{ll}
		\displaystyle\dP_1(G=g) = \frac1m\sum_{i=1}^m{\dP_0(G=g \,|\, \Gamma_i \in G)} 
				 &= \displaystyle \frac1m\sum_{i=1}^m{\II_{\Gamma_i \in g}\left(\frac\lambda n\right)^{e(g) - e(\Gamma_i)}\left(1-\frac\lambda n\right)^{\dbinom n 2 - e(g)}} \\
				 &\displaystyle = \frac1m\left(\frac \lambda n\right)^{-e(\Gamma)}\sum_{i=1}^m{\II_{\Gamma_i\in g}\,\dP_0(G=g)} \\
				 &\displaystyle = \frac{X_{\Gamma}}{\dE_0[X_{\Gamma}]} \dP_0(G),
\end{array}
$$
which completes the proof of Lemma \ref{lem:likelihood_ratio}.

\subsection{Proof of Theorem \ref{thm:star}}
	
We first prove that planted stars of size $K=\ln(n)/\ln(\ln(n))[1-\omega(1/\ln(\ln(n)))]$ are undetectable. The number $X$ of $K$-stars verifies 
$$
\dE_0(X)=n\binom{n-1}{K}\left(\frac{\lambda}{n}\right)^K\sim n\frac{\lambda^K}{K!}\cdot
$$
We will have undetectability if $\dE_0(L^2)\sim 1$, or equivalently by symmetry arguments, if
$$
\dE_0(X|\Gamma_1\in G)\sim \dE_0(X),
$$
where $\Gamma_1$ is an arbitrary $K$-star, e.g. that made of edges $(i,K+1)$, $i\in[K]$. We decompose $\dE_0(X|\Gamma_1\in G)$ into three terms $M_1$, $M_2$ and $M_3$, the expected numbers of $K$-stars centered respectively: at node $K+1$, at some  node $i\in[K]$, and finally at some node $i\in[n]\setminus[K+1]$. Since $M_3$ is upper-bounded by $\dE_0(X)$, it suffices to show that $M_1$ and $M_2$ are $o(\dE_0(X))$. One has:
$$
\begin{array}{ll}
M_2&=K\left( \binom{n-2}{K-1}\left(\frac{\lambda}{n}\right)^{K-1}+\binom{n-2}{K}\left(\frac{\lambda}{n}\right)^{K-1}\right)\\
&\le \frac{2K^2}{n}\dE_0(X)\\
&\ll \dE_0(X).
\end{array}
$$
Also,
$$
\begin{array}{ll}
M_1&=\sum_{\ell=0}^K\binom{K}{\ell}\binom{n-K-1}{K-\ell}\left(\frac{\lambda}{n}  \right)^{K-\ell}\\
&\le \sum_{\ell=0}^K \binom{K}{\ell}\frac{\lambda^\ell}{\ell!}\\
&\le (1+\lambda)^K.
\end{array}
$$
The desired result $M_1\ll \dE_0(X)$ will follow if
$$
\ln(n)+K\ln(\lambda)-\ln(K!)-K\ln(1+\lambda)\to +\infty.
$$
The terms in $K$ are of order at most $\ln(n)/\ln(\ln(n))$. By Stirling's formula, this will therefore hold provided $\ln(n)-K\ln(K)=\omega(\ln(n)/\ln(\ln(n)))$. By assumption,
$$
K\ln(K)\le \frac{\ln(n)}{\ln(\ln(n))}(1-\omega(1/\ln(\ln(n))))\ln(\ln(n))=\ln(n)-\omega(\ln(n)/\ln(\ln(n))),
$$
hence the undetectability result.

Similarly for detectability, the assumption that $K=\ln(n)/\ln(\ln(n))[1+\omega(1/\ln(\ln(n)))]$ entails that
$$
\ln (\dE_0(X))=K\ln(\lambda)+\ln(n)-\ln(K!)=-\omega(1).
$$
Thus a test which decides $H_1$ if and only if there is a node in $G$ with degree at least $K$ succeeds with high probability. Moreover, with high probability, only the centre of the planted star has degree at least $K$. The reconstruction method which consists in choosing, besides the highest degree node, $K$ of its neighbours chosen uniformly at random, achieves an overlap of $K-o(K)$: indeed, conditional on the planted star's centre having initially $Y$ neighbors in the original graph, the expected number of nodes in the reconstructed set will be 
$$
1+\frac{K^2}{Y+K}\ge 1+K(1-Y/K)=1+K-Y.
$$
Its expectation is lower-bounded by $K+1-\lambda$, and is thus $K-o(K)$.

\subsection{Proof of Theorem \ref{thm:easy_detect}}

Let $k$ denote the size of the hidden connected component, with $k=0$ under $\dP_0$ and $k=K$ under $\dP_1$. Let $A_1$ count the number of isolated nodes in $G$, $A_2$ the number of connected pairs $(i,j)$ that form an isolated component, and $A_3$ the number of triplets $(i,j,k)$ that form a connected component.

These quantities satisfy with high probability
\begin{equation}\label{eq:small_connected_counts}
A_1=e^{-\lambda}(n-k)+O(\sqrt{n}),\;A_2=\frac{(n-k)^2}{2}\frac{\lambda}{n}e^{-2\lambda}+O(\sqrt{n}),\;
A_3=\frac{(n-k)^3}{2}\frac{\lambda^2}{n^2}e^{-3\lambda}+O(\sqrt{n}).
\end{equation}
Indeed, only the $n-k$ nodes that are not part of the hidden connected graph can contribute to counts of connected components of size 1, 2 or 3. \eqref{eq:small_connected_counts} then follows from evaluation of the expectation and variance of these quantities.

Set $\hat{\lambda}=(nA_3)/(A_1 A_2)$. By \eqref{eq:small_connected_counts},
$\hat{\lambda}=\lambda+O(n^{-1/2})$. Now form 
$\hat{k}=n-e^{\hat{\lambda}}A_1$.
Again by \eqref{eq:small_connected_counts}, $
\hat{k}=n-(1-O(n^{-1/2}))(n-k)+O(\sqrt{n})=k+O(\sqrt{n})$.
Our test then decides $H_1$ if $\hat{k}\ge t_n$ and $H_0$ otherwise where $t_n$ is such that $\sqrt{n}\ll t_n \ll K$, which is indeed satisfied for $t_n=\sqrt{K\sqrt{n}}$). This ensures that the test discriminates correctly between the two hypotheses with high probability. Necessarily then, the variation distance $|\dP_0-\dP_1|_{var}$ goes to 1 as $n\to\infty$.

\subsection{Proof of Theorem~\ref{thm:reconstruction_fails}}

We first begin by a simple lemma, using the concentration of $X_\Gamma$ :
\begin{lemma}\label{lem:small_intersection}
    Let $\mathcal I_\Gamma$ be the proportion of pairs copies of $\Gamma$ in $G$ whose intersection is nonempty :
    \[ \mathcal I_\Gamma = \frac{1}{X_\Gamma^2}\sum_{\Gamma', \Gamma'' \in G}{\II_{\Gamma' \cap \Gamma'' \neq \emptyset}},\]
    where $\Gamma'$ and $\Gamma''$ range over all copies of $\Gamma$ in $G$.
    
    Then $\dE_0(\mathcal I_\Gamma) = o(1)$.
\end{lemma}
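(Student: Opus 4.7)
The plan is to combine the second-moment hypothesis $\dE_0(L^2) = 1 + o(1)$, which by Lemma \ref{lem:likelihood_ratio} is equivalent to $\dE_0(X_\Gamma^2) = (1+o(1))\dE_0(X_\Gamma)^2$, with a crude combinatorial bound counting how many pairs of copies of $\Gamma$ in $K_n$ can share a vertex; the hypothesis $K = o(\sqrt n)$ is what makes this bound effective. A concentration argument will then let me replace the random denominator $X_\Gamma^2$ in the definition of $\mathcal I_\Gamma$ by the deterministic quantity $\dE_0(X_\Gamma)^2$.

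First I would decompose $X_\Gamma^2 = X_\Gamma + N_\cap^{\neq} + N_{\mathrm{disj}}$, where $N_\cap^{\neq}$ counts ordered pairs of distinct copies of $\Gamma$ in $G$ sharing at least one vertex, and $N_{\mathrm{disj}}$ counts ordered pairs of distinct vertex-disjoint copies. Since vertex-disjoint copies have automatically disjoint edge sets, $\dE_0(N_{\mathrm{disj}})$ equals the number $D_n$ of ordered vertex-disjoint pairs of copies of $\Gamma$ in $K_n$ times $(\lambda/n)^{2(K-1)}$. By vertex-transitivity of $K_n$, the number of copies of $\Gamma$ in $K_n$ through any given vertex is exactly $mK/n$, so a union bound over the $K$ vertices of a fixed copy gives at most $mK^2/n$ copies sharing a vertex with it, hence at most $m^2 K^2/n = o(m^2)$ non-disjoint ordered pairs under $K = o(\sqrt n)$. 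Consequently $D_n = (1-o(1))m^2$ and $\dE_0(N_{\mathrm{disj}}) = (1-o(1))\dE_0(X_\Gamma)^2$. Combining with the second-moment hypothesis and with $\dE_0(X_\Gamma) = o(\dE_0(X_\Gamma)^2)$ (using $\dE_0(X_\Gamma) = \omega(1)$), this yields $\dE_0(N_\cap) = \dE_0(X_\Gamma) + \dE_0(N_\cap^{\neq}) = o(\dE_0(X_\Gamma)^2)$.

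Second, I would transfer this unconditional moment bound to $\dE_0(\mathcal I_\Gamma)$ via Chebyshev. The identity $\mathrm{Var}_0(X_\Gamma)/\dE_0(X_\Gamma)^2 = \dE_0(L^2) - 1 = o(1)$ implies that for any fixed $\varepsilon \in (0,1)$ the event $A_\varepsilon = \{X_\Gamma \geq (1-\varepsilon)\dE_0(X_\Gamma)\}$ satisfies $\dP_0(A_\varepsilon^c) = o(1)$. Splitting $\dE_0(\mathcal I_\Gamma) = \dE_0(\mathcal I_\Gamma \II_{A_\varepsilon}) + \dE_0(\mathcal I_\Gamma \II_{A_\varepsilon^c})$, the second summand is bounded by $\dP_0(A_\varepsilon^c) = o(1)$ since $\mathcal I_\Gamma \le 1$, while on $A_\varepsilon$ one has $X_\Gamma^2 \geq (1-\varepsilon)^2 \dE_0(X_\Gamma)^2 > 0$, so the first summand is bounded by $(1-\varepsilon)^{-2}\,\dE_0(N_\cap)/\dE_0(X_\Gamma)^2 = o(1)$. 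This gives $\dE_0(\mathcal I_\Gamma) = o(1)$.

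The main obstacle is really the combinatorial bound on non-disjoint pairs: a simple union bound over the $K$ vertices of a fixed copy suffices, but it is exactly here that the hypothesis $K = o(\sqrt n)$ enters sharply to give $K^2/n = o(1)$. Without this assumption the non-disjoint pairs would contribute a non-negligible share of $\dE_0(X_\Gamma)^2$ and the argument would break down, reflecting the fact that in a denser regime copies of $\Gamma$ are genuinely entangled.
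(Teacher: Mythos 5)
Your proof is correct and follows essentially the same strategy as the paper's: it uses the second-moment hypothesis together with a disjoint/non-disjoint decomposition of the pair count (where $K=o(\sqrt n)$ makes the disjoint pairs dominate, exactly the asymptotic $\dE' \sim \dE_0(X_\Gamma)^2$ the paper invokes), then passes from this expectation bound to $\dE_0(\mathcal I_\Gamma)$ by conditioning on a lower-deviation event for $X_\Gamma$ and bounding the exceptional event via Chebyshev. The only cosmetic difference is your explicit $A_\varepsilon$ event versus the paper's $\{L^2>1/2\}$, which is the same device with a fixed constant in place of your free $\varepsilon$.
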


\begin{proof}(of Lemma~\ref{lem:small_intersection}).
    As in the proof of Lemma~\ref{lem:likelihood_ratio}, let $\Gamma_1, \dots, \Gamma_m$ be the copies of $\Gamma$ in $K_n$, and let $X_i = \II_{\Gamma_i \in G}$. Write 
	\begin{equation}
		\dE_0\left(X_{\Gamma}^2\right) = \sum_{i,j}\dE_0\left(X_iX_j\right) = \dE' + \dE'',
	\end{equation}
	where $\dE'$ is the sum over $\Gamma_i, \Gamma_j$ having disjoint vertex sets.
	
	We can easily compute $\dE'$ :
	\[ \dE' = \dbinom n K \dbinom{n-K}K \left(\frac{K!}{|\mathrm{Aut}(\Gamma)|}\right)^2 p^{2K-2} \sim \frac{n^{2K}p^{2K-2}}{|\mathrm{Aut}(\Gamma)|^2} \sim \dE_0\left(X_{\Gamma}\right)^2 \]
	Since $\dE_0\left(L^2\right) = 1 + o(1)$, it follows that
	\begin{equation}\label{eq:small_intersection}
	    \frac{\dE''}{\dE_0\left(X_{\Gamma}^2\right)} = o(1).
	\end{equation}
	
	Now, it is straightforward to see that
	\[ \sum_{\Gamma', \Gamma'' \in G}{\II_{\Gamma' \cap \Gamma'' \neq \emptyset}} = \sum_{\Gamma_i \cap \Gamma_j \neq \emptyset}X_iX_j. \]
	Recall that $L = X_\Gamma / \dE_0(X_\Gamma)$ ; we can decompose $\mathcal I_\Gamma$  as follows :
	
	\begin{align*}
	    \mathcal I_\Gamma &= \mathcal I_\Gamma \II_{L^2 > 1/2} + \mathcal I_\Gamma \II_{L^2 < 1/2}\\
	    &= \frac{\sum_{\Gamma_i \cap \Gamma_j \neq \emptyset}X_iX_j}{\dE_0\left(X_{\Gamma}^2\right)}\cdot \frac1{L^2} \cdot \II_{L^2 > 1/2} + \mathcal I_\Gamma \II_{L < 1/\sqrt{2}}
	\end{align*}
	
	We can now bound each term separately. The first one is straightforward since $1/L^2 < 2$ whenever the indicator variable is nonzero ; for the second one, notice that $I_\Gamma \leq 1$ and thus
	\begin{align*}
	    \dE_0(\mathcal I_\Gamma) &\leq \frac{\dE''}{\dE_0\left(X_{\Gamma}^2\right)}\cdot 2 + \dP_0\left(L < \frac1{\sqrt{2}}\right)\\
	    & = \frac{\dE''}{\dE_0\left(X_{\Gamma}^2\right)}\cdot 2 + o(1),
	\end{align*}
	having used the Bienaymé-Chebychev inequality to bound the second term.
	
	Using (\ref{eq:small_intersection}) then completes the proof.
\end{proof}

\vspace{1em}

We can now move on to the proof of Theorem~\ref{thm:reconstruction_fails} ; we first transform the expression of $\ov(\hat\cK)$ to better suit our needs :

\begin{align*}
    \mathrm{ov}(\hat\cK) &= \sum_G \sum_\cK \dP_1(G, \cK) \left|\hat\cK \cap \cK\right| \\
    &= \sum_G \dP_1(G) \sum_\cK \dP_1(\cK\, |\, G) \left|\hat\cK \cap \cK\right|
\end{align*}
where $\cK$ ranges over all $K$-subsets of $[n]$ and $G$ over all graphs on $n$ vertices.

The second sum can be transformed as in the proof of Lemma~\ref{lem:likelihood_ratio} into :
\begin{align*}
    \mathrm{ov}(\hat\cK) &= \sum_G \dP_1(G) \sum_{\Gamma' \in G} \frac{|\hat\cK \cap \Gamma'|}{X_\Gamma} \\
    &= \sum_G \dP_0(G) \sum_{\Gamma' \in G} \frac{|\hat\cK \cap \Gamma'|}{X_\Gamma} + o(K),
\end{align*}
since the conditions in Theorem~\ref{thm:reconstruction_fails} imply that $|\dP_1 - \dP_0|_{\mathrm{var}} = o(1)$ (see the remark after Theorem~\ref{thm:detection_fails}). The sum now ranges over all copies of $\Gamma$ in $G$.

This can now be expressed as an expectation :
\begin{align*}
    \mathrm{ov}(\hat\cK) &= \dE_0\left[ \sum_{\Gamma' \in G} \frac{|\hat\cK \cap \Gamma'|}{X_\Gamma} \right] + o(K) \\
    &= \sum_{i \in [n]}\dE_0\left[ \II_{i \in \hat\cK} \sum_{\Gamma' \in G} \frac{\II_{i \in \Gamma'}}{X_\Gamma} \right] + o(K).
\end{align*}

We can now finally use Lemma~\ref{lem:small_intersection} : indeed,
\begin{align*}
    \left(\sum_{\Gamma' \in G} \frac{\II_{i \in \Gamma'}}{X_\Gamma}\right)^2 &= \frac{1}{X_\Gamma^2}\sum_{\Gamma', \Gamma'' \in G}{\II_{i \in \Gamma'}\II_{i \in \Gamma''}} \\
    &\leq  \frac{1}{X_\Gamma^2}\sum_{\Gamma', \Gamma'' \in G}{\II_{\Gamma' \cap \Gamma'' \neq \emptyset}} \\
    &= \mathcal I_\Gamma.
\end{align*}

Therefore,
\begin{align*}
    \mathrm{ov}(\hat\cK) &\leq \sum_{i \in [n]}\dE_0\left[ \II_{i \in \hat\cK} \sqrt{\mathcal I_\Gamma} \right] + o(K) \\
    &= K\dE_0\left[ \sqrt{\mathcal I_\Gamma} \right] + o(K) \\
    &= o(K),
\end{align*}
using Jensen's inequality as well as Lemma~\ref{lem:small_intersection}. This completes the proof of Theorem~\ref{eq:small_intersection}.
\section{Detailed proofs for planted paths}\label{sec:appendix_lines}
\subsection{Proof of Lemma~\ref{lemma:LR_paths}} 
Expression \eqref{LR_ll} follows directly from Lemma \ref{lem:likelihood_ratio}. In the display below, by $\sum_{(i_1\cdots i_K)}$ we mean summation over all the $n(n-1)\cdots(n-K+1)$ oriented paths $(i_1,\ldots,i_K)$ of length $K$ over nodes in $[n]$. Write: 
$$
\begin{array}{lll}
\dE_0(L^2)&=&\sum_{(i_1\cdots i_K)}\sum_{(j_1\cdots j_K)}\left(\frac{(n/\lambda)^{K-1}}{n\cdots(n-K+1)}\right)^2 \dP_0(\hbox{paths }(i_1\cdots i_K)\hbox{ and }(j_1\cdots j_K)\hbox{ present in }G)\\
&=&\sum_{(i_1\cdots i_K)}\left(\frac{(n/\lambda)^{2(K-1)}}{n\cdots(n-K+1)}\right)\dP_0(\hbox{paths }(i_1\cdots i_K)\hbox{ and }(1\cdots K)\hbox{ present in }G)\\
&=&\left(\frac{n}{\lambda}\right)^{K-1}\dP_0(\hbox{ path }\pi=(I_1\cdots I_K)\hbox{ present in }G |\hbox{ path }(1\cdots K)\hbox{ present in }G),
\end{array}
$$
where $\pi=(I_1\cdots I_K)$ is a candidate path chosen uniformly at random from the $n(n-1)\ldots(n-K)$ possible length-$K$ paths. In the above we used symmetry to consider a single path $(1\cdots K)$ instead of all paths $(j_1\cdots j_K)$. 

Note that conditionally on the event that path $(1\cdots K)$ be present in $G$ and on the path $\pi$, the probability that path $\pi$ is also present in $G$ is given by $(\lambda/n)^{K-1-S}$, where $S$ is the number of edges in common between the two paths $\pi$ and $(1\cdots K)$. This yields expression \eqref{LR_lll}.
\subsection{Proof of Lemma \ref{lemma:mkv_upper_bound}}
Let $\cF_t=\sigma(I_1,\ldots I_t)$. Recall that $n'=n-K$. It is easily verified that we have the following inequalities for all $t=2,\ldots,K-1$:
$$
\dP(Z_t=1|\cF_t)\le\left\{
\begin{array}{ll}
\frac{1}{n'}&\hbox{if }Z_{t-1}=1,\\
\frac{2}{n'}&\hbox{if }Z_{t-1}=0,\\
0&\hbox{if }Z_{t-1}=-1.
\end{array}
\right.
$$
Similarly we have
$$
\dP(Z_t\ge 0|\cF_t)\le \frac{K}{n'}\cdot
$$
Moreover it is easily seen that $\dP(Z_1=1)\le(K/n')(2/n')$, and $\dP(Z_1\ge 0)\le K/n'$.

As in Lemma \ref{lemma:mkv_upper_bound}, we introduce the Markov chain $\{Z'_t\}_{t\ge 1}$ on state space $\{-1,0,1\}$ specified by the initial distribution
$\dP(Z'_1=1)=(K/n')(2/n')$, $\dP(Z'_1\ge 0)= K/n'$ and by the transition probability matrix $P$ in \eqref{eq:transition_proba_Z'}, that we recall for convenience:
$$
P=\left(
\begin{array}{lll}
1-K/n' & K/n'& 0\\
1-K/n' & (K-2)/n' & 2/n'\\
1-K/n' & (K-1)/n' & 1/n'
\end{array}
\right)
$$
The previous inequalities ensure that we can construct by induction over $t$ a coupled version of the two processes $\{Z_t\}$ and $\{Z'_t\}$ such that $Z_1\le Z'_1$, and for $t\ge 1$, if $Z'_t=-1$ then $Z_t=-1$, and furthermore we have the following implications:
$$
\begin{array}{ll}
Z_t=-1 &\Rightarrow Z_{t+1}\le Z'_{t+1},\\
Z_t=Z'_t&\Rightarrow Z_{t+1}\le Z'_{t+1},\\
(Z_t,Z'_t)=(1,0)&\Rightarrow Z_{t+1}\le Z'_{t+1}.
\end{array}
$$
Thus the only situation when we can have $Z_{t+1}>Z'_{t+1}$ is when $(Z_t,Z'_t)=(0,1)$. That is to say, for each time $t+1$ when process $Z$ hits 1 while chain $Z'$ does not, then at time $t$  chain $Z'$ hits 1 while process $Z$ does not.

Because of this, the number of times $t$ at which process $Z$ hits 1 is upper-bounded by the number of times $t$ at which chain $Z'$ does. Thus \eqref{eq:coupling1} holds, concluding the proof of Lemma \ref{lemma:mkv_upper_bound}. 

\subsection{Proof of Lemma \ref{lemma_1}}

By \eqref{eq:coupling1} and \eqref{squareLR}, $\dE_0(L^2)$ is upper bounded by 
\begin{equation}\label{eq:upper_bound_22}
\dE_0(L^2)\le \dE_0 x^{\sum_{s=1}^{K-1}Z'^+_s}.
\end{equation} 
To evaluate this term, introduce the row vector $F(t):=\{f_z(t)\}_{z\in\{-1,0,1\}}$ where $f_z(t):=\dE_0 x^{\sum_{s=1}^tZ'^+_s}\II_{Z'_t=z}$. We then have
\begin{equation}\label{eq:init}
F(1)=(\dP(Z'_1=-1),\dP(Z'_1=0),x\dP(Z'_1=1))=(1-K/n', K/n'(1-2/n'), x(K/n')(2/n')),
\end{equation}
together with the recurrence relation
\begin{equation}\label{eq:iter}
F(t+1)=F(t)M,
\end{equation}
where 
$$
M=\left(\begin{array}{lll}
1-K/n' & K/n' & 0\\
1-K/n' & K/n'-2/n' & x 2/n'\\
1-K/n' & K/n'-1/n' & x/n'
\end{array}
\right)
$$
Recall now that $x=n/\lambda$ and $n'=n-K$, so that $x/n'$ is asymptotic to $1/\lambda$. Thus the above matrix $M$ reads
$$
M=M_0+(K/n)M_1,
$$
where 
\begin{equation}\label{eq:M_0}
M_0=\left(
\begin{array}{lll}
1 & 0 & 0\\
1 & 0 & 2/\lambda\\
1 & 0 & 1/\lambda
\end{array}
\right),
\end{equation}
and the entries of matrix $M_1$ are $O(1)$. Note that $M_0$ admits eigenvalues $0, 1/\lambda,1$ with respective left eigenvectors
$$
\begin{array}{ll}
u_{0}&:=(1,1,-2),\\
u_{1/\lambda}&:=(-\lambda/(\lambda-1),0,1),\\
u_1&:=(1,0,0).
\end{array}
$$
We shall denote $(\mu_{r},v_r)$ the (eigenvalue,eigenvector) pair of  $M$ obtained by perturbation of the eigenpair $(r,u_r)$ of $M_0$, with $r\in\{0,1/\lambda,1\}$. By the Bauer-Fike theorem (see \cite{MR1477662}, Theorem VI.25.1),  $|\mu_r -r|=O(K/n)$ for all $r$.

Moreover Eq. (1.16), p. 67 in  \cite{Kato:1966:PTL} implies that a normed left (resp., right) eigenvector of $M$ associated to an eigenvalue $\mu_r$ of $M$ differs in norm from a normed left (resp., right) eigenvector of $M_0$ associated to eigenvalue $r$ by $O(K/n)$. We can thus chose $v_r=u_r+O(K/n)$.

Let the decomposition of vector $F(1)$ in the basis provided by the eigenvectors $\{v_r\}$ be given by:
$$
F(1)=\sum_{r\in\{0,1/\lambda,1\}}\alpha_r v_r.
$$
Denote by $e$ the all-ones $3\times 1$ column vector. The upper bound \eqref{eq:upper_bound_22} on $\dE_0(L^2)$ then gives
\begin{equation}\label{eqn_eig_ub}
\begin{array}{lll}
\dE_0(L^2)&\le & F(K-1) e\\
&=&
F(1)M^{K-2}e\\
&=&\sum_{r\in\{0,1/\lambda,1\}} \alpha_r v_r \mu_r^{K-2} e.
\end{array}
\end{equation}
By our choice of eigenvectors $v_r$ such that $|v_r-u_r|=O(K/n)$, and the fact that 
$$
F(1)=(1+O(K/n))u_{1}+O(K/n)u_{1/\lambda}+O(K/n)u_0,
$$
corresponding weights $\alpha_r$ verify $\alpha_1=1+O(K/n)$, $\alpha_{1/\lambda}=O(K/n)$, $\alpha_0=O(K/n)$. 

In the case where $\lambda>1$ and $K=o(\sqrt{n})$, \eqref{eqn_eig_ub} yields
$$
\dE_0(L^2)\le o(1)+(1+o(1))\mu_1^{K-2}=(1+o(1))(1+O(K/n))^{K-2}\le (1+o(1))e^{O(K^2/n)}.
$$
The assumption that $K=o(\sqrt{n})$ then allows to conclude. 

For $\lambda<1$ and $K=\ln(n)/\ln(1/\lambda) -\omega(\ln(\ln(n)))$, \eqref{eqn_eig_ub} yields
$$
\dE_0(L^2)\le (1+o(1))\left(1+O(K/n)\right)^{K-2}+O(K/n) \left(1/\lambda+O(K/n)\right)^{K-2}.
$$
The first term is $1+o(1))$ since $K^2/n=o(1)$. The second term's logarithm is equivalent to 
$$
\ln(K)-\ln(n)+(K-2)\ln(1/\lambda)\le \ln(\ln(n))-\ln(\ln(1/\lambda))-\omega(\ln(\ln(n))),
$$
and goes to $-\infty$ by assumption. 


\subsection{Proof of Lemma \ref{lemma:reconstr_lines_subcritical}}
We place ourselves under $\dP_1$ and condition on the fact that the $K$-path planted in the original Erd\H{o}s-Rényi graph $G_0$ is $k_1^K$. Denote for each $i\in[K]$ by $C_i$ the connected component of node $k_i$ in $G_0$.  Denote by $\cE_i$  the event that $C_i\cap \{\cup_{j\ne i}C_j\}\ne \emptyset$ and by $\cE'_i$ the event that $C_i$ contains a cycle. 

A standard construction of connected components based on a random walk exploration implies the existence of a constant $c>0$ such that for all $\ell\ge 0$, 
\begin{equation}\label{eq:ttt0}
\begin{array}{ll}
\dP(\cE'_i,|C_i|=\ell)&\le \frac{\lambda \ell^2}{n}\dP(|C_i|=\ell)\le \frac{\lambda \ell^2}{n} e^{-c\ell},\\
\dP(\cE_i,|C_i|=\ell)&\le \frac{\ell K}{n}e^{-c\ell},\\
\dP(|C_i|\ge \ell)&\le e^{-c\ell}.
\end{array}
\end{equation}
The first evaluation implies that with high probability, no $C_i$ contains a cycle (i.e. no $\cE'_i$ occurs) when $K=o(n)$. The second evaluation implies that the expected number of $i\in[K]$ such that $\cE_i$ occurs and $|C_i|\ge \ell$ is upper bounded, for some constant $c'>0$, by
$$
\sum_{i\in[K]}\dP(\cE_i,|C_i|\ge \ell)\le \frac{K^2}{n} e^{-c'\ell}.
$$
If $K^2=o(n)$, then this implies that with high probability, no $\cE_i$ occurs. Thus with high probability, there is no cycle in the connected component $C$. Moreover, the third evaluation in \eqref{eq:ttt0} ensures that 
$$
\sum_{i\in K}\dP(C_i\ge \sqrt{K})\le K e^{-c\sqrt{K}}=o(1).
$$
Thus the peeling process applied $\sqrt{K}$ times to $C$ returns exactly the planted $K$-path, except for $\sqrt{K}$ nodes at each of its ends.

If on the other hand, $K^2>o(n)$, we choose $\ell*=\theta\ln(n)$  and deduce from \eqref{eq:ttt0} that with probability $1-O(n^{-2})$, say, there is no $i\in[K]$ such that both $\cE_i$ and $|C_i|\ge \theta \ln(n)$ hold. The peeling process applied $\sqrt{K}$ times to $C$ then returns the planted path, shortened by no more than $\sqrt{K}$ nodes at each end, plus parts of the neighborhoods $C_i$ for which $\cE_i$ occurs. The expected number of nodes returned that do not belong to the planted path is therefore no more than
$$
K\dP(\cE_i)\ell^*=O(\frac{K^2}{n})\theta\ln(n).
$$
This is $o(K)$ under the assumption that $K=o(n/\ln(n))$. The conclusion of the Lemma follows.

\subsection{Proof of Theorem \ref{thm:impossible_reconstr_line}}
We show that Lemma \ref{dvar_control} implies \eqref{overlap_control_1}. First, the optimal overlap is achieved by the {\bf Maximum A Posteriori} (MAP) inference procedure, i.e. by putting in $\hat{\cK}$ the $K$ nodes with the highest probability, conditional on the observed graph $G$, of being in $\cK$. The probability that node $j$ belongs to $\cK$ conditional on $G$ is proportional to the number of $K$-paths in $G$ to which $j$ belongs. We denote by $\cK^*$ the corresponding set.  

Second, when under the alternative distribution $\dP_2:=\dP_0(G\in \cdot| k_1^K\in G, (k_i,i_2^{D},k_{i+D})\in G))$ in \eqref{eq_dvar_1}, the joint distribution of the numbers of $K$-paths going through the nodes $k_1^K$ or through the nodes 
in $k_1^i,i_2^{D},k_{i+D}^K$ are statistically indistinguishable. Thus, letting $N_{\ell}$ (respectively $N'_{\ell}$) denote the number of points of $k_{(\ell-1)M+1}^{\ell M}$ (respectively, $k_{(\ell-1)M+1}^{i},i_2^{D},k_{i+D}^{\ell M}$) that the MAP estimate selects, one has:
$$
\dE_2(N_{\ell})=\dE_2(N'_{\ell}).
$$
Let also $N'_{t,\ell}$ denote the number of points that the MAP estimate selects in $k_{(\ell-1)M+1}^{I(t,\ell)},I_2^D(t,\ell),k_{I(t,\ell)+D}^{\ell M}$. 
Since each of these variables is bounded by $M=L+D$, the variation distance bound \eqref{eq_dvar_1} implies
$$
\dE_1(N_{\ell})\le \dE_1(N'_{t,\ell})+\epsilon M.
$$
Summing these inequalities over $\ell\in[K/M]$ and $t\in[\tau]$ yields
\begin{equation}\label{eq_11}
\tau \sum_{\ell=1}^{K/M}\dE_1(N_\ell)=\tau \;\ov(\cK^*)\le\sum_{t=1}^\tau\sum_{\ell=1}^{K/M}\dE_1(N'_{t,\ell})+\epsilon \tau K.
\end{equation}
However, it holds that:
$$
\sum_{i=1}^K\II_{k_i\in\cK^*}+\sum_{j\in\cup_{t,\ell}I_{2}^{D}(t,\ell)}\II_{j\in \cK^*}\le K.
$$
This entails (using e.g. Bonferroni's inequality):
$$
\sum_{i=1}^K\II_{k_i\in\cK^*}+\sum_{\ell=1}^{K/M}\sum_{r=2}^{D}\sum_{t\in[\tau]}\II_{I_r(t,\ell)\in\cK^*} 
-\sum_{\ell\ne \ell',\ell,\ell'\in[K/M]} |(\cup_{t\in[\tau]}I_2^{D}(\ell))\cap(\cup_{t\in[\tau]}I_2^{D}(\ell'))|\le K.
$$
Taking expectations and using the last statement \eqref{eq_intersections} of the Lemma yields, separating evaluations on event $\cE$ and on its complementary set $\overline{\cE}$:
$$
\ov(\cK^*)+\left\{\sum_{t\in[\tau]}\sum_{\ell=1}^{K/M}\EE_1(N'_{t,\ell})\right\} -\tau L(K/M) -(K/M)^2 O(D^2/n) -\tau K \dP_1(\overline{\cE})\le K.
$$
Summed with the previous equation \eqref{eq_11}, this gives:
$$
(\tau+1) \ov(\cK^*)\le K + K\tau\left(\epsilon+(L/M) +(K/n)(D/M)^2+\dP_1(\overline{\cE})\right).
$$
The announced result follows from $\epsilon\ll 1$, $L\ll D$, $K=o(n)$  and $\dP_1(\overline{\cE})=o(1)$. 

\subsection{Proof of Lemma \ref{dvar_control}, Equation \eqref{eq_dvar_1}} 
We let $\pi_{i}$ denote the set of $\tau$ candidate paths $(k_{i},i_2^{D}(t,\ell),k_{i+D})_{t\in[\tau]}$ of the graph, where for fixed $\ell$, the $\{i_2^{D}(t,\ell)\}_{t\in[\tau]}$ are distinct and in $[n]\setminus k_1^K$. For $i\in[(\ell-1)M+1,\ell-1)M+L]$ these can all be used to construct the set of $\tau$ alternative paths in the $\ell$-th segment of $k_1^K$. We denote by
$$
\pi(\ell)=\cup_{i\in[(\ell-1)M+1,\ell-1)M+L]}\pi_{i}
$$
the corresponding collection. Our construction simply amounts to choosing a set of $\tau$ paths (that we shall call for short a $\tau$-path) uniformly at random from $\pi(\ell)$ in order to construct the alternative $\tau$-path for the $\ell$-th segment, and this independently for each segment. 

Denote $Z_i=|\pi_i|$. Then 
$$
\dE_1(Z_i)=(n-K)(n-K-1)\cdots(n-K-\tau(D-1)+1)\left(\frac{\lambda}{n}\right)^{\tau D}\sim \frac{1}{n^\tau}\lambda^{\tau D},
$$
since we assumed in \eqref{eq:LD_1} that $D\sim C \ln(n)$. Also, by symmetry,
$$
\begin{array}{lll}
\dE_1 Z_i^2 &=& \sum_{i_2^D(t),j_2^D(t)}\dP_1(\forall t\in[\tau],(k_i,i_2^D(t),k_i+D)\in G, (k_i,j_2^D(t),k_{i+D})\in G)\\
&=&\dE_1(Z_i)\sum_{j_2^D(t)}\dP_1(\forall t\in[\tau],(k_i,j_2^D(t),k_{i+D})\in G| \forall t\in[\tau],(k_i,i_2^D(t),k_{i+D})\in G),
\end{array}
$$
where in the last expression we fixed an arbitrary choice $(i_2^D(t))_{t\in[\tau]}$. It follows that:
$$
\dE_1 Z_i^2=(\dE_1(Z_i))^2\dE_1((n/\lambda)^S),
$$
where $S$ is the number of common edges between the fixed $\tau$-path $(k_i,i_2^D(t),k_{i+D})_{t\in[\tau]}$ and the $\tau$-path $(k_i,J_2^D(t),k_{i+D})_{t\in[\tau]}$ where $(J_2^D(t))_{t\in[\tau]}$ is chosen uniformly at random among $(\tau(D-1))$ sequences in $[n]\setminus k_1^K$. 

To control this second moment, we will condition on the number of common edges between each path $J_2^D(t)$ in the randomly selected $\tau$-path at its  beginning and  end with the beginning and end of some of the fixed paths $i_2^D(t')$, that we shall denote by $X_t$ and $Y_t$. These satisfy the constraints $X_t,Y_t\ge 0$, $X_t+Y_t\le D$. For $X_t+Y_t<D$, this forces the choice of $X_t+Y_t$ nodes among the $D-1$ to be chosen for path $J_2^D(t)$; for $X_t+Y_t=D$, this forces all the $D-1$ choices. Moreover, conditionally on $(X_t,Y_t)_{t\in[\tau]}$, the expectation of the variable $(n/\lambda)^S$ verifies
$$
\dE_1((n/\lambda)^S|(X_t,Y_t)_{t\in[\tau]})\le (n/\lambda)^{\sum_{t\in[\tau]}X_t+Y_t}(1+O(D/n))^{\tau D},
$$
by the Markov chain bounds in Lemma \ref{lemma:mkv_upper_bound}. By assumption, $D\ll \sqrt{n}$ so that $(1+O(D/n))^D=1+o(1)$. Thus, accounits for the $\tau!)^2$ choices of path correspondences between the beginnings and ends of the planted and random paths:
$$
\begin{array}{lll}
\dE_1 Z_i^2&\le&(\dE_1(Z_i))^2(\tau!)^2\left[(n/\lambda)^D n^{D-1}+\sum_{x,y\ge 0,x+y<D}(n/\lambda)^{x+y}n^{-(x+y)}(1+o(1))\right]^{\tau}\\
&\le&(\dE_1(Z_i))^2(1+o(1))(\tau!)^2[n \lambda^{-D}+(\sum_{x\ge 0}\lambda^{-x})^2]^2\\
&\le& (\dE_1(Z_i))^2(1+o(1))(\tau!)^2\left(\frac{\lambda}{\lambda -1}\right)^{2\tau}, 
\end{array}
$$
where we used that $n\lambda^{-D}=o(1)$. 

We now evaluate $\dE_1(Z_i Z_j)$ for $i\ne j$. The Markov chain bounding technique of Lemma \ref{lemma:mkv_upper_bound} directly applies to give:
$$
\dE_1(Z_i Z_j)\le (\dE(Z_i))^2(1+o(1)).
$$
Finally we obtain: 
$$
\begin{array}{lll}
\hbox{Var}(|\pi(\ell)|)&=&L \hbox{Var}(Z_i)+L(L-1)\hbox{Cov}(Z_i,Z_j)\\
&\le & \dE_1(Z_i)^2\left[L (1+o(1))(\tau!)^2\left(\frac{\lambda}{\lambda -1}\right)^{2\tau} +L^2 o(1)\right]\\
&\le & \dE_1(|\pi(\ell)|)^2 \left[\frac{O(1)}{L}+o(1)\right].
\end{array}
$$
Since by assumption $L\gg 1$, Tchebitchev's inequality implies that the random variable $|\pi(\ell)|$ concentrates: for some suitable $\epsilon=o(1)$, one has
$$
\dP_1\left(\left|\frac{|\pi(\ell)|}{\dE_1 |\pi(\ell)|}-1\right|\ge \epsilon   \right)\le \epsilon. 
$$
Denote by $\cA$ the event $\cA:=\{|\frac{|\pi(\ell)|}{\dE_1 |\pi(\ell)|}-1|\le \epsilon\}$. It thus has probability at least $1-\epsilon$. Consider a bounded function $f$ of the graph $G$. This concentration result allows us to establish the variation distance bound \eqref{eq_dvar_1} as follows. For some arbitrary candidate $\tau$-path $(i,i_2^D(t))_{t\in[\tau]}$, omitting for brevity the argument $t$ below, write:
$$
\dE_1(f(G)|\cA,\cK=k_1^K,I(\ell)=i,I_2^{D}(\ell)=i_2^{D})=\frac{\dE_1[f(G)\II_{\cA}\II_{(k_i, i_2^D,k_{i+D})\in G}\frac{1}{|\pi(\ell)|}]}{\dE_1(\II_{\cA}\II_{(k_i, i_2^D,k_{i+D})\in G}\frac{1}{|\pi(\ell)|})}.
$$
On $\cA$ one has
$$
\frac{1}{\dE_1|\pi(\ell)|}\frac{1}{1+\epsilon}\le \frac{1}{|\pi(\ell)|}\le \frac{1}{\dE_1|\pi(\ell)|}\frac{1}{1-\epsilon}.
$$
This yields:
$$
\frac{1-\epsilon}{1+\epsilon}\frac{\dE_1[f(G)\II_{\cA}\II_{(k_i, i_2^D,k_{i+D})\in G}]}{\dP_1((k_i, i_2^D,k_{i+D})\in G)} \le \dE_1(f(G)|\cA,\cK=k_1^K,I(\ell)=i,I_2^{D}(\ell)=i_2^{D})\le \frac{1+\epsilon}{1-\epsilon}\frac{\dE_1[f(G)\II_{(k_i, i_2^D,k_{i+D})\in G}]}{\dP_1(\cA\cap(k_i, i_2^D,k_{i+D})\in G)}.
$$
By symmetry over all $\tau$-paths in $\pi(\ell)$, denoting by $Z$ the total number of possible such $\tau$-paths in it ($Z\sim L n^{\tau(D-1)}$), one has 
$$
\dP_1(\cA\cap(k_i, i_2^D,k_{i+D})\in G)=\frac{1}{Z}\dE_1(|\pi(\ell)|\II_{\cA}). 
$$
However by definition of $\cA$ this is no smaller than 
$$
\frac{1}{Z}(1-\epsilon)\dE_1|\pi(\ell)| \dP_1(\cA)\: \ge \:(1-\epsilon)^2\dP_1((k_i, i_2^D,k_{i+D})\in G).
$$
Finally we obtain:
$$
\begin{array}{l}
\frac{1-\epsilon}{1+\epsilon}\left[\dE_1[f(G)|(k_i, i_2^D,k_{i+D})\in G]-||f||_{\infty}\epsilon\right] \le \dE_1(f(G)|\cA,\cK=k_1^K,I(\ell)=i,I_2^{D}(\ell)=i_2^{D})
\le \cdots \\
\cdots \le 
\frac{1+\epsilon}{(1-\epsilon)^3}\dE_1[f(G)|(k_i, i_2^D,k_{i+D})\in G].
\end{array}$$
The result of Equation \eqref{eq_dvar_1} follows.
\subsection{Proof of of Lemma \ref{dvar_control}, Equation \eqref{eq_intersections}} 
We define the event $\cE$ as, for some suitable constant $\alpha=\Omega(1)$:
\begin{equation}\label{eq_def_cE}
\cE:=\cap_{\ell\in[K/M]}\cE_{\ell},\hbox{ where }\cE_{\ell}:=\{|\pi(\ell)|\ge \alpha \EE_1|\pi(\ell)|\}.
\end{equation}
In the below display we let $I_2^D(\ell)=\cup_{t\in [\tau]}I_2^D(t,\ell)$, and $I_2^D(\ell)\cap I_2^D(\ell')$ the intersection of the two corresponding sets of nodes.
We then have for arbitrary $\ell\ne \ell'\in[K/M]$:
$$
\dE_1(|I_2^D(\ell)\cap I_2^D(\ell')|\II_{\cE})
=
\sum_{i}\sum_{j}\dE_1\left(\frac{1}{|\pi(\ell)|\cdot|\pi(\ell')|}\sum_{i_2^D\in\pi_i}\sum_{j_2^D\in \pi_j}|i_2^D\cap j_2^D|\II_{\cE}\right)
$$
where the first summations are over  $i\in[M(\ell-1)+1,M(\ell-1)+L]$ and $j\in [M(\ell'-1)+1,M(\ell'-1)+L]$. The expectation in the right-hand side does not depend on $i$ and $j$, by symmetry. Moreover, on $\cE$ we can upper bound the fraction in the expectation by $1/(\alpha\dE_1|\pi(\ell)|)^2$. Thus fixing some arbitrary $i\ne j$:
$$
\begin{array}{ll}
\dE_1(|I_2^D(\ell)\cap I_2^D(\ell')|\II_{\cE})&\le \frac{L^2}{(\alpha\dE_1|\pi(\ell)|)^2}\dE_1\left(\sum_{i_2^D\in\pi_i}\sum_{j_2^D\in \pi_j}|i_2^D\cap j_2^D|\right)\\
&\le 
 \frac{L^2}{(\alpha\dE_1|\pi(\ell)|)^2}\sum_{i_2^D, j_2^D}\dE_1\left(\II_{(k_i,i_2^D,k_{i+D})\in G}\II_{(k_j,j_2^D,k_{j+D})\in G}|i_2^D\cap j_2^D|\right),
\end{array}
$$
where summation is over all pairs of lists $i_2^D$ and $j_2^D$ of $\tau(D-2)$ distinct elements in $[n]\setminus k_1^K$. Denote by $J_2^D$ one such list selected uniformly at random, and by $i_2^D$ a fixed, arbitrary choice of one such list. One then has, recalling the expression of $\dE_1|\pi(\ell)|=L(\lambda/n)^{\tau D} (n-K)\cdots(n-K-\tau(D-1)+1)$:
\begin{equation}\label{eq:paths_nodes_inter_1}
\dE_1(|I_2^D(\ell)\cap I_2^D(\ell')|\II_{\cE})\le  \frac{1}{\alpha^2}\dE_1\left(\left(\frac{n}{\lambda}\right)^S|i_2^D\cap J_2^D|\right),
\end{equation}
where $S$ denotes the number of edges  in common between the two $\tau$-paths $i_2^D$ and $J_2^D$.

As in Lemma \ref{lemma:mkv_upper_bound}, we now define the Markov chain $\{Z'_t\}_{t\ge 0}$ on the three states $\{-1,0,1\}$, with transition probabilities given by the matrix
$$
P:=\left( 
\begin{array}{lll}
1-D/n' & D/n' & 0\\
1-D/n' & (D-2)/n' & 2/n'\\
1-D/n' & (D-1)/n' & 1/n'
\end{array}
\right),
$$
where $n'=n-K-D$, and with initial condition $Z'_0=-1$. These states are interpreted as follows: $Z'_t=-1$ if $J_{t+1}\notin i_2^D$, $Z'_t=0$ if $J_t\notin i_2^D$ and $J_{t+1}\in i_2^D$, and $Z'_t=1$ if $J_t,J_{t+1}\in i_2^D$. The same coupling argument as for Lemma \ref{lemma:mkv_upper_bound} implies, letting $x=n/\lambda$, the following, where the subscript in the second expectation specifies the initial state of the Markov chain $\{Z'_t\}$:
$$
\dE_1\left(\left(\frac{n}{\lambda}\right)^S|i_2^D\cap J_2^D|\right)\le \dE_{-1}\left( x^{\sum_{i=1}^{\tau(D-1)}Z_i'^+}\sum_{j=1}^{\tau(D-1)}\II_{Z'_j\ge 0}  \right).
$$
We introduce the notation $F_z(t)=(F_{z,-1}(t),F_{z,0}(t),F_{z,1}(t))$, where 
$$
F_{z,y}(t):=\dE_z\left( x^{\sum_{s=1}^tZ'^+_s}\II_{Z'_t=y}  \right). 
$$
It readily follows that 
$$
F_z(t)=(\II_{z=-1},\II_{z=0},\II_{z=1}) M^t,
$$
where 
$$
M:=\left( 
\begin{array}{lll}
1-D/n' & D/n' & 0\\
1-D/n' & (D-2)/n' & x*(2/n')\\
1-D/n' & (D-1)/n' & x/n'
\end{array}
\right).
$$
This matrix $M$ reads, as previously, $M_0+O(D/n)$ where $M_0$ is given by \eqref{eq:M_0}. 

Write then, using Markov's property:
$$
\begin{array}{ll}
\displaystyle \dE_{-1}\left( x^{\sum_{i=1}^{\tau(D-1)}Z'^+_i}\sum_{j=1}^{\tau(D-1)}\II_{Z'_j\ge 0}  \right)& =
\displaystyle \sum_{j=1}^{\tau(D-1)}\sum_{z\in\{0,1\}}\dE_{-1}\left( x^{\sum_{i=1}^{j}Z'^+_i}\II_{Z'_j=z}\right)\dE_{z}\left(x^{\sum_{i=1}^{\tau(D-1)-j}Z'^+_i}  \right)\\
&\displaystyle =\sum_{j=1}^{\tau(D-1)}\sum_{z\in\{0,1\}} F_{-1,z}(j)\sum_{y=-1,0,1}F_{z,y}(\tau(D-1)-j).
\end{array}
$$
Previously given perturbation results give the existence of coefficients $[\beta_{z,r}]_{z\in\{-1,0,1\},r\in\{0,1/\lambda,1\}}$ all in $O(1)$ such that
$$
F_z(0)=\sum_{r\in\{0,1/\lambda,1\}}\beta_{z,r} v_r.
$$
It follows that
$$
F_{z}(\tau(D-1)-j)=\sum_{r\in\{0,1/\lambda,1\}} \beta_{z,r} \mu_r^{\tau(D-1)-j} v_r=O(1),
$$
since $|\mu_r|\le 1+O(D/n)$ and $D^2\ll n$. It follows that
$$
\begin{array}{ll}
\dE_{-1}\left( x^{\sum_{i=1}^{\tau(D-1)}Z'^+_i}\sum_{j=1}^{\tau(D-1)}\II_{Z'_j\ge 0}  \right)&
\displaystyle =\sum_{j=1}^{\tau(D-1)}F_{-1}(j)\left(\begin{array}{c}0\\1\\1\end{array}\right)\times O(1)\\
&\displaystyle =\sum_{j=1}^{\tau(D-1)} \sum_{r\in\{0,1/\lambda,1\}}\beta_{-1,r}\mu_r^j v_r\left(\begin{array}{c}0\\1\\1\end{array}\right)\times O(1) .
\end{array}
$$
Since $F_{-1}(0)=u_1$, it holds that $\beta_{-1,1}=1+O(D/n)$, and $\beta_{-1,r}=O(D/n)$ for $r=0,1/\lambda$. The terms with $r=0,1/\lambda$ in the previous expression thus contribute at most $O(D^2/n)$. The terms with $r=1$ give
$$
\sum_{j=1}^{\tau(D-1)}\beta_{-1,r}\mu_1^j v_1\left(\begin{array}{c}0\\1\\1\end{array}\right)\times O(1)=O(D^2/n),
$$
by using the fact that $v_1=(1,0,0)+O(D/n)$. 

It remains to prove that the event $\cE$ defined in \eqref{eq_def_cE} is such that $\dP_1(\cE)=1-o(1)$. It will suffice to prove that for all $\ell\in[K/M]$, $\dP_1(\cE_{\ell})\ge 1-o(M/K)$. To show this we shall leverage Janson's inequality, as described in \cite{boucheron2013concentration}, p.205, Theorem 6.31. Applied to the random variable $|\pi(\ell)|$, it guarantees that for all $0\le t\le \dE|\pi(\ell)|$ one has
\begin{equation}\label{eq:janson}
\dP_1(|\pi(\ell)|\le \dE|\pi(\ell)|-t)\le e^{-t^2/(2\Delta)},
\end{equation}
where $\Delta$ is the expected number of ordered pairs of $\tau$-paths $(P,Q)$ in $\pi(\ell)$ that share at least an edge. Paralleling our previous bound on the variance of $|\pi(\ell)|$, we distinguish the pairs of $\tau$-paths $(P,Q)$ according to whether they share the same starting point $i\in[(\ell-1)M+1,(\ell-1)M+L]$ or not to write $\Delta=\Delta_1+\Delta_2$, and obtain:
$$
\begin{array}{l}
\Delta_1\le L \frac{\lambda^{2D\tau}}{n^{2\tau}}(1+o(1))(\tau!)^2\left(\frac{\lambda}{\lambda-1}\right)^{2\tau2},\\
\Delta_2\le L^2 \frac{\lambda^{2D\tau}}{n^{2\tau}}O\left(\frac{D^2}{n}\right).
\end{array}
$$
We moreover have that $\dE|\pi(\ell)|\sim L\frac{\lambda^{D\tau}}{n^{\tau}}$, so that
$$
\frac{(\dE|\pi(\ell)|)^2}{\Delta}\ge \frac{\Omega(1)}{\frac{1}{L}+\frac{D^2}{n}}\cdot
$$
By our choices \eqref{eq:LD_1} for $L$ and $D$, this lower bound is also $\Omega(1)L=C\Omega(1)\ln(n)$. Taking $t=(1-\alpha)\dE|\pi(\ell)|$ for some $\alpha\in(0,1)$  in \eqref{eq:janson}, we obtain
$$
\dP_1\left(|\pi(\ell)|\le \alpha \dE|\pi(\ell)|\right)\le \exp(-\alpha^2 C\Omega(1)\ln(n)/2).
$$
It readily follows that, for sufficiently large $C$, this probability can be made $o(n^{-3})$ (say), which suffices to conclude the proof of the Lemma.

\section{Proofs for planted $D$-ary trees}\label{sec:appendix_dary}

\subsection{Proof of Lemma \ref{lem:rec_relation}}

\begin{proof}
	The property $p_1 = 1$ is trivial. For $h \geq 1$, let $Z ~ Poi(\lambda)$ be the number of children of the root $o$. Each of the $Z$ children has independently a probability $p_h$ of being the root of a $D$-ary tree of height $h$. Therefore, if we define $Z_h$ to be the number of such children, we have
	\[ \mathcal L(Z_h \,|\, Z) \sim \mathrm{Bin}(Z, p_h). \]

	By the splitting property of Poisson random variables, $Z_h$ follows the distribtution $\Poi(\lambda p_h)$. But $T$ contains a $D$-ary tree of height $h$ rooted in $o$ if and only if $Z_h \geq D$, and the lemma follows.
\end{proof}

\subsection{Proof of Theorem \ref{thm:gw_height}}

\begin{proof}
	Let $h_0 > 0$ to be fixed later on ; there exists $\kappa > 0$  such that
	\begin{equation}
		\frac{(\lambda x)^D}{D!} \leq \psi_D(\lambda x) \leq e^{\kappa (D-1) p_{h_0}}\frac{(\lambda x)^D}{D!}
	\end{equation}
	for all $x \leq \epsilon$. Therefore, for $h \geq h_0$, one has

	\begin{equation}\label{recurrence_ineq}
		D\ln(p_h) + (D-1)c_{\lambda, D} \leq \ln(p_{h+1}) \leq D\ln(p_h) + (D-1)(c_{\lambda, D} + \kappa\, p_{h_0}).
	\end{equation}

	Iterating inequality (\ref{recurrence_ineq}), we get that for all $h \geq 0$ :
	\begin{equation}\label{absolute_ineq}
		D^h\left(\ln(p_{h_0}) + c_{\lambda,D} \right) - c_{\lambda,D}  \leq \ln(p_{h + h_0}) \leq D^h\left(\ln(p_{h_0}) +  c_{\lambda,D} + \kappa\, p_{h_0} \right) -  c_{\lambda,D} - \kappa\, p_{h_0}
	\end{equation}

	Choose $h_0$ such that $\alpha := -(\ln(p_{h_0}) + c_{\lambda,D} + \kappa\, p_{h_0}) > 0$, and let $$h_* = \left\lfloor \frac{\ln\left(\frac{\ln(n)}\alpha\right)}{\ln(D)} \right\rfloor + h_0$$

	Then $h_* + 1 = \frac{\ln\left(\frac{\ln(n)}\alpha\right)}{\ln(D)} + h_0 + \delta$ for some $\delta > 0$. Thus, using (\ref{absolute_ineq}), we find
	\[ \ln(p_{h_* + 1}) \leq  - D^\delta\ln(n) -  c_{\lambda,D} - \kappa\, p_{h_0}\]
	which yields that $p_{h_* + 1} = o\left(\frac1n\right)$ as required.

	On the other hand, for almost all $\lambda$ there is a choice of $h_0$ such that
	\[ h_* <  \frac{\ln\left(\frac{\ln(n)}\alpha\right)}{\ln(D)} + h_0 - \ln\left(\frac\alpha{\ln(p_{h_0}) + c_{\lambda, D}} \right) \]
	by continuity of the right-hand side. Then, for some $\delta' > 0$, we have
	\[ \ln(p_{h_*}) \geq -D^{-\delta'}\ln(n) - c_{\lambda, D}  \]
	which implies the second result of Theorem \ref{thm:gw_height}.

\end{proof}

\subsection{Proof of Lemma \ref{lem:coupling}}

This lemma is a classical result in sparse random graph theory (see e.g.  \cite{DBLP:conf/focs/BordenaveLM15}) ; we reproduce it here for the sake of self-containedness. First, a result on the size of neighbourhoods in $G$ :

\begin{lemma}[Lemma 29 in \cite{DBLP:conf/focs/BordenaveLM15}]\label{lem:neighbourhood_size}
  For a vertex $v$ in $G$, let $S_t(v)$ denote the size of the $t$-neighbourhood of $v$. Then there exists a constant $C$ such that with high probability, for every vertex $v \in G$ and $t \geq 0$ :

  \[ S_t(v) \leq C\ln(n)\alpha^t \]
\end{lemma}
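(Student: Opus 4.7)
The plan is to reduce the uniform neighbourhood bound to a single-process tail estimate for a Galton--Watson (GW) branching process, and then take a union bound over vertices and depths. First, I would couple the BFS from any vertex $v$ to a GW tree: let $Y_s := S_s(v) - S_{s-1}(v)$ denote the $s$-th BFS layer size. Conditionally on the history, each of the $n - S_{s-1}(v)$ unexplored vertices attaches to the $(s+1)$-th layer independently with probability at most $1-(1-\lambda/n)^{Y_s} \le \lambda Y_s/n$. Hence $Y_{s+1}$ is stochastically dominated by a sum of $Y_s$ independent $\Bin(n,\lambda/n)$ offspring, themselves dominated by $\Poi(\lambda)$ offspring. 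Iterating, $(Y_s)_{s\ge 0}$ is dominated by the generation sizes $(Z_s)_{s\ge 0}$ of a GW process with $\Poi(\lambda)$ offspring and $Z_0 = 1$.

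Next I would prove the tail estimate: for $\alpha \ge \lambda$ there exist constants $c, C_0 > 0$ such that $\dP(Z_t \ge C_0 \ln(n)\,\alpha^t) \le n^{-3}$ uniformly in $t \ge 0$. The moment generating function $\phi_t(s) := \dE[e^{sZ_t}]$ satisfies the recursion $\phi_{t+1}(s) = \phi_t(g(s))$ with $g(s) := \lambda(e^s - 1)$, so $\phi_t(s) = \exp(g^{(t)}(s))$ since $Z_0 = 1$. Choosing $s_t := c/\lambda^t$ with $c > 0$ sufficiently small, the linearisation $g(s) = \lambda s + O(s^2)$ combined with a short induction gives $g^{(k)}(s_t) \le 2c\,\lambda^{k-t}$ for all $k \le t$, so that $\phi_t(s_t) \le K$ for an absolute constant $K$. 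Markov's inequality then yields $\dP(Z_t \ge y) \le K\exp(-cy/\lambda^t)$; plugging $y = C_0 \ln(n)\,\alpha^t$ with $\alpha \ge \lambda$ makes the exponent at least $cC_0 \ln(n)$, giving the claimed bound for $C_0$ large.

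To conclude, I would sum the layer bounds: $S_t(v) \le \sum_{s=0}^t Z_s \le (t+1) C_0 \ln(n)\,\alpha^t$, and absorb the factor $(t+1)$ by slightly enlarging $\alpha$. The conclusion is vacuous once $t \ge T^* := \lceil \log_\alpha n \rceil$ because $S_t(v) \le n$ always. A union bound over $v \in [n]$ and $t \in \{0, 1, \ldots, T^*\}$ then gives total failure probability $\le n T^* \cdot n^{-3} = o(1)$. The main technical hurdle is the MGF step: a naive second-moment argument would only yield an $n^{-c}$ tail at scale $\sqrt n\,\lambda^t$, far too weak for a union bound over $n$ starting vertices. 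The $\ln(n)$-scale concentration requires iterating the MGF recursion exactly at the critical scale $s_t = c/\lambda^t$, where the decay of $s_t$ per generation is precisely cancelled by the offspring-mean growth factor $\lambda$, keeping the cumulative nonlinear error $O(1)$ and producing the needed exponential tail.
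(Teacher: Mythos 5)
The paper does not prove this lemma itself; it cites it verbatim from Bordenave--Lelarge--Massouli\'e (their Lemma 29), so there is no ``paper's proof'' to compare against. Your argument is a self-contained proof, and the core mechanism---iterating the MGF recursion $\phi_{t+1}(s)=\phi_t(g(s))$, $g(s)=\lambda(e^s-1)$, at the critical scale $s_t = c/\lambda^t$ so that the drift $g'(0)=\lambda$ exactly cancels the decay of $s_t$---is indeed the right way to get a $\ln(n)$-scale tail instead of the useless $n^{1/2}$-scale tail from Chebyshev. The overall architecture (GW coupling, uniform-in-$t$ tail bound, union bound over $v$ and $t$, truncating at $t^* = \lceil\log_\alpha n\rceil$) is correct.

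Two points need fixing, one cosmetic and one substantive.

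First, the claim that $\mathrm{Bin}(n,\lambda/n)$ is \emph{stochastically} dominated by $\Poi(\lambda)$ is false: $\dP(\mathrm{Bin}(n,\lambda/n)=0)=(1-\lambda/n)^n < e^{-\lambda}$, so the binomial puts \emph{more} mass on $\{\geq 1\}$, which breaks stochastic domination. What you actually use, though, is domination of the moment generating function, and that does hold: $(1+\tfrac{\lambda}{n}(e^s-1))^n \leq e^{\lambda(e^s-1)}$ for $s\geq 0$. So your conclusion $\dE[e^{sY_t}] \leq e^{g^{(t)}(s)}$ is correct, but the justification should invoke MGF domination (equivalently, couple to $\Poi(\lambda')$ with $\lambda' = -n\ln(1-\lambda/n) = \lambda + O(1/n)$ and absorb this into a slightly larger $\alpha$).

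Second, and more substantively, the choice $s_t = c/\lambda^t$ and the induction bound $g^{(k)}(s_t) \leq 2c\lambda^{k-t}$ implicitly require $\lambda \geq 1$: for $\lambda < 1$ one has $s_t = c\lambda^{-t} \to \infty$, so $g(s_t)=\lambda(e^{s_t}-1)$ blows up at the very first iteration and the MGF bound fails outright. This is not a cosmetic issue, since the paper invokes this lemma in the regime $\lambda < \lambda_D$, which for $D=2$ includes $\lambda < 1$. The fix is easy but uses a different calculation: for $\lambda < 1$ take a constant $s_t \equiv s_0$; then $g^{(k)}(s_0)$ decreases to $0$ (the fixed point $0$ is attracting since $g'(0)=\lambda<1$), so $\phi_t(s_0)$ is uniformly bounded and $\dP(Z_t \geq C\ln n) \leq n^{-3}$ for $C$ large. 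This gives $S_t = \sum_{s\leq t} Z_s \leq (t+1)C\ln n$, which suffices for any $\alpha > 1$; alternatively, for $\lambda<1$ one can short-circuit the whole calculation by the standard fact that all components of $\cG(n,\lambda/n)$ have size $O(\ln n)$ w.h.p., giving the lemma with $\alpha=1$. You should split the proof into the two cases $\lambda \geq 1$ (your MGF argument) and $\lambda < 1$ (constant $s$, or component-size bound).
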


We'll also use a bound on the number of vertices whose neighbourhood contains a cycle ; its proof, as well as the preceding lemma, can be found in \cite{DBLP:conf/focs/BordenaveLM15}.

\begin{lemma}[Lemma 30 in \cite{DBLP:conf/focs/BordenaveLM15}]
  Assume that $\ell = o(\ln(n))$. Then w.h.p there are at most $\ln(n)\lambda^{2\ell}$ vertices whose $\ell$-neighbourhood contains a cycle. Moreover, with high probability the graph $G$ is $\ell$ tangle-free, i.e. no vertex has more than one cycle in its $\ell$-neighbourhood.
\end{lemma}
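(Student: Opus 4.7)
The plan is to establish both statements via first-moment / Markov arguments, leveraging the sparsity of $\mathcal G(n,\lambda/n)$ and the high-probability neighborhood size bound of Lemma \ref{lem:neighbourhood_size}. The guiding intuition is that in such a sparse graph the $\ell$-neighborhood of a typical vertex is tree-like: once a BFS spanning tree of $B_\ell(v)$ is revealed, any additional edge within $B_\ell(v)$ appears independently with probability $\lambda/n$, so the expected number of cycle-closing ``surplus'' edges is of order $|B_\ell(v)|^2\lambda/n$, which is tiny because $|B_\ell(v)|$ is polylogarithmic when $\ell=o(\ln n)$.

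For the first assertion, I would fix a vertex $v$ and condition on the high-probability event $\mathcal A$ of Lemma \ref{lem:neighbourhood_size} that $|B_\ell(v)|\le C\ln(n)\lambda^\ell$ for every $v$. Exploring $B_\ell(v)$ by BFS and revealing edges only when first encountered, the number of cycles in $B_\ell(v)$ is stochastically dominated by a binomial counting the remaining $\binom{|B_\ell(v)|}{2}$ potential chords, each present with probability $\lambda/n$. Hence, on $\mathcal A$,
\begin{equation*}
\dP_0\bigl(B_\ell(v)\text{ contains a cycle}\mid\mathcal A\bigr) = O\bigl(\ln(n)^2\lambda^{2\ell+1}/n\bigr).
\end{equation*}
Summing over $n$ vertices, the expected number of $v$'s whose $\ell$-neighborhood contains a cycle is $O(\ln(n)^2\lambda^{2\ell+1})$. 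Markov's inequality then yields, with probability $1-o(1)$, at most $\ln(n)\lambda^{2\ell}$ such vertices (after absorbing the remaining polylogarithmic factor into the $\ln(n)$ prefactor, which one can do since $\ell=o(\ln n)$).

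For the tangle-freeness assertion, the same strategy applies to the event that $B_\ell(v)$ contains two distinct cycles. This requires at least two surplus edges in $B_\ell(v)$, so on $\mathcal A$ the conditional probability is at most
\begin{equation*}
\binom{|B_\ell(v)|}{2}^2\!\left(\frac\lambda n\right)^2 = O\bigl(\ln(n)^4 \lambda^{4\ell}/n^2\bigr),
\end{equation*}
and summing over $v$ gives an expected count of $O(\ln(n)^4 \lambda^{4\ell}/n)=o(1)$ because $\lambda^{4\ell}=n^{o(1)}$ under the assumption $\ell=o(\ln n)$. Markov then forbids any such vertex w.h.p., giving $\ell$-tangle-freeness.

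The main obstacle is the conditioning subtlety in the BFS argument: once one reveals the BFS tree of $B_\ell(v)$, the remaining potential edges are not quite i.i.d.\ Bernoulli$(\lambda/n)$ because of the conditioning on which edges \emph{did not} appear during the exploration (they would have produced new tree edges). The cleanest fix is to replace the informal exploration with a deterministic enumeration over all rooted unicyclic (respectively bicyclic) subgraphs $H\ni v$ of depth $\le\ell$ and compute $\dE_0[X_H]=(n)_{|V(H)|-1}(\lambda/n)^{e(H)}$ directly; the counting of such $H$ on $k$ vertices is standard (Cayley-type) and produces the same $\lambda^{2\ell}$ and $\lambda^{4\ell}/n$ scalings as above. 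The assumption $\ell=o(\ln n)$ is used precisely to keep the combinatorial prefactors negligible against $n$.
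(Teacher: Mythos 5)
The paper does not prove this statement: it is quoted verbatim as Lemma~30 of \cite{DBLP:conf/focs/BordenaveLM15} and the reader is referred there. So you are being compared against that external argument, which is a direct first-moment enumeration over small ``tadpole'' subgraphs.

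Your second assertion (tangle-freeness) is fine: the crude worst-case substitution $|B_\ell(v)|\le C\ln(n)\lambda^\ell$ gives an expected count $O(\ln(n)^4\lambda^{4\ell}/n)$, and since $\lambda^{4\ell}=n^{o(1)}$ the factor $1/n$ kills everything, so Markov closes it. The gap is in the first assertion, and it is quantitative, not merely a matter of the BFS-conditioning subtlety you flag at the end. Conditioning on the event $\mathcal A$ of Lemma~\ref{lem:neighbourhood_size} and substituting the union-bound worst case $|B_\ell(v)|\le C\ln(n)\lambda^\ell$ gives you $\dE_0[\#\{v:\ B_\ell(v)\text{ has a cycle}\}]=O\bigl(\ln(n)^2\lambda^{2\ell+1}\bigr)$. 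Applying Markov at the threshold $\ln(n)\lambda^{2\ell}$ then yields an error probability of order $\lambda\ln(n)$, which diverges. The remark that one can ``absorb the remaining polylogarithmic factor into the $\ln(n)$ prefactor since $\ell=o(\ln n)$'' is not valid: the offending factor is $\ln(n)$ in front of a fixed-order term, not something that $\lambda^{2\ell}=n^{o(1)}$ can swallow (take $\ell=1$ to see it fail outright). The source of the overshoot is that $C\ln(n)\lambda^\ell$ is a \emph{uniform over all $v$} high-probability bound, whereas what you need inside a per-vertex expectation is the \emph{typical} size; the correct quantity is $\dE_0\bigl[\binom{|B_\ell(v)|}{2}\bigr]\cdot\lambda/n$, and since $\dE_0[|B_\ell(v)|^2]=O(\lambda^{2\ell})$ (no $\ln(n)$), the per-vertex cycle probability is $O(\lambda^{2\ell+1}/n)$ and the expected number of bad vertices is $O(\lambda^{2\ell+1})$, from which Markov at $\ln(n)\lambda^{2\ell}$ gives $O(\lambda/\ln n)=o(1)$ as desired.

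Equivalently, the enumeration you sketch at the end (sum $\dE_0[X_H]$ over rooted tadpoles $H\ni v$ of depth $\le\ell$) is exactly the right move and gives $\dE_0[\#\text{bad }v]=O(\ell\,\lambda^{2\ell})$, after which Markov gives $O(\ell/\ln n)=o(1)$. But you present that computation as an optional cleanup for the BFS conditioning issue rather than as the actual engine of the bound; as written, the argument you carry out does not establish the threshold $\ln(n)\lambda^{2\ell}$. To repair the proof: drop the conditioning on $\mathcal A$ in the first assertion and either compute $\dE_0[|B_\ell(v)|^2]$ via the branching-process comparison of Lemma~\ref{lem:coupling}, or carry out the tadpole enumeration you already outlined.
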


We can now prove the first part of our lemma : consider the classical breadth-first exploration process which starts with $A_0 = \{v\}$ and at step $t \leq 0$, considers (if possible) a vertex $v_t \in A_t$ at minimal distance from $v$ and reveals its neighbors $N_{t+1}$ in $[n] \setminus \bigcup_t A_t$. It then updates $A_{t+1}$ as $A_t \cup N_{t+1}$ and repeats the process. We denote by $\cF_t$ the filtration generated by $A_0, \ldots, A_t$.

\begin{proof}(First part of Lemma \ref{lem:coupling}). Let $\tau$ be the stopping time at which $(G, v)_\ell$ has been revealed.By the two previous lemmas, with probability at least $1 - c\lambda^{2\ell}/n$, the neighbourhood $(G, v)_\ell$ is a tree. Therefore, we can mirror the discovery process in $(T, o)$, where at each step we discover the children of $v_t$. To establish the desired coupling result, we then only need to focus on the number of children of each node.

Given $\cF_t$, the number of discovered neighbors $y_{t+1}$ of the node $v_t$ has distribution $\mathrm{Bin}(n_t, \lambda/n)$, where
\[ n_t = n - \sum_{s=0}^t{y_s} \]

Therefore, given $\cF_t$, the total variation distance between the number of children of $v_t$ in $(G, v)_\ell$ and in $(T, o)_\ell$ is
\[ \left|\mathrm{Bin}(n_t, \frac\lambda n) - \Poi(\lambda)\right|_\mathrm{var} \]

The Stein-Chen method (see for example \cite{BarbourChen05}) yields that
\[ \left|\mathrm{Bin}(n_t, \frac\lambda n) - \Poi\left(\lambda\frac{n_t}n\right)\right|_\mathrm{var} \leq \frac\lambda {n}, \]
and a classical bound for Poisson law (see again \cite{BarbourChen05}) that
\[ \left|\Poi\left(\lambda\frac{n_t}n\right) - \Poi(\lambda) \right|_\mathrm{var} \leq \lambda\left(1 - \frac{n_t}n\right) \]

From Lemma \ref{lem:neighbourhood_size}, we find that $n_t \geq n - C\ln(n) \lambda^\ell$ with probability greater than $1 - 1/n$, and thus 
\[ \left|P_{t+1} - Q_{t+1}\right|_\mathrm{var} \leq \frac\lambda n + \lambda \frac{C\ln(n)\lambda^\ell}n, \]

where $P_{t+1}$ is the distribution of $y_{t+1}$ given $\cF_t$ and $Q_{t+1}$ is a $\Poi(\lambda)$ random variable independent of $\cF_t$. This finishes the proof of the first part of the lemma.
\end{proof}

For the second part, note that there exists a coupling $(X, X')$ such that $X \sim \Poi(\lambda)$, $X' \sim \Poi(\lambda')$ and $X' > X$ a.s. (take for example $X' = X + Z$ where $Z \sim \Poi(\lambda' - \lambda)$).

The proof is then straightforward : for every vertex $v$, we produce a coupling between the exploration process of $(G, v)_\ell$ and $(T', o')_\ell$ such that at each step $t$, the number of neighbors $y_t$ of $v_t$ in $G$ is less than in $T'$.

\subsection{Proof of Theorem \ref{thm:dary_height}}

\begin{proof}
	We first apply the first part of Lemma \ref{lem:coupling} to $\ell = \underline h = O(\ln\ln(n))$. Then, for at least $n - O(\ln(n)^\alpha)$ vertices $v$ (for some $\alpha > 0$), there is a coupling between $(T, o)_{\underline h}$ and $(G, v)_{\underline h}$. Since in $(T, o)_{\underline h}$, there is a copy of $\Gamma$ in $(T, o)_{\underline h}$ with probability $\Omega(n^{-c})$. It follows that w.h.p there is $\omega(1)$ copies of $\Gamma$ in $G$.

	\medskip

	Now, assume that $h = \overline h + C$, where $C$ is large enough such that for some $\lambda' > \lambda$, there are no trees of height $h$ in $(T', o')$ with probability $1 - o(1/n)$.

	For every $v \in G$ such that the $h$-neighbourhood of $v$ is a tree, we can produce a coupling of $(G, v)_h$ and $(T', o')_h$ such that $(G, v)_h \subseteq (T', o')_h$ with probability 1. Thus, with high probability, no vertex whose $h$-neighbourhood is a tree contains a copy of $\Gamma$ in said neighbourhood.

	Assume now that there is one cycle in the $h$-neighbourhood of $v$. With high probability, there is only one cycle going through $v$ in the neighbourhood. Thus, there are only two vertices in the neighbors of $v$ whose offspring contains a cycle. With probability $1 - O(n^{-c})$, no other neighbour of $v$ is the root of a $D$-ary tree of height $h-1$. If $D > 2$, then there is no copy of $\Gamma$ rooted in $v$ ; if $D = 2$, then both neighbors of $v$ in the cycle must be roots of disjoints binary trees of size $h-1$, in which case the cycle edge does not help.

	To summarize, the probability of presence of a copy of $\Gamma$ rooted at $v$ is upper bounded by $o(1/n)$ if the $h$-neighbourhood of $v$ is cycle-free, and by $O(n^{-c})$ if it is not. Since there are $O(\ln(n)^\alpha)$ such vertices, w.h.p there is no copy of $\Gamma$ in $G$.
\end{proof}

\subsection{Proof of Lemma \ref{lem:likelihood_dary}}

\begin{proof}
  In view of Lemma~\ref{lem:likelihood_ratio}, we aim to bound the ratio 
  $$\dE_0(L^2) = \frac{\dE_0(X_\Gamma^2)}{\dE_0(X_\Gamma)^2}$$.
  As before, let $\Gamma_1, \ldots, \Gamma_m$ be the copies of $\Gamma$ in the complete graph $K_n$, and let $X_i = \II_{\Gamma_i \in G}$. 

	We follow the proof sketch from \cite{bollobas2001random} : write 
	\begin{equation}
		\dE_0\left(X_{\Gamma}^2\right) = \sum_{i,j}\dE_0\left(X_iX_j\right) = \dE' + \dE'',
	\end{equation}
	where $\dE'$ is the sum over $\Gamma_i, \Gamma_j$ having disjoint vertex sets.
	
	We can easily compute $\dE'$ :
	\[ \dE' = \dbinom n K \dbinom{n-K}K \left(\frac{K!}{|\mathrm{Aut}(\Gamma)|}\right)^2 p^{2K-2} \sim \frac{n^{2K}p^{2K-2}}{|\mathrm{Aut}(\Gamma)|^2} \sim \dE_0\left(X_{\Gamma}\right)^2  \]

	We therefore need to show that $\dE'' = o\left(\dE_0\left(X_{\Gamma}\right)^2\right)$ ; to this end, note that if $\Gamma_i$ and $\Gamma_j$ are such that $v(\Gamma_i\cup \Gamma_j) = s$, then $e(\Gamma_i\cap \Gamma_j) \leq 2K - s - 1$ (since $\Gamma_i \cap \Gamma_j$ is a forest of size $2K - s$) and therefore $e(\Gamma_i \cup \Gamma_j) \geq s - 1$.

	Grouping the terms of $\dE''$ by the size of $\Gamma_i \cup \Gamma_j$, we get

	\begin{align*}
		\dE'' &\leq \sum_{s=K}^{2K-1}{\dbinom ns \dbinom{s}{s-K, s-K, 2K-s}\left(\frac{K!}{|\mathrm{Aut}(\Gamma)|}\right)^2\left(\frac\lambda n\right)^{s-1}} \\
			&= \frac{n}{\lambda|\mathrm{Aut}(\Gamma)|^2}\sum_{s=K}^{2K-1}{\frac{n^{\underline s}\,\lambda^{s}}{n^s}\frac{K!^2}{(s-K)!^{\,2}(2K-s)!}}\\
			&= \frac{n}{\lambda|\mathrm{Aut}(\Gamma)|^2}\sum_{s=K}^{2K-1}{\lambda^{s}\frac{K!^2}{(s-K)!^{\,2}(2K-s)!}\left(1 + O\left(\frac{K^2}n\right)\right)}\\
			&\leq \frac{n\lambda^{K-1}(1 + o(1))}{|\mathrm{Aut}(\Gamma)|^2}\sum_{u=0}^{K-1}{\lambda^u\frac{K!^2}{u!^2(K-u)!}},
	\end{align*}
	where we made the change of variables $u = s-K$. Now, write
	\[ \frac{K!^2}{u!^2(K-u)!} = \dbinom K u \frac{K!}{u!} \leq \dbinom K u K^{K-u}, \]
	and we get 
	\begin{align*}
		\dE'' &\leq \frac{n\lambda^{K-1}K^K}{|\mathrm{Aut}(\Gamma)|^2}(1 + o(1))\sum_{u=0}^{K-1}{\dbinom K u\left(\frac\lambda K\right)^u} \\
		&\leq \frac{n\lambda^{K-1}K^K}{|\mathrm{Aut}(\Gamma)|^2}(1 + o(1))\left(1 + \frac\lambda K\right)^K \\
		&\leq \frac{n\lambda^{K-1}K^K e^{\lambda}}{|\mathrm{Aut}(\Gamma)|^2}(1 + o(1)) \\
		&= O\left(\dE_0(X_{\Gamma})^2\times \frac{K^K}{n\lambda^K}\right)
	\end{align*}

  When $K \leq \frac{\ln(n)}{\ln\ln(n)}$, we find that $\dE'' = o\left(\dE_0[X_{\Gamma}]^2\right)$, as requested. But $K = \frac{D^{h+1} - 1}{D-1} \leq \frac{\ln(n)}{\ln\ln(n)}$ whenever 
  \[ h \leq \underline h - \frac{\ln(\underline h)}{\ln(D)} + \frac{\ln\left(1 - \frac1D\right)}{\ln(D)}, \]

  which is the condition mentioned in Theorem \ref{thm:dtv_dary}.
	
\end{proof}

\subsection{Proof of Theorem \ref{thm:dary_norec}}

\begin{proof}
	For $0 \leq p \leq h$, let $L_{p}$ be the set of vertices at depth $p$ of $\Gamma$, and $T_p$ the set of vertices at depth $\leq p$.

	The strategy of proof is as follows : we aim to prove that there exists a universal constant $\delta$ such that given $G$ and 
	\[ \mathcal T := \sigma(T_{h-1}) \subset G, \]
	the location of the first $h-1$ rows of $\Gamma$, we have with high probability on $G$
	\begin{equation}\label{eq:cond_overlap}
		\dP_1\left((\ov(\hat \cK) \leq (1 - \delta)K\ \Big|\ G, \mathcal T \right) = 1 - o(1)
	\end{equation}
	In what follows, we will consider $\mathcal T$ to be fixed, and $G$ drawn under $\dP_1$.

	\medskip

	Let $\varepsilon > 0$ to be adapted later, and consider two cases :

	\begin{itemize}
		\item $|\hat{\cK}\cap \mathcal{T}| \leq (1-\varepsilon)|\mathcal{T}|$ : in this case, we easily get
		\begin{align*} 
			\ov(\hat{\cK}) &\leq D^h + (1-\varepsilon)\frac{D^h - 1}{D-1} \\
			&= K - \varepsilon\frac{D^h - 1}{D-1} \\
			&= K - \varepsilon\frac{K-1}D \\
			&= (1 - \frac\varepsilon D)K + o(K),
		\end{align*}
		from which equation (\ref{eq:cond_overlap}) follows since $\varepsilon$ is independent from $G$ and $\mathcal T$.

		\item if $|\hat{\cK}\cap \mathcal{T}| > (1-\varepsilon)|\mathcal{T}|$, we need the following lemma :
	\end{itemize}

	\begin{lemma}\label{hypergeo}
		Let $\sigma(L_{h-1}) = \{i_1, \ldots i_{D^{h-1}} \}$, and define $n_k = |\mathcal N(i_k)|$ and $m_k = |\hat{\cK}\cap \mathcal N(i_k)|$. Then
		\[ \dE_1\left(|\hat{\cK} \cap \sigma(L_h)|\ \Big\vert\ G, \mathcal{T}\right) = D\sum_k{\frac{m_k}{n_k}} \]
	\end{lemma}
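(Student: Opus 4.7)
The plan is to condition on $(G,\mathcal T)$, characterize the posterior law of $\sigma|_{L_h}$ via Bayes' rule, and then exploit an exchangeability of the elements of $\mathcal N(i_k)$ to compute the marginal probability that any given vertex of $G$ is a leaf image.

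First, I identify the posterior of $\sigma|_{L_h}$. Arguing exactly as in the proof of Lemma~\ref{lem:likelihood_ratio}, under $\dP_1$ the likelihood $\dP_0(G\mid\sigma)$ factors as $p^{e(G)-e(\Gamma)}(1-p)^{\binom n 2-e(G)}$ times the indicator that every attack edge lies in $G$, and is therefore a constant (in $\sigma$) on the set of $\sigma$ consistent with $G$. Conditioning further on $\mathcal T$, the only restriction placed on $\sigma|_{L_h}$ is that for every leaf $c\in L_h$ whose parent is mapped to $i_k$, one has $\sigma(c)\in\mathcal N(i_k)$. Hence the conditional distribution of $\sigma|_{L_h}$ given $(G,\mathcal T)$ is uniform on the set $\Sigma$ of injective maps $\phi:L_h\to[n]\setminus \mathcal T$ which satisfy this adjacency constraint at every parent.

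Next, by linearity of expectation and the observation that each vertex $v$ is the image of at most one leaf — necessarily a child of a unique $i_k$ — I write
\begin{equation*}
\dE_1\bigl(|\hat\cK\cap\sigma(L_h)|\,\big|\,G,\mathcal T\bigr) \;=\; \sum_{k=1}^{D^{h-1}}\sum_{v\in\hat\cK}q_k(v),\qquad q_k(v):=\dP_1\bigl(v\text{ is a child of }i_k\,\big|\,G,\mathcal T\bigr).
\end{equation*}
It remains to show $q_k(v)=D/n_k$ for every $v\in\mathcal N(i_k)$ (and $0$ otherwise). For this, I use the involution on $\Sigma$ that swaps the values $v,v'\in\mathcal N(i_k)$ in any assignment $\phi$: swapping preserves injectivity and, at the parent $i_k$, the adjacency requirement, since both $v,v'\in\mathcal N(i_k)$. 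Uniformity of the posterior then forces $q_k(v)=q_k(v')$, and since $\sum_{v\in\mathcal N(i_k)}q_k(v)=D$ (the expected number of children of $i_k$ is exactly $D$), I conclude $q_k(v)=D/n_k$. Plugging back in yields the announced formula.

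The main obstacle is the legitimacy of the swap in the last step: if some vertex $v$ lies in $\mathcal N(i_k)\cap\mathcal N(i_j)$ with $j\ne k$ and is assigned as a child of $i_j$ in $\phi$, then swapping it with $v'\notin\mathcal N(i_j)$ produces a $\phi'$ violating the adjacency constraint at $i_j$, so the involution may fail to map $\Sigma$ into itself. I would handle this either by first conditioning on the equivalence classes of $\mathcal N(i_k)$ under the relation ``have the same adjacencies to all other $i_j$'s'' and running the exchangeability argument inside each class, or, more simply, by restricting to the high-probability event (under $\dP_1$ with $G$ sparse and $K=o(n)$) that the neighbourhoods $\mathcal N(i_k)$ are pairwise disjoint, on which the involution is automatically valid and the identity becomes exact up to a $1-o(1)$ factor fully compatible with the subsequent use of the lemma in Theorem~\ref{thm:dary_norec}.
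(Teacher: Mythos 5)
Your proof is correct and takes essentially the same approach as the paper: the paper likewise observes that, conditional on $(G,\mathcal T)$, all $D$-ary trees in $G$ consistent with $\mathcal T$ are equally likely, so $N_k=|\hat\cK\cap\cK\cap\mathcal N(i_k)|$ is hypergeometric$(n_k,D,m_k)$ with mean $Dm_k/n_k$, and it too restricts to the high-probability event that the $\mathcal N(i_k)$ are pairwise disjoint before summing. The only cosmetic difference is that you derive the per-vertex inclusion probability $D/n_k$ by an explicit swap/exchangeability argument and then apply linearity of expectation, whereas the paper invokes the hypergeometric law directly.
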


	\begin{proof}(of lemma \ref{hypergeo}).
		Given $\mathcal T$, all vertices that are neighbours of a vertex in $\sigma(L_{h-1})$ are equally likely to belong to $\Gamma$, since all $D$-ary trees in $G$ have the same probability of generating $G$.

		Therefore, given $G,\  \sigma(L_{h-1}) = \{i_1, \ldots i_{D^{h-1}} \}$, the random variable $N_k = |\hat{\cK} \cap \cK \cap \mathcal N(i_k)|$ follows a hypergeometric law of parameters $(n_k, D, m_k)$. If follows that
		\[ \dE_1(N_k) = D\frac{m_k}{n_k} \]

		Now, with high probability the neighbourhoods $\mathcal N(i_k)$ are disjoint and the variables $N_k$ are thus independent. Since $|\hat{\cK} \cap \sigma(L_h)| = \sum_k N_k$ whenever the $\mathcal N(i_k)$ are disjoint, the lemma follows.
	\end{proof}

	We can now prove our main theorem : notice that $|\mathcal N(i_k)| \sim D + \Poi(\lambda)$ since $K = o(n)$, so w.h.p a proportion $\alpha$ (for a universal constant $\alpha$) of the $i_k$ are such that $|\mathcal N(i_k)| \geq D+1$. Moreover,

	\[ S := \sum_k{m_k} = K - |\hat{\cK}\cap \mathcal{T}| < D^h + \varepsilon|\mathcal{T}| = (1+\frac\varepsilon{D-1})D^{h} + o(D^h) \]

	Thus, $S \leq (1+ \varepsilon')D^h$ for some $\varepsilon' > 0$.

	Let $I_1$ be the set of indices such that $n_k = D$ ; we have
	\begin{align*} 
		\sum_k{\frac{m_k}{n_k}} &= \sum_{k\in I_1}\frac{m_k}{n_k} + \sum_{k\notin I_1}\frac{m_k}{n_k}	\\
		&\leq \sum_{k\in I_1}\frac{m_k}D + \sum_{k\notin I_1}\frac{m_k}{D+1}
	\end{align*}

	Let $S_1 = \sum_{k\in I_1}m_k$ ; we know that
	\[ S_1 \leq D|I_1| \leq D(1-\alpha)D^{h-1}, \]
since $m_k \leq n_k = D$ on $I_1$, which yields
	\begin{align*}
		\sum_k{\frac{m_k}{n_k}} &\leq \frac{S_1}D + \frac{S - S_1}{D+1} \\
		&= \frac S{D+1} + \frac{S_1}{D(D+1)} \\
		&\leq D^{h-1}\left((1+\varepsilon)\frac{D}{D+1} + (1 - \alpha)\frac{1}{D+1}\right)\\
		&\leq D^{h-1}\left(1 - \frac{\alpha - D\varepsilon}{D+1}\right)
	\end{align*}

	Choosing $\varepsilon$ such that $\alpha - D\varepsilon > 0$, we eventually find
	\begin{equation}
		\dE_1\left(|\hat{\cK} \cap L_h|\ \Big\vert\ G, \mathcal{T}\right) \leq (1-\gamma)D^h
	\end{equation}
	for some $\gamma > 0$.
	
	Finally, we can bound $\hat{\cK} \cap \cK$ :
	\begin{align*}
		\dE_1\left(|\hat\cK\cap \cK|\ \Big|\  G, \mathcal T\right) &\leq |\mathcal{T}| + \dE_1\left(|\hat{\cK} \cap L_h|\ \Big|\  G, \mathcal T\right) \\
		&\leq (1 - \gamma)D^h + |\mathcal{T}| \\
		&\leq K - \gamma D^h + o(D^h) \\
		&\leq (1 - \gamma\frac{D-1}D)K + o(K),
	\end{align*}
	which completes the proof of Theorem \ref{thm:dary_norec}.
\end{proof}

\end{document}